\documentclass[11pt,twoside]{preprint}
\usepackage[a4paper,innermargin=1.2in,outermargin=1.2in,
bottom=1.5in,marginparwidth=1in,marginparsep
=3mm]{geometry}
\usepackage{amsmath,amsthm,amssymb,enumerate}
\usepackage{hyperref}
\usepackage{mathrsfs}
\usepackage{breakurl}
\usepackage[nameinlink,capitalize]{cleveref}
\usepackage[title]{appendix}

\usepackage{newtxtext}

\usepackage{xcolor}
\usepackage{mhequ}
\usepackage{comment}
\usepackage{pifont}

\definecolor{Brown}{rgb}{.75,.5,.25}
\definecolor{DGreen}{rgb}{0,0.55,0}
\definecolor{Olive}{rgb}{0.41,0.55,0.13}

\newtheorem{theorem}{Theorem}[subsection]
\newtheorem{lemma}[theorem]{Lemma}
\newtheorem{proposition}[theorem]{Proposition}
\newtheorem{corollary}[theorem]{Corollary}

\theoremstyle{definition}

\newtheorem{definition}[theorem]{Definition}

\theoremstyle{remark}
\newtheorem{remark}[theorem]{Remark}

\def\scal#1{\langle #1 \rangle}

\newcommand{\vn}[1]{{\vert\kern-0.23ex\vert\kern-0.23ex\vert #1 
    \vert\kern-0.23ex\vert\kern-0.23ex\vert}}
\usepackage{mathtools}

\allowdisplaybreaks

\newcommand{\A}{\mathcal{A}}
\newcommand{\C}{\mathcal{C}}
\newcommand{\F}{\mathcal{F}}
\newcommand{\bF}{\mathbb{F}}

\newcommand{\bE}{\mathbb{E}}
\newcommand{\bP}{\mathbb{P}}

\newcommand{\cE}{\mathcal{E}}

\newcommand{\cS}{\mathcal{S}}
\newcommand{\cB}{\mathcal{B}}

\newcommand{\N}{\mathbb{N}}
\newcommand{\R}{\mathbb{R}}
\newcommand{\Z}{\mathbb{Z}}
\newcommand{\T}{\mathbb{T}}
\newcommand{\bB}{\mathbb{B}}

\def\cG{\mathcal{G}}
\newcommand{\eps}{\varepsilon}
\def\E{\hskip.15ex\mathbb{E}\hskip.10ex}

\renewcommand{\d}{\partial}

\newcommand{\id}{{\mathrm{id}}}

\newcommand{\Binom}{{\mathrm{Binom}}}

\newcommand{\D}{\partial}
\newcommand{\cP}{\mathcal{P}}

\newcommand{\cL}{\mathcal{L}}
\newcommand{\cF}{\mathcal{F}}
\newcommand{\cA}{\mathcal{A}}
\newcommand{\cC}{\mathcal{C}}
\newcommand{\scC}{\mathscr{C}}
\newcommand{\bone}{\mathbf{1}}

\def\({\left(}
\def\){\right)}

\setcounter{tocdepth}{2} 
\begin{document}
\title{Optimal rate of convergence for approximations of SPDEs with non-regular drift}
\author{Oleg Butkovsky\thanks{Weierstrass Institute, Mohrenstra\ss e 39, 10117 Berlin, Germany$\qquad$\url{oleg.butkovskiy@gmail.com}}\,,
Konstantinos Dareiotis\thanks{University of Leeds, Woodhouse, LS2 9JT Leeds, United Kingdom$\qquad$\url{k.dareiotis@leeds.ac.uk}}, and M\'at\'e Gerencs\'er\thanks{TU Wien, Wiedner Hauptstra\ss e 8-10, 1040 Vienna, Austria$\qquad$\url{mate.gerencser@tuwien.ac.at}}}
\maketitle
\begin{abstract}
A fully discrete finite difference scheme for stochastic reaction-diffusion equations driven by a $1+1$-dimensional white noise is studied. The optimal strong rate of convergence is proved without posing any regularity assumption on the non-linear reaction term. The proof relies on stochastic sewing techniques.
\end{abstract}

\bigskip

\bigskip
\tableofcontents

\section{Introduction}
Consider the stochastic partial differential equation (SPDE)

\begin{equ}\label{eq:main in classical form}
\d_t u=\Delta u+b(u)+\xi\qquad \text{on } (0,\infty)\times \T,\qquad\qquad u_0=\psi\qquad\text{on } \T.
\end{equ}
Here the unknown $u$ is a random space-time stochastic process in $1+1$ dimensions, $\xi$ is a space-time white noise, and $b:\R\to\R$ is a given function.
The spatial domain is the $1$-dimensional torus $\T=\R/\Z$, in other words, we consider the equation with periodic boundary conditions.
Owing to the regularising property of the noise, equation \eqref{eq:main in classical form} is well-posed even with merely bounded and measurable $b$,
as classical results of Gy\"ongy and Pardoux \cite{GyP1,GyP2} show.
For a far-reaching generalisation of these results we refer to the recent work \cite{ABLM}.

The error analysis of stochastic reaction-diffusion equations of the form \eqref{eq:main in classical form} with various regularity assumptions on the drift $b$ goes back to the early days of numerical analysis of SPDEs.
In what was the first study of a fully discrete numerical scheme for SPDEs, Gy\"ongy \cite{Gy} showed\footnote{\cite{Gy} considers \eqref{eq:main in classical form} with Dirichlet boundary conditions instead of periodic.} that the space-time finite difference approximation of the above equation (A) strongly converges to the true solution if $b$ is a bounded measurable function (B) converges with strong rate $1/4$ w.r.t. time and $1/2$ w.r.t. space if $b$ is a Lipschitz continuous function.
This rate was in fact shown to be sharp by Davie and Gaines \cite{Sandy-SPDE},
who proved matching lower bounds even in the linear case $b\equiv 0$.
Despite a rapidly growing literature on the numerics of SPDEs in the two decades since, the ``gap'' between (A) and (B) has remained
and no rate of convergence has been known even if $b$ is just shy of Lipschitz: say, $b\in\cC^\alpha$ with $\alpha<1$.

The aim of this paper is to resolve this question and derive the  optimal rate of convergence (up to loss of arbitrarily small $\eps$)  without any regularity assumption on $b$. The main result can be informally summarized as follows.
For the precise statement we refer to Theorem \ref{thm:main-theorem}.
\begin{theorem}\label{thm:informal}
For any $\eps\in(0,1/2)$, bounded and measurable $b$, and any initial condition of class $\cC^{1/2-\eps}(\T)$, the forward Euler finite difference approximation of \eqref{eq:main in classical form} converges strongly with rate $1/4-\eps/2$ w.r.t. time and $1/2-\eps$ w.r.t. space.
\end{theorem}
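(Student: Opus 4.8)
The plan is to pass to the mild (Duhamel) formulation and isolate the drift contribution, which is the only term genuinely sensitive to the irregularity of $b$. Writing $P_t=e^{t\Delta}$ for the heat semigroup on $\T$ and $Z_t=\int_0^t P_{t-s}\,d\xi_s$ for the stochastic convolution, the true solution satisfies $u_t=P_t\psi+\int_0^t P_{t-s}b(u_s)\,ds+Z_t$, while the forward Euler finite difference scheme admits an analogous discrete Duhamel representation $\bar u_t=P^h_t\psi+\int_0^t P^h_{t-s}b(\bar u_s)\,ds+Z^h_t$ driven by the discrete heat semigroup $P^h$ and a discretised stochastic convolution $Z^h$. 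The error then splits into a purely linear part, $(P_t-P^h_t)\psi+(Z_t-Z^h_t)$, and a drift part. The linear part is handled by deterministic estimates comparing $P$ with $P^h$ together with the known space-time regularity of $Z$, namely $\cC^{1/2-\eps}$ in space and $\cC^{1/4-\eps}$ in time. Since the target rates coincide exactly with this regularity, this step is consistent with the Davie--Gaines lower bound and uses only the assumed $\cC^{1/2-\eps}$ regularity of $\psi$.

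The crux is the drift part, $\int_0^t\big(P_{t-s}b(u_s)-P^h_{t-s}b(\bar u_s)\big)\,ds$. Since $b$ is merely bounded and measurable, the difference $b(u_s)-b(\bar u_s)$ cannot be controlled pointwise by $u_s-\bar u_s$, and any Lipschitz/Gronwall strategy fails outright. Instead I would estimate this term by the stochastic sewing lemma, treating
\[
A_{s,t}=\int_s^t P_{t-r}\big(b(u_r)-b(\bar u_r)\big)\,dr
\]
as the germ of a sewing problem. The two required inputs are bounds on the conditional expectation $\E[A_{s,t}\mid\F_s]$ and on the conditional $p$-th moment of $A_{s,t}-\E[A_{s,t}\mid\F_s]$. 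The gain over the naive bound comes from regularisation by noise: conditionally on $\F_s$, both $u_r$ and $\bar u_r$ for $r>s$ are Gaussian fields (after removing the bounded drift by Girsanov) centred at $P_{r-s}u_s$ and $P^h_{r-s}\bar u_s$, with nondegenerate one-point densities whose conditional variance scales like $(r-s)^{1/2}$. Differentiating the Gaussian density yields a quantitative smoothing estimate of the schematic form $\big\|\E[b(u_r)-b(\bar u_r)\mid\F_s]\big\|\lesssim \|b\|_\infty(r-s)^{-1/4}\,\|u_s-\bar u_s\|$, converting the irregularity of $b$ into an integrable singularity while retaining a factor of the error itself.

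Feeding these bounds into stochastic sewing produces a control of the drift part in terms of the supremum-in-time of an $L_p(\Omega)$ norm of the error in a suitable spatial Hölder space. Because this bound is again proportional to the error, I would close the argument by a buckling/Gronwall step in that norm, which propagates the optimal linear-part rate through the drift term and yields the claimed rates $1/4-\eps/2$ in time and $1/2-\eps$ in space.

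The main obstacle I anticipate is the regularisation estimate itself, and in particular its \emph{uniformity in the mesh}. In the continuous setting such Gaussian density bounds are classical, but here one must show that the discretised stochastic convolution $Z^h$ is nondegenerate with heat-kernel-type density bounds uniformly down to the spatial scale $h$, so that the smoothing exponent and the sewing estimates hold with constants independent of the discretisation. Establishing this uniform nondegeneracy, and matching the discrete and continuous Gaussian structures finely enough to apply stochastic sewing without losing the optimal exponent, is where the real work lies.
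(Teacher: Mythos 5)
Your architecture --- mild formulation, isolating the drift term, stochastic sewing with Gaussian conditional smoothing, and a final buckling step --- is genuinely close to the paper's strategy, and the difficulty you anticipate (nondegeneracy of the discrete stochastic convolution uniformly in the mesh) is real: the paper handles it by proving $Q^n(r)\gtrsim \sqrt{r}$ for $r\geq h$ and comparing $Q^n$ with $Q$ (Lemmas \ref{lem:Q-difference} and \ref{lem:bounds-Q-n}). However, there is a genuine gap at your closing step, and it is precisely the point the paper singles out in Remark \ref{rem:buckling-remark}: the argument cannot be closed by buckling in the supremum-in-time norm of the error $u-\bar u$, nor in any H\"older-in-time norm of $u-\bar u$.

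Here is why the step fails as proposed. Stochastic sewing needs two inputs on the germ $A_{s,t}$: the bound \eqref{eq:SSL-cond1} with exponent $>1/2$ and the bound \eqref{eq:SSL-cond2} on $\E^s\delta A_{s,u,t}$ with exponent \emph{strictly greater than} $1$. Your one-point smoothing estimate $\|\E[b(u_r)-b(\bar u_r)\mid\F_s]\|\lesssim\|b\|_\infty (r-s)^{-1/4}\|u_s-\bar u_s\|$ integrates to exponent $3/4$, which is enough for \eqref{eq:SSL-cond1} but not for \eqref{eq:SSL-cond2}. To verify \eqref{eq:SSL-cond2} one must exploit the cancellation inside $\E^s\delta A_{s,u,t}$, and there the error-carrying factor is a double difference of conditional drift contributions (the paper's $\phi_{s,r}-\phi^n_{s,r}-\phi_{u,r}+\phi^n_{u,r}$), which is controlled by the two-parameter drift-error process $\cE^n_{s,r}$ of \eqref{eq:main-error-guy}, not by $\sup_t\|u_t-\bar u_t\|$; moreover, to push the total exponent above $1$ this quantity must come with a positive power of $(r-s)$, i.e.\ one needs $[\cE^n]_{\scC^\tau_p}$ with $\tau>1/4$ on the right-hand side. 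A sup-in-time norm carries no power of $(r-s)$ at all, so sewing cannot be run with it; and replacing it by a H\"older-in-time norm of the full error also fails, because $u-\bar u$ contains $Z-Z^h$, whose time regularity in $L_p(\Omega)$ is capped at $1/4$ (each of $Z$, $Z^h$ is only $1/4$-H\"older in time, so $[Z-Z^h]_{\scC^\tau_p}$ with $\tau>1/4$ is not small, nor even bounded uniformly in the mesh). This is exactly where the finite-dimensional intuition breaks: for SDEs the noise cancels identically in $X-X^n$, so one may buckle in $\cC^{1/2}([0,1];L_p)$ of the error, but here the continuous and discrete noises do not cancel. The paper's resolution is to buckle not on $u-\bar u$ but on the drift-error functional $\cE^n$ itself, in the seminorm $[\,\cdot\,]_{\scC^{3/8}_p}$: drift integrals have time regularity well above $1/4$, so $3/8$ is simultaneously achievable as output of the sewing estimates and sufficient as input. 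Two further points you would need to repair: a single Girsanov transform cannot simultaneously remove the two \emph{different} drifts $b(u)$ and $b(\bar u_{\kappa_n(\cdot)})$, so for the mixed term the paper replaces the drift contributions by their $\F_s$-conditional expectations instead, reserving Girsanov for estimates involving $u^n$ alone; and your discrete Duhamel formula omits that the Euler scheme evaluates $b$ at grid times with the kernel $p^n_{\kappa_n(t-s)}$, producing quadrature and kernel-replacement terms ($\cE^{n,2}$, $\cE^{n,3}$) that require their own sewing and deterministic estimates --- though these only produce pure rates, so they are an incompleteness rather than an obstruction.
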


The strategy of the proof is quite different from previous works.
In  \cite{Gy}, the method for  the bounded $b$ case
crucially relies on the Gy\"ongy-Krylov lemma \cite[Lem. 1.1]{GyK} and therefore is inherently not quantitative.
As for methods in the Lipschitz (or one-sided Lipschitz) $b$ case (see below for some references), they build on the analysis of the corresponding deterministic problem.
Such approach is out of question for $b \in \C^\alpha$, $\alpha<1$, since without the noise the PDE is not even well-posed, in general.
Instead, our strategy uses stochastic sewing, initiated in \cite{Khoa} and further 
developed in the numerical analytic direction in \cite{BDG,KMK}.

\subsection{Literature}
As mentioned above, quantitative results have so far remained out of reach when $b$ is even slightly irregular, i.e. not at least one-sided Lipschitz.
Qualitative results, further to the works of Gy\"ongy \cite{Gy-spatial,Gy},
were obtained in the case of bounded measurable $b$ in Pettersson and Signahl \cite{Pettersson2005} (convergence in the nondegenerate multiplicative case)
and in Anton, Cohen, and Quer-Sardanyons \cite{Cohen} (convergence for an exponential integrator scheme).
Needless to say, in the case of regular coefficients the rate of convergence of various discretisations of SPDEs is extensively studied.
Even just in the context of space-time white noise driven reaction-diffusion equations the literature is rich, see among others \cite{+B,+B2,+D,+J,+L,+P,+S,+W}.
A wider overview can be found for example in the above mentioned work \cite[Sec. 1]{Cohen} or in Da Prato-Zabczyk \cite[Sec. 14.1.10]{DPZ}. The interested reader is also referred to the monographs \cite{Jentzen-book, Kruse-book}.

In contrast to SPDEs,  which can be seen as infinite dimensional SDEs, the question of rate of convergence for finite dimensional SDEs with irregular drift coefficient is far more well-studied.
As a small sample, we mention some of the most recent works \cite{BDG, NeuSz, MY-lowerbound, Menozzi21, Taguchi, Y-order1, LL}.
The developments of the last years are discussed in more detail in the survey \cite{survey}.
However, we mention that even in the finite dimensional case, the optimal strong convergence rate without any regularity assumptions has only been proved quite recently \cite{KMK}.

\subsection{Notation}

For a metric space $(X,d)$  we define the following spaces of $\R$-valued functions. The space of bounded and Borel-measurable functions is denoted by $\bB(X)$ and is equipped with the norm $\|f\|_{\bB(X)}=\sup_x|f(x)|$.
The space of continuous functions is denoted by $\mathcal{C}(X)$.
For $\alpha\in(0,1]$ we denote by $\cC^\alpha(X)$ the space of bounded functions $f$ that satisfy
\begin{equ}
\,[f]_{\cC^\alpha(X)}:=\sup_{x\neq y}\frac{|f(x)-f(y)|}{d(x,y)^\alpha}<\infty.
\end{equ}
We equip $\cC^\alpha(X)$ with the norm $\|f\|_{\cC^\alpha(X)}=\|f\|_{\bB(X)}+[f]_{\cC^\alpha(X)}$.
By convention, we set $\cC^0(X):=\bB(X)$ (and not $\cC(X)!$).

We fix a probability space $(\Omega,\cF,\bP)$. The white noise $\xi$ on $[0,\infty)\times\T$ is a mapping from $\cB_b([0,\infty)\times \T)$, the bounded Borel sets of $[0,\infty )\times \T$, to $L_2(\Omega)$ such that for any collection $A_1,\ldots,A_k$ of elements of $\cB_b([0,\infty)\times\T)$, the vector $\big(\xi(A_1),\ldots,\xi(A_k)\big)$ is Gaussian with mean $0$ and covariance $\E\big(\xi(A_i)\xi(A_j)\big)=|A_i\cap A_j|$, where $|\cdot|$ denotes the Lebesgue measure. 
We  also fix a filtration $\bF=(\cF_t)_{t\in[0,1]}$ such that for each $t\geq 0$, $A\in\cB_b([0,t]\times\T)$, and $B\in\cB_b([t,\infty)\times\T)$, the random variable $\xi(A)$ is $\cF_t$-measurable, $\xi(B)$ is independent of $\cF_t$, and $\cF_t$ is $\bP$-complete.
For example, we may (but don't necessarily have to) take $\bF$ to be the completion of the filtration generated by $\xi$. The predictable $\sigma$-algebra on $\Omega \times [0,1]$ is denoted by $\mathscr{P}$. 
The conditional expectation given $\cF_t$ is denoted by $\E^t$.
The space-time stochastic integrals with respect to $\xi$ are denoted by
\begin{equ}
\int_0^t\int_\T f(s,y)\,\xi (dy, ds) =\int_0^1\int_\T\bone_{s\in[0,t]}f(s,y)\,\xi (dy, ds) .
\end{equ}
Most of the time the integrand $f$ will be deterministic, in which case the stochastic integral can simply be defined as the continuous and linear extension of the mapping $\bone_{A}\mapsto\xi(A)$ from $L_2([0,1]\times\T)$ to $L_2(\Omega)$, which is in fact an isometry.
More generally,   we might consider $\mathscr{P} \otimes \mathcal{B}(\T)$-measurable  integrands $f : \Omega\times  [0, 1] \times \T \to \R$, with $f \in  L_2(\Omega\times  [0, 1] \times \T)$.   Their stochastic integration can be found in e.g. \cite{DPZ}.

We denote the convolution operator by
\begin{equ}
(f\ast g)(x)=\int f(x-y)g(y)\,dy.
\end{equ}
This notation is used both when the domain of integration is $\R$ and $\T$. Since in a typical situation $f$ will be a heat kernel either on $\R$ or $\T$, the context will make it clear which convolution we mean.

In proofs of theorems/lemmas/propositions we
use the shorthand $f\lesssim g$ to mean that there exists a constant $N$ such that $f\leq N g$, and that $N$ does
not depend on any other parameters than the ones specified in the theorem/lemma/proposition.  Moreover, when we explicitly write $f \leq Ng$,  again the constant $N$ depends only on the parameters stated in the corresponding theorem/lemma/proposition and might change from line to line. 

\subsection{Formulation}

We consider the finite difference, forward Euler approximation of \eqref{eq:main in classical form}.
To this end, we introduce the space and time grids, for each $n\in\N=\{1,2,3,\ldots\}$
\begin{equs}
\Pi_n&=\{0,(2n)^{-1},\ldots,(2n-1)(2n)^{-1}\},\qquad
\Lambda_n= \{0, c (2n)^{-2}, 2c(2n)^{-2},\ldots\},
\end{equs}
where $c$ is a constant satisfying the condition $c\in(0,1/2)$, also  commonly known as the Courant–Friedrichs–Lewy (CFL) condition in the present context.
\begin{remark}
The restriction to look at spatial grids with even number of points (i.e. the choice of $2n$) is purely for convenience, otherwise the even and odd cases would require some notational distinction later on. The choice of focusing on the even case is motivated by the computational practice of using nested grids of mesh sizes $2^{-k}$, $k=1,2,\ldots, N$, up to some threshold $N$.
\end{remark}
Note also that on $\Pi_n$, just like on $\T$,  the addition is understood in a periodic way, i.e. $(2n-1) (2n)^{-1}+(2n)^{-1}$ is identified with $0$.
To ease notation, we also denote $h=c(2n)^{-2}$. Hence, by setting $\N_0=\N\cup\{0\}$, one has $\Lambda_n=h\N_0$.
Take an approximate initial condition $\psi^n : \Omega \times  \T \to \R$.
The approximation scheme is defined by setting
$u^n_0( x)=\psi^n( x)$ for $ x\in\Pi_n$ and
then inductively
\begin{equs}
u^n_{{t}+h}( {x}) &=u^n_{{t}}( {x})+h \Delta_n u^n_{ t}( x)
+h b(u^n_{ t}({x}))+h  \eta_n ({t}, {x}) \label{eq:approx classical form}
\end{equs}
for $ t\in\Lambda_n$ and $ x\in\Pi_n$, where
the discrete Laplacian is defined as
\begin{equ}
\Delta_n f( x)=(2n)^2\big(f( x+(2n)^{-1})-2f( x)+f( x-(2n)^{-1})\big)\,,
\end{equ}
and the discrete noise term is given by
\begin{equ}
\eta_n ({t}, {x})=2n h^{-1}\xi\Big([ t, t+h]\times[ x, x+(2n)^{-1}]\Big).
\end{equ}
Recall that \eqref{eq:main in classical form} admits a unique mild solution (see  Definition \ref{def:mild} below) which will be denoted by $u$.  The main result of the article reads as follows.
\begin{theorem}  \label{thm:main-theorem}
Let $p\geq 2$, $\eps \in (0, 1/4)$, and let $b$ be bounded and measurable. Assume that  the initial conditions $\psi,\psi^n$ are $\cF_0$-measurable $\cC^{1/2-\eps}(\T)$-valued random variables,  such that for a constant $K<\infty$ they  satisfy $\|\psi\|_{L_p(\Omega; \cC^{1/2-\eps}(\T))},\|\psi^n\|_{L_p(\Omega; \cC^{1/2-\eps}(\Pi_n))}\leq K$.
Then there exists a constant $N$ depending only on the parameters $c,p,\eps,K,\|b\|_{\bB(\R)}$ such that for all $n\in\N$ the following bound holds:
\begin{equ}\label{eq:main-estimate}
\sup_{(t,x)\in([0,1]\cap\Lambda_n)\times\Pi_n}\|u_t(x)-u^n_t(x)\|_{L_p(\Omega)}\leq N \big(n^{-1/2+\eps}+\sup_{x\in\T}\|\psi(x)-\psi^n(x)\|_{L_p(\Omega)}\big).
\end{equ}
\end{theorem}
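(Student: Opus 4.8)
The plan is to compare the mild formulations of $u$ and $u^n$. Writing $P_t=e^{t\Delta}$ for the heat semigroup on $\T$ and $P^n$ for the semigroup generated by $\Delta_n$ on $\Pi_n$, both solutions admit representations as the sum of a free evolution of the initial datum, a stochastic convolution against $\xi$, and a drift term $\int_0^t P_{t-s}b(u_s)\,ds$ (respectively its fully discrete analogue with $P^n$ and with $b$ evaluated at the frozen grid time). First I would set $v_t(x)=u_t(x)-u^n_t(x)$ and split it into an initial-condition error, a stochastic-convolution error, a kernel-mismatch error, and a genuine drift difference. The first three contributions do not involve the irregularity of $b$: they are controlled by now-standard estimates quantifying the convergence of $P^n$ to $P$ (in particular $\|(P_t-P^n_t)f\|$ and discrete-versus-continuous heat-kernel bounds) together with the Gaussian nature and the space-time regularity of the stochastic convolution, and they produce the rate on the right-hand side of \eqref{eq:main-estimate}.

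The main obstacle is the drift difference, schematically $\int_0^t P_{t-s}\big(b(u_s)-b(u^n_s)\big)\,ds$ together with the time-freezing error $b(u^n_s)-b(u^n_{\kappa_n(s)})$, where $\kappa_n(s)$ is the grid time below $s$. Since $b$ is only bounded and measurable, neither $\|b(u_s)-b(u^n_s)\|$ nor $\|b(u^n_s)-b(u^n_{\kappa_n(s)})\|$ can be bounded pointwise in terms of $\|v_s\|$ or of $|s-\kappa_n(s)|$. The remedy, which is the heart of the argument, is the regularising effect of the noise. Conditionally on $\cF_s$, the value $u_r(x)$ equals $(P_{r-s}u_s)(x)$ plus the stochastic convolution over $[s,r]$ plus a lower-order drift remainder; the stochastic convolution is Gaussian with conditional standard deviation of order $(r-s)^{1/4}$. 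Hence $\E^s b(u_r(x))$ is $b$ mollified at spatial scale $(r-s)^{1/4}$, so that differences of such conditional averages gain a factor $(r-s)^{-1/4}$ against the pointwise discrepancy of the conditional means. I would formalise this as a quantitative conditional-density (regularisation) lemma, valid simultaneously for $u$ and, crucially, for the fully discrete $u^n$ with constants independent of $n$.

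With this lemma in hand I would apply the stochastic sewing lemma to the drift difference. For fixed $x$, consider the two-parameter object $A_{s,t}=\E^s\int_s^t P_{t-r}\big(b(u_r)-b(u^n_r)\big)(x)\,dr$. By the tower property, the conditional-expectation increment $\E^s[\delta A_{s,u,t}]$ is, up to heat-kernel-regularity remainders, degenerate, so the first condition of the stochastic sewing lemma (time exponent larger than $1$) holds; meanwhile the regularisation estimate bounds the plain increment by $\|\delta A_{s,u,t}\|_{L_p}\lesssim (t-s)^{3/4}\sup_{r\in[s,t]}\|v_r\|$, using $\int_s^t(r-s)^{-1/4}\,dr\lesssim (t-s)^{3/4}$, which verifies the second condition (exponent larger than $1/2$). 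The lemma then identifies the drift integral with the sewn process $\cA$ and yields a bound featuring the conditional-mean discrepancy $P_{r-s}v_s$ (dominated by $\sup_y\|v_s(y)\|$) and the time step $h\asymp n^{-2}$ with favourable exponents; the time-freezing term is handled by the identical mechanism, now exploiting the temporal regularity of $u^n$.

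Collecting all terms produces a closed estimate for $E(t):=\sup_{x\in\Pi_n}\|v_t(x)\|_{L_p}$ of the form $E(t)\lesssim n^{-1/2+\eps}+\sup_x\|\psi(x)-\psi^n(x)\|_{L_p}+NT^{3/4}\sup_{s\le t}E(s)$ on any interval of length $T$, from which the result follows by choosing $T$ small enough to absorb the last term and then iterating over finitely many such intervals covering $[0,1]$ (alternatively, via a singular Gronwall inequality). The hardest part will be the regularisation step: establishing the Gaussian-type conditional density bounds for the discrete scheme $u^n$ \emph{uniformly in $n$}, so that $u^n_r(x)$ enjoys the same $(r-s)^{1/4}$ conditional smoothing as $u_r(x)$, and verifying the two-parameter bounds with enough precision to feed the stochastic sewing lemma.
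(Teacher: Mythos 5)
Your proposal assembles the same ingredients as the paper---mild formulations, the four-way error decomposition, Gaussian conditional regularisation at scale $(r-s)^{1/4}$, stochastic sewing, and a short-interval buckling with iteration---but there is a genuine gap at the decisive step: the choice of the quantity you buckle on. You close the argument with $E(t)\lesssim n^{-1/2+\eps}+\sup_x\|\psi(x)-\psi^n(x)\|_{L_p}+NT^{3/4}\sup_{s\le t}E(s)$, where $E(t)=\sup_x\|u_t(x)-u^n_t(x)\|_{L_p}$, but this inequality cannot be produced by the sewing argument you sketch, because the conditional-increment condition \eqref{eq:SSL-cond2} fails. Concretely: with your germ $A_{s,t}=\E^s\int_s^t \cP_{t-r}\big(b(u_r)-b(u^n_r)\big)\,dr$, the tower property does \emph{not} make $\E^s\delta A_{s,u,t}$ negligible, since the kernel moves with $t$; what survives is $\E^s\int_s^u(\cP_{t-r}-\cP_{u-r})\big(b(u_r)-b(u^n_r)\big)\,dr$, and for $u=(s+t)/2$ even the optimal use of the conditional smoothing only gives an estimate of order $|t-s|^{3/4}$ (or, with kernel interpolation, barely $|t-s|$) times the error quantities---short of the exponent strictly greater than $1$ that Theorem \ref{thm:SSL} requires. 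The paper removes this obstruction by freezing the kernel at the terminal time ($\cP_{t'-r}$); but then, since $b$ is merely measurable, the substitution $u_r\mapsto O_r+\phi_{s,r}$ must be built into the \emph{definition} of $A_{s,t}$ (the discrepancy being handled afterwards by Lipschitz approximation, Girsanov and Fatou), and with this substitution $\E^s\delta A_{s,u,t}$ is no longer degenerate: it contains the second-order difference $\phi_{s,r}-\phi^n_{s,r}-\phi_{u,r}+\phi^n_{u,r}=(\E^s-\E^u)\,\cE^n_{s,r}$, where $\cE^n_{s,r}$ is the two-parameter drift-error functional \eqref{eq:main-error-guy}.

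This is exactly where your buckling quantity is insufficient. Verifying \eqref{eq:SSL-cond2} requires $\|\cE^n_{s,r}\|_{L_p}\lesssim C\,(r-s)^\tau$ with $\tau>1/4$ \emph{and} with $C$ small (of order $n^{-1/2+\eps}$ plus an unknown that can be absorbed); integrating $(r-u)^{-1/4}(r-s)^\tau$ then yields the admissible exponent $3/4+\tau>1$. If all you control is $\sup_{t,x}\|u_t(x)-u^n_t(x)\|_{L_p}$, both available bounds fail: the trivial estimate $\|\cE^n_{s,r}\|\le 2\|b\|_{\bB(\R)}(r-s)$ has the right exponent but a constant that does not decay in $n$, which destroys the rate after sewing; alternatively, writing $(\E^s-\E^u)\cE^n_{s,r}=(\E^s-\E^u)\big[(u_r-u^n_r)-(O_r-O^n_r)\big]$ and estimating through $u-u^n$ gives either a factor $\sup E$ with no gain in $(r-s)$ (total exponent $3/4$) or a factor $(r-s)^{1/4}$ with no decay in $n$ (total exponent exactly $1$), and no interpolation between the two reaches exponent $>1$ together with decay in $n$, precisely because $u-u^n$ contains $O-O^n$, which has no better than $1/4$ time regularity. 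This is the content of Remark \ref{rem:buckling-remark}: the paper buckles not on $u-u^n$ but on $\cE^n$ itself, measured in the H\"older-in-time seminorm $[\cE^n]_{\scC^{3/8}_p}$ (any $\tau\in(1/4,3/4)$ works), obtains $[\cE^n]_{\scC^{3/8}_p}\lesssim n^{-1/2+\eps}+\sup_x\|\psi(x)-\psi^n(x)\|_{L_p}$ by absorption on a short interval, and only at the very end reconstitutes $u-u^n$ by adding back the initial-condition and Ornstein-Uhlenbeck errors. Without introducing this seminorm (and the accompanying Girsanov reduction that yields the uniform-in-$n$ conditional Gaussian bounds for the discrete scheme, which you rightly flagged as the hard analytic input), the self-improving inequality you state is not obtainable from the argument you describe.
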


\begin{remark}
As will follow from the proof, with an appropriate extension of $u^n$ from the gridpoints $\Lambda_n\times\Pi_n$ to the whole of $\R_+\times\T$, the supremum on the left-hand side of \eqref{eq:main-estimate} can be taken over $(t,x)\in[0,1]\times\T$.
\end{remark}
\begin{remark}
The freedom of allowing different initial condition for the approximation is not a particularly important feature of the statement, but it is convenient for the proof. Indeed, it allows to easily deduce the general case from the case of short times, that is, when the supremum runs only over $t\leq S$, where $S$ is a (small) constant depending only on the parameters of the problem.
\end{remark}

\begin{remark}
There are several natural interesting directions  to generalise the results of the present article: for example, equations driven  by spatially coloured/multiplicative/L\'evy noises,  distributional drifts or even different approximation schemes. We leave these for future work.
\end{remark}

\bigskip

\noindent\textbf{Acknowledgments.} 
The authors thank the referees for numerous constructive suggestions.
MG was funded in part by the Austrian Science Fund (FWF) Stand-Alone programme P 34992. For the purpose of open access, the authors have applied a CC BY public copyright license to any Author Accepted Manuscript version arising from this submission.
We further thank the financial support of the International Office at TU Wien.

\section{Estimates on heat kernels and stochastic convolutions}
The evolution of the true and of the approximate solutions is very different even in the linear case $b=0$.
This is one of the main challenges compared to the finite dimensional case, where with vanishing drift the two processes are simply given by the noise process and in particular the error is $0$ (see Section \ref{sec:outline} for a bit more detailed comparison to the finite dimensional case). In infinite dimensions, 
the error of the linear problem propagates in a nontrivial way in the error analysis of the case of irregular $b$.
The aim of this section is therefore to derive various estimates for the continuous and discrete heat kernels and the associated Ornstein-Uhlenbeck processes (i.e. the solutions of \eqref{eq:main in classical form}, \eqref{eq:approx classical form} in the case $\psi=0$, $b=0$).
\subsection{Definitions}
We encounter three different heat kernels in the article: the continuum heat kernel on $\R$, the continuum heat kernel on $\T$, and the discrete heat kernel on $\T$.
The first two are defined by
\begin{equs}
p_t^\R(x)&=\frac{1}{\sqrt{2\pi}t}\exp\Big(-\frac{x^2}{2t}\Big),
\\
p_t(x)&=\sum_{k\in\Z}\frac{1}{\sqrt{4\pi}t}\exp\Big(-\frac{(x+k)^2}{4t}\Big)= \sum_{k \in \Z} e^{- 4 \pi^2 k^2 t } e^{i2\pi k x},
\end{equs}
the last equality obtained by Poisson summation.
The difference in scaling comes from the fact that the first is chosen to be the density function in $x$ of a centered normal random variable with variance $t$, while the latter is chosen to be the Green's function of the heat operator on $\R\times\T$.
For sake of convenience we use separate notation for the action of the heat kernels via convolution: for $f\in\bB(\T)$, we denote $\cP_tf:=p_t\ast f$. We define $\cP_t^\R$ analogously.
The continuum heat kernels form a semigroup, that is, $\cP_t(\cP_s f)=\cP_{t+s}f$, and similarly for $\cP^\R$.
The periodic heat kernel is used for the definition of the mild solution of \eqref{eq:main in classical form}.
\begin{definition}        \label{def:mild}
A mild solution of \eqref{eq:main in classical form} is  a  $\mathscr{P} \otimes \mathcal{B} (\T)$-measurable map $u : \Omega \times [0,1] \times \T \to \R$  which is  continuous in $(t,x)$, 
such that almost surely for all $(t,x)\in[0,1]\times \T$ the following equality holds:
\begin{equ}\label{eq:main in mild form}
u_t(x)=\cP_t \psi(x)+\int_0^t\cP_{t-s} b(u_s)(x)\,ds+\int_0^t\int_\T p_{t-s}(x-y)\,\xi (dy, ds) .
\end{equ}
\end{definition}

The setup of the discrete heat kernels is more complicated.
The formulation below follows along the lines of \cite{Gy}, but for the convenience of the reader and due to various small differences we prefer to give the full details.
From now on,  the conjugate of a complex number $z\in\mathbb{C}$ is denoted by $\overline{z}$. Consider the functions $e_j(x)= e^{i 2 \pi j x}$ for $j\in\Z$.  
They are eigenfunctions of $\Delta$ with eigenvalues $\lambda_j= -4 \pi^2j^2$.  It is  well-known that $(e_j)_{j \in \Z}$ forms an orthonormal basis of $L^2(\T; \mathbb{C})$.
In the next proposition we prove a discrete analogue.  It will be convenient to use the piecewise linear extension of the restriction of $e_j$ to $\Pi_n$: for $-n\leq j\leq n-1$, for $x\in \Pi_n$, and $x' \in [x, x+(2n)^{-1}]$, set
\begin{equs}      \label{eq:linear-interpolation}
e_j^n(x')= e_j(x)+2n(x'-x)(e_j(x+(2n)^{-1})-e_j(x)).
\end{equs}
\begin{proposition}
Let $\lambda^n_j = -16 n^2 \sin^2\big( \frac{j \pi}{2n}\big)$ for $j\in \Z$. Then
\begin{equs}    \label{eq:discrete-eigenstuff}
\Delta_n e_j(x) = \lambda^n_j e_j(x)
\end{equs}
for $x \in \Pi_n$. 
Moreover, if $-n\leq j,\ell\leq n-1$, then
\begin{equs}
\frac{1}{2n}\sum_{ x\in\Pi_n} e_j( x)  \overline{e_\ell( x)}  = \bone_{j=\ell}.\label{eq:orthogonal-n}
\end{equs}
As a consequence, $e_{-n}^n,e_{-n+1}^n,\ldots,e_{n-1}^n$, as functions on $\Pi_n$ form a basis of $L^2(\Pi_n; \mathbb{C})$.
\end{proposition}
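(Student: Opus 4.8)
The plan is to verify the three assertions in turn, each of which reduces to an elementary computation. First I would establish the eigenvalue relation \eqref{eq:discrete-eigenstuff} by direct substitution. Writing $e_j(x)=e^{i2\pi jx}$ and plugging into the definition of $\Delta_n$, the two shifted values factor as $e_j(x)$ times $e^{\pm i\pi j/n}$, so that
\[
\Delta_n e_j(x)=(2n)^2 e_j(x)\big(e^{i\pi j/n}-2+e^{-i\pi j/n}\big)=(2n)^2 e_j(x)\big(2\cos(\pi j/n)-2\big).
\]
Applying the identity $2\cos\theta-2=-4\sin^2(\theta/2)$ with $\theta=\pi j/n$ yields exactly $-16n^2\sin^2(j\pi/(2n))\,e_j(x)=\lambda_j^n e_j(x)$, which is \eqref{eq:discrete-eigenstuff}. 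This computation is valid for every $j\in\Z$, as claimed.

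Second, I would prove the discrete orthogonality \eqref{eq:orthogonal-n}. Enumerating $\Pi_n=\{m(2n)^{-1}:0\le m\le 2n-1\}$, the sum becomes a finite geometric series
\[
\frac{1}{2n}\sum_{x\in\Pi_n}e_j(x)\overline{e_\ell(x)}=\frac{1}{2n}\sum_{m=0}^{2n-1}\omega^m,\qquad \omega:=e^{i2\pi(j-\ell)/(2n)}.
\]
If $j=\ell$ then $\omega=1$ and every term equals $1$, giving the value $1$. If $j\ne\ell$, the constraint $-n\le j,\ell\le n-1$ forces $1\le|j-\ell|\le 2n-1$, so $j-\ell$ is not a multiple of $2n$ and hence $\omega\ne1$; since $\omega^{2n}=e^{i2\pi(j-\ell)}=1$, the geometric sum vanishes. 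This is the one place where the index range is used essentially, and checking that $j-\ell$ avoids nonzero multiples of $2n$ is the only point that requires a moment's care rather than pure calculation.

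Finally, for the basis assertion I would observe that the space $L^2(\Pi_n;\mathbb{C})$ of $\mathbb{C}$-valued functions on the $2n$-point set $\Pi_n$ has dimension $2n$. The interpolation formula \eqref{eq:linear-interpolation} gives $e_j^n(x)=e_j(x)$ for every grid point $x\in\Pi_n$ (take $x'=x$), so as functions on $\Pi_n$ the family $e_{-n}^n,\dots,e_{n-1}^n$ coincides with $e_{-n},\dots,e_{n-1}$. By \eqref{eq:orthogonal-n} these $2n$ functions are orthonormal for the normalised inner product $\langle f,g\rangle=(2n)^{-1}\sum_{x\in\Pi_n}f(x)\overline{g(x)}$, hence linearly independent, and a set of $2n$ linearly independent vectors in a $2n$-dimensional space is a basis. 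Since the entire argument rests on elementary trigonometric and geometric-sum identities, I do not anticipate any genuine obstacle; the only real care needed is the bookkeeping of the index range noted above and the identification of $e_j^n$ with $e_j$ on the grid.
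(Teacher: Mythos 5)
Your proof is correct and follows essentially the same route as the paper: direct substitution plus the identity $2\cos\theta-2=-4\sin^2(\theta/2)$ for the eigenvalue relation, and the finite geometric series (using $|j-\ell|<2n$ to ensure the ratio differs from $1$) for orthogonality. Your explicit treatment of the basis claim, which the paper leaves as an immediate consequence, is a harmless addition and matches the intended argument.
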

\begin{remark}
Of course $L^2(\Pi_n; \mathbb{C})$ can be simply identified with $\mathbb{C}^{2n}$, but keeping the former viewpoint is more instructive.
\end{remark}

\begin{proof}
We start with \eqref{eq:discrete-eigenstuff}. For $j \in \Z$   and $x = k/2n$, we have 
\begin{equs}
\Delta_n e_j(x)  =&  4n^2  \Big( \exp  \Big( \frac{i 2\pi j k  }{2n} + \frac{i 2 \pi j }{2n}\Big) -2 \exp  \Big( \frac{ i 2 \pi j k }{2n} \Big) + \exp  \Big( \frac{ i 2  \pi j k  }{2n}- \frac{i 2\pi j }{2n} \Big) \Big)
\\
= &  4 n^2 e_j(x) \Big( \exp  \Big(  \frac{i 2 \pi j }{2n}\Big) -2  + \exp  \Big( - \frac{i 2\pi j }{2n} \Big) \Big)
\\
=& 8  n^2 e_j(x) \Big( \cos \Big( \frac{ 2\pi j}{2n} \Big) -1\Big) = - 16   n^2 e_j(x)  \sin^2 \Big( \frac{ \pi j}{2n} \Big) = \lambda^n_j e_j(x) . 
\end{equs}
This proves \eqref{eq:discrete-eigenstuff}.
As for \eqref{eq:orthogonal-n}, the case $j=\ell$ is trivial. For $j\neq \ell$, using that $|j-\ell|<2n$, in the geometric series below the ratio $e^{\frac{i2\pi(j-\ell)}{2n}}$ is different from $1$, therefore
\begin{equ}
\sum_{ x\in\Pi_n} e_j( x)  \overline{e_\ell( x)}  = \sum_{k=0}^{2n-1} e^{\frac{i 2 \pi (j-\ell)k}{2n}}= \frac{1-e^{i 2\pi (j-\ell)}}{1- e^{\frac{i2\pi (j-\ell)}{2n}}}=0 .
\end{equ}
\end{proof}
Although \eqref{eq:discrete-eigenstuff} holds for all $j\in\Z$, in the sequel we will only ever consider $-n\leq j\leq n-1$.
Let us also briefly discuss how $\lambda_j^n$ relates to $\lambda_j$. Defining $\gamma_0^n=1$ and $\gamma_j^n=\frac{\lambda_j^n}{\lambda_j}=\frac{\sin^2(j\pi/2n)}{(j\pi/2n)^2}$, one has 
\begin{equs}    \label{eq:bound-gama^n_j}
4 \pi^{-2} \leq \gamma^n_j\leq 1.
\end{equs}
Indeed, it is elementary to see that $\frac{\sin^2(x)}{x^2}$ is even, decreasing on $[0,\pi]$, so its minimum on $[-\pi/2,\pi/2]$ equals $4\pi^{-2}$.
As a consequence,
\begin{equ}\label{eq:sunday}
\lambda^n_j\leq -16 j^2.
\end{equ}
Moreover, one has for $-n\leq j \leq n-1$, 
\begin{equs}         \label{eq:bound1-gamma}
|1- \gamma^n_j| \leq \frac{1}{3}\Big( \frac{j \pi}{2n } \Big)^2,
\end{equs}
which follows from the inequality $1-(\sin (x)/x)^2\leq (1/3)x^2$ (whose proof we leave as an exercise to the interested reader).

\begin{remark}
Although we do not discuss purely spatial discretisations, it is worth remarking that the above setup would already be enough to define the heat kernel for the spatially discretised operator $\d_t-\Delta_n$ in its spectral representation, which would take the form
\begin{equ}\label{eq:spatial HK}
\tilde p^n_t(x,y)=\sum_{j=-n}^{n-1}e^{-t\lambda_j^n}e_j^n(x)\overline{e_j^n(y)}
\end{equ}
for $t\geq 0$, $x,y,\in\Pi_n$.
\end{remark}

It remains to encode the temporal discretisation in the discrete heat kernel.
Naturally, on the temporal gridpoints $t=kh$ the factor $e^{t\lambda_j^n}$ in $\eqref{eq:spatial HK}$ is simply replaced by $(1+h\lambda_j^n)^k$. Between the gridpoints, we again interpolate linearly.
More precisely, for $j =-n,\ldots,n-1$, for $t\in \Lambda_n$, and $t' \in [t, t+h]$, set 
\begin{equs}       \label{eq:def-of-mu}
\mu^n_j(t')= (1+h \lambda^n_j)^{ t h^{-1}} + h^{-1} (t'-t) \big(   (1+h \lambda^n_j)^{ (t+h) h^{-1}} -  (1+h \lambda^n_j)^{ t h^{-1}} \big). 
\end{equs}
The following, which is based on the CFL condition, will be used frequently. 
\begin{lemma}
There exists $\delta_0>0,  \delta>0$,  depending only on $c$,  such that for all $n \in \mathbb{N},  j = -n, ..., n-1$, $t \geq 0$, we have 
\begin{equ}      \label{eq:Pn-exponential}      
|1+h \lambda^n_j|^{t  h^{-1}}\leq e^{\delta_0 t\lambda^n_j}
\leq e^{-t\delta j^2}.
\end{equ}
\end{lemma}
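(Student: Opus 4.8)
The plan is to establish the two inequalities in \eqref{eq:Pn-exponential} separately, reducing the first one to a scalar bound by exploiting the monotonicity of the power. Writing $a=h\lambda^n_j$, the definitions $h=c(2n)^{-2}$ and $\lambda^n_j=-16n^2\sin^2(j\pi/2n)$ give
\begin{equ}
a=h\lambda^n_j=-4c\,\sin^2\!\Big(\frac{j\pi}{2n}\Big),
\end{equ}
so that, since $\sin^2\le 1$ and $c\in(0,1/2)$ by the CFL condition, one has $a\in[-4c,0]$ with $-4c>-2$. The exponent $th^{-1}\ge 0$ is a (generally non-integer) real power, and both $|1+a|$ and $e^{\delta_0 a}$ are nonnegative; hence it suffices to prove the pointwise bound $|1+a|\le e^{\delta_0 a}$ and then raise both sides to the power $th^{-1}$, using that $s\mapsto s^{th^{-1}}$ is nondecreasing on $[0,\infty)$ (for $t=0$ the claim is trivial). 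Setting $x=-a\in[0,4c]$, the pointwise bound becomes $|1-x|\le e^{-\delta_0 x}$ for $x\in[0,4c]$, where crucially $4c<2$.

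First I would treat $x\in[0,1]$, where $|1-x|=1-x\le e^{-x}\le e^{-\delta_0 x}$ provided $\delta_0\le 1$: the first step is the standard bound $1+y\le e^{y}$ with $y=-x$, and the second holds because $x\ge 0$. When $c\le 1/4$ this already covers all of $[0,4c]\subseteq[0,1]$. The remaining case $x\in(1,4c]$ occurs only when $c>1/4$; there $|1-x|=x-1\le 4c-1$, and the CFL condition gives $4c-1<1$. Since $e^{-\delta_0 x}\ge e^{-4c\delta_0}$ on this interval, it suffices to pick $\delta_0>0$ small enough that $e^{-4c\delta_0}\ge 4c-1$, which is possible precisely because $4c-1<1=e^{0}$; explicitly any $\delta_0\le (4c)^{-1}\log\big(1/(4c-1)\big)$ works. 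Taking $\delta_0=\min\{1,\,(4c)^{-1}\log(1/(4c-1))\}$ (or simply $\delta_0=1$ when $c\le 1/4$) yields a constant depending only on $c$ for which $|1-x|\le e^{-\delta_0 x}$ holds throughout $[0,4c]$, which is the first inequality of \eqref{eq:Pn-exponential}.

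The second inequality is then immediate from \eqref{eq:sunday}: since $\lambda^n_j\le -16 j^2$, we get $\delta_0 t\lambda^n_j\le -16\delta_0 t j^2$ for all $t\ge 0$, so it is enough to set $\delta=16\delta_0$, giving $e^{\delta_0 t\lambda^n_j}\le e^{-t\delta j^2}$. Both $\delta_0$ and $\delta$ depend only on $c$, as required.

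The only genuine obstacle is the regime where $1+h\lambda^n_j$ is negative and close to $-1$, i.e. $x$ close to $2$: there $|1-x|$ approaches $1$, so $-\log|1-x|$ approaches $0$ and no positive $\delta_0$ can satisfy $-\log|1-x|\ge \delta_0 x$. This is exactly where the CFL condition $c<1/2$ is used: it forces $x\le 4c$ to stay bounded away from $2$ by the fixed gap $2-4c>0$, and thereby keeps $\delta_0$ bounded below by a positive constant depending only on $c$. Everything else is elementary scalar calculus.
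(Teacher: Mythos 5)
Your proof is correct and follows essentially the same route as the paper: both reduce the matter to the scalar bound $|1-x|\le e^{-\delta_0 x}$ on $[0,4c]$, split into $x\in[0,1]$ and $x\in(1,4c]$ (where the CFL condition $4c<2$ is what makes a positive $\delta_0$ possible), and then obtain the second inequality from \eqref{eq:sunday} with $\delta=16\delta_0$. The only differences are cosmetic — you handle the $x\in(1,4c]$ case by uniform bounds on both sides rather than the paper's monotonicity comparison, and you make explicit the (implicit in the paper) step of raising the scalar inequality to the nonnegative power $th^{-1}$ — so no further comment is needed.
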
 
\begin{proof}
Recall that $4c \in (0, 2)$.  First, we claim that  there exists a constant $\delta_0>0$ such that for $x\in[0,4c]$ one has $|1-x|\leq e^{-\delta_0 x}$.  Indeed,  if  $x \in [0, 1]$ we have $|1-x|= 1-x \leq e^{-x}$.  In case $4c >1$,  we have that $4c-1 =  e^{-\delta_0' 4/c}$   for $\delta_0' :=  -  (c/4 ) \log(4c-1)$,  and $|x-1|$ is increasing on $[1, 4c]$ while $ e^{-\delta_0' x}$ is decreasing, so that $|1-x| \leq e^{-\delta_0' x}$ on $[1, 4c]$.  Hence, the claim holds true with $\delta_0:= 1 \wedge \delta_0'$.  The first inequality  in \eqref{eq:Pn-exponential}, follows from the fact that 
 for all $n, j$ we have 
\begin{equs}
- h \lambda^n_j = \frac{c}{(2n)^2 } 16n^2 \sin^2 \big( \frac{j \pi}{2n} \big)  \in [0, 4c]. 
\end{equs}
The second inequality in \eqref{eq:Pn-exponential} holds with the choice $\delta := 16 \delta_0$, since $ \lambda^n_j \leq - 16  j^2$, see \eqref{eq:sunday}.
\end{proof}

We can now define the discrete heat kernel and rewrite the approximation scheme \eqref{eq:approx classical form} in a mild form.
Denote by $\kappa_n(t)=\lfloor th^{-1}\rfloor h $ and $\rho_n(x)=\lfloor x 2n  \rfloor (2n)^{-1}$ the leftmost gridpoint from $t$ in $\Lambda_n$ and from $x$ in $\Pi_n$, respectively.
We then set 
\begin{equs}\label{eq:Pn-def}
p^{n}_t(x,y) = \sum_{j=-n}^{n-1}  \mu^n_j (t) e^n_j(x) \overline{e^n_j(\rho_n(y))},
\end{equs}
which is now a function of $t\geq0$, $x,y\in\T$. 

\begin{remark}   
Although each $e_j^n$ is a $\mathbb{C}$-valued function, $p_t^n$ itself is $\R$-valued for all $t\geq 0$.
Indeed, first consider $x \in \Pi_n$, $y \in \T$. Since $\lambda^n_j=\lambda^n_{-j}$ and therefore $\mu_j^n(t)=\mu_{-j}^n(t)$, one sees
\begin{equ}
\sum_{j=-n-1}^{n-1} \mu^n_j(t) e^n_j(x) \overline{e^n_j(\rho_n(y))}  \in \R. 
\end{equ}
In addition, the restriction of $e_{-n}^n$ to $\Pi_n$ takes only $\pm 1$ values, so
$e^n_{-n}(x) \overline{e^n_{-n}(\rho_n(y))}  \in \R$, which combined with the above shows that 
$p_t^{ n}(x,y) \in \R$ for $x \in \Pi_n$, $y \in \R$. The same is true for all $x, y \in \T$, since $p_t^n(\cdot,y)$ is given by linear interpolation between its values on $\Pi_n$.
\end{remark}
Let us  introduce  the discrete convolution 
\begin{equ}\label{eq:convolve-n-def}
f\ast_n g(x):=\int_\T f(x,y)g\big(\rho_n(y)\big)\,dy. 
\end{equ}
Analogously to $\cP$, we then define the linear operators $\cP^n$ by setting $\cP^n_tf:=p^n_t\ast_n f$.
Most of the time we understand $\cP^n_t$ as an operator on $\bB(\T)$, but it can be seen as an operator on $\bB(\Pi_n)$ as well.
In the latter case, the identity 
\begin{equs}      \label{eq:P^n-rep}
\cP^n_h=\id+h\Delta_n
\end{equs}
holds\footnote{Note however that $\cP^n_0$ as an operator on $\bB(\T)$ does not equal the identity.}.
The inductive step \eqref{eq:approx classical form} of the finite difference scheme can therefore be written as 
\begin{equ}\label{eq:inductive mild}
u^n_{ t+h}( x)=\cP^{n}_{h} u^n_{ t}( x)+
h b(u^n_{ t}({x}))+h  \eta_n ({t}, {x}).
\end{equ}
To conclude to a form similar to \eqref{eq:main in mild form}, it remains to show the following simple property.
\begin{proposition}
For  $s,t \in \Lambda_n $,  $x, y \in \T$,  we have 
\begin{equ}\label{eq:kindof semigroup}
\big(p^{n}_{t} \ast_n p^{n}_{s}(\cdot,y)\big)(x)=p_{t+s}^{n}(x,y).
\end{equ}
\end{proposition}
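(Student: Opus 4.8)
The plan is to expand both discrete heat kernels in the spectral representation \eqref{eq:Pn-def}, carry out the discrete convolution \eqref{eq:convolve-n-def} explicitly, and use the discrete orthogonality relation \eqref{eq:orthogonal-n} to collapse the resulting double sum into a single one. Writing out the definition of $\ast_n$, the left-hand side equals
\begin{equ}
\int_\T p^n_t(x,z)\,p^n_s(\rho_n(z),y)\,dz = \int_\T \Big(\sum_{j=-n}^{n-1}\mu^n_j(t)e^n_j(x)\overline{e^n_j(\rho_n(z))}\Big)\Big(\sum_{\ell=-n}^{n-1}\mu^n_\ell(s)e^n_\ell(\rho_n(z))\overline{e^n_\ell(\rho_n(y))}\Big)\,dz.
\end{equ}
Pulling the factors that do not depend on $z$ out of the integral, everything will hinge on computing $\int_\T \overline{e^n_j(\rho_n(z))}\,e^n_\ell(\rho_n(z))\,dz$.

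The key observation is that $\rho_n(z)$ takes the constant value $x\in\Pi_n$ on each interval $[x,x+(2n)^{-1})$, which has length $(2n)^{-1}$, and that $e^n_j$ agrees with $e_j$ on $\Pi_n$ by the interpolation formula \eqref{eq:linear-interpolation}. Hence the integral of any function of $\rho_n(z)$ reduces to a normalised sum over the grid,
\begin{equ}
\int_\T \overline{e^n_j(\rho_n(z))}\,e^n_\ell(\rho_n(z))\,dz = \frac{1}{2n}\sum_{x\in\Pi_n}\overline{e_j(x)}\,e_\ell(x)=\bone_{j=\ell},
\end{equ}
the last equality being precisely \eqref{eq:orthogonal-n} (valid since $-n\le j,\ell\le n-1$). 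Substituting this back and summing out the Kronecker delta will leave
\begin{equ}
\big(p^{n}_{t} \ast_n p^{n}_{s}(\cdot,y)\big)(x)=\sum_{j=-n}^{n-1}\mu^n_j(t)\,\mu^n_j(s)\,e^n_j(x)\,\overline{e^n_j(\rho_n(y))}.
\end{equ}

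It then remains to identify $\mu^n_j(t)\mu^n_j(s)$ with $\mu^n_j(t+s)$. Since $s,t\in\Lambda_n=h\N_0$, at such gridpoints the interpolation in \eqref{eq:def-of-mu} collapses and $\mu^n_j(t)=(1+h\lambda^n_j)^{th^{-1}}$, so that $\mu^n_j(t)\mu^n_j(s)=(1+h\lambda^n_j)^{(t+s)h^{-1}}=\mu^n_j(t+s)$, where I use that $t+s\in\Lambda_n$ as well. Comparing with \eqref{eq:Pn-def} yields $p^n_{t+s}(x,y)$, as claimed. I do not expect any step to present a genuine obstacle; the only points requiring care are that the convolution variable enters $p^n_s$ only through $\rho_n$, so that the $\T$-integral genuinely reduces to the finite orthogonality sum, and that the temporal multiplicativity of $\mu^n_j$ is available precisely because $s$ and $t$ are grid points (it would fail at intermediate times, where $\mu^n_j$ is merely the linear interpolant).
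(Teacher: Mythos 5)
Your proof is correct and follows essentially the same route as the paper's: expand both kernels via \eqref{eq:Pn-def}, use the discrete orthogonality \eqref{eq:orthogonal-n} to collapse the double sum, and multiply the temporal factors $(1+h\lambda^n_j)^{th^{-1}}$ at gridpoints. Your explicit remarks that the $\T$-integral of a function of $\rho_n(z)$ reduces to the normalised grid sum, and that multiplicativity of $\mu^n_j$ holds only because $s,t\in\Lambda_n$, are steps the paper leaves implicit, but the argument is identical.
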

\begin{proof}
By the definitions \eqref{eq:Pn-def}-\eqref{eq:convolve-n-def} and the orthogonality relation \eqref{eq:orthogonal-n} we can write
\begin{equs}
\big(&p^{n}_{t}\ast_n p^{n}_s(\cdot,y)\big)(x)
=\int_\T p_{t}^{n}(x,z) p_{s}^{n}(\rho_n(z),y)\,dz
\\
&=\sum_{j=-n}^{n-1}\sum_{k=-n}^{n-1}(1+h \lambda^n_j)^{{t}h^{-1}}(1+h \lambda^n_k)^{{s} h^{-1}}e^n_j(x) \overline{ e^n_k(\rho_n(y))} \int_\T e^n_k(\rho_n(z)) \overline{ e^n_j(\rho_n(z))} \,dz
\\
&=\sum_{j=-n}^{n-1}(1+h \lambda^n_j)^{({s}+{t}) h^{-1}}e^n_j(x) \overline{ e^n_j(\rho_n(y)) } ,
\end{equs}
as required.
\end{proof}
It follows that for $t \in \Lambda_n, x \in \Pi_n$,  \eqref{eq:approx classical form} can be equivalently written as
\begin{equ}
u^n_{ t}( x)=\cP^{n}_{ t} \psi^n( x)+\int_0^{ t}\cP^{n}_{\kappa_n( t-s)} b(u^n_{\kappa_n(s)})( x)\,ds
+\int_0^{ t}\int_\T p_{\kappa_n( t-s) }^{n}( x,y)\,\xi (dy, ds) ,
\end{equ}
Indeed, this clearly holds for $ t=0$ and for $0< t\in\Lambda_n$ it follows inductively from \eqref{eq:inductive mild} and \eqref{eq:kindof semigroup}.
Recalling that $p^{n}$ is defined for any space-time point, not just the ones on the grid, we then \emph{define} an extension of $u^n$ to the whole of $[0,1]\times\T$ by setting
\begin{equ}\label{eq:approx mild form}
u^n_{t}( x)=\cP^{n}_{t}\psi^n( x)+\int_0^{ t}\cP^{n}_{ \kappa_n(t-s)} b(u^n_{\kappa_n(s)})( x)\,ds
+\int_0^{t}\int_\T p_{ \kappa_n(t-s) }^{n}( x,y)\,\xi (dy, ds) .
\end{equ} 
\begin{remark}\label{rem:random-walk}
It is useful to note an alternative representation of $\cP^{n}$ as the transition kernel of a random walk  indexed by $\Lambda_n$.  Let $X_1,X_2,\ldots$ be i.i.d. random variables with distribution
\begin{equ}
\bP(X_1=0)=1-2c,\quad
\bP(X_1=1)=P(X_1=-1)=c.
\end{equ}
One can observe that the condition $c\leq 1/2$ is necessary in order for the above to be a probability distribution, while our stronger condition $c<1/2$ guarantees that the random walk is ``lazy''.
We then define $S_n=\sum_{i=1}^nX_i $ and for $ t\in \Lambda_n$, set $\widehat{S}^n_t=(2n)^{-1}S_{h^{-1}t}$. 
Then $\cP^n$ is the transition semigroup of $\widehat{S}^n$: for any  function $f:\Pi_n\to\R$ and any $x\in\Pi_n$,
\begin{equ}\label{eq:disc-semigroup}
\cP_t^n f(x)=\E \tilde f(x+\widehat{S}^n_ { t}),
\end{equ}
where $\tilde f$ is the $1$-periodic extension of $f$ from $\Pi_n$ to $(2n)^{-1}\Z$.
\end{remark}
\subsection{Discrete and continuous heat kernel bounds}
We start with three classical heat kernel bounds. 
\begin{lemma}
Let  $(\cS, \mathbb{D}) \in \{ (\cP, \T), (\cP^{\R}, \R)\}$. The  following hold.
\begin{enumerate}[(i)] 
\item  For all  $ \alpha , \beta \in [0, 1]$ with $\alpha \leq \beta$, there exists a constant $N=N(\alpha,\beta)$, such that for all $f \in \cC^\alpha(\mathbb{D})$, $0 
\leq  s \leq t \leq 1$ and $x, y  \in \mathbb{D} $  one has \footnote{with the conventions $0^0=1$, $1/0 = +\infty$,  $\mathcal{S}_0=\id$}
\begin{equ}\label{eq:HK bound mixed}
|\cS_t f(x)-\cS_s f(y)| \leq N\|f\|_{\C^\alpha(\mathbb{D})}\big(|x-y|^\beta +|t-s|^{\beta /2}\big)s^{(\alpha-\beta )/2}.
\end{equ}

\item For any  $\alpha\in[0,1]$,  there exists a constant $N=N(\alpha)$ such that for all $f\in\cC^\alpha(\mathbb{D})$, $ t\in(0,1]$,   and $x_1,x_2,x_3,x_4\in \mathbb{D}$ one has
\begin{equs}\label{eq:HK bound 3}
|\cS_t  & f(x_1)  -\cS_t f(x_2)-\cS_t f(x_3)+\cS_t f(x_4)|
\\
&
\leq N \|f\|_{\C^\alpha(\mathbb{D})}|x_1-x_2| |x_1-x_3| t^{\alpha/2-1}
+N\|f\|_{\C^\alpha(\mathbb{D})}|x_1-x_2-x_3+x_4| t^{(\alpha-1)/2}.
\end{equs}
\item There exists a constant $N$ such that for all $f\in\bB(\mathbb{D})$, $0<s\leq t\leq 1$, and $x, y \in \mathbb{D}$, one has
\begin{equs}        \label{eq:HK bound 4}
| \cS_t  f (x)-\cS_t  f (y)- \cS_s  f (x)- \cS_s  f (y)| \leq N \| f\|_{\mathbb{B}(\mathbb{D})} s^{-3/2} |t-s| |x-y|.
\end{equs}
\end{enumerate}
\end{lemma}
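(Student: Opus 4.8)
The plan is to reduce all three estimates to a single family of \emph{kernel moment bounds}. Writing $q$ for either $p$ or $p^\R$, I would first record that for $t\in(0,1]$, $k\in\{0,1,2,3\}$ and $a\in[0,1]$ one has
\begin{equ}
\int_{\mathbb{D}}|x|^{a}\,|\partial_x^{k} q_t(x)|\,dx\lesssim t^{(a-k)/2},
\end{equ}
together with $\int_{\mathbb{D}} q_t=1$, $q_t\geq 0$, and the heat equation in the form ``$\partial_t q_t$ is a constant multiple of $\partial_x^2 q_t$'' (the constant being $1$ for $p$ and $1/2$ for $p^\R$, and harmlessly absorbed into $\lesssim$). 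For $p^\R$ these are immediate from the explicit Gaussian form and the substitution $x\mapsto x/\sqrt t$; for the torus kernel $p$ they follow by periodisation, since for $t\leq 1$ the $k=0$ summand in $p_t(x)=\sum_{m}(4\pi t)^{-1/2}e^{-(x+m)^2/(4t)}$ carries the stated scaling while the remaining terms are $O(e^{-c/t})$. The two mechanisms used throughout are: each spatial derivative costs $t^{-1/2}$, each power of $|x|$ under the kernel gains $t^{1/2}$, and the cancellation $\int_{\mathbb{D}}\partial_x^{k} q_t=0$ for $k\geq 1$ lets one insert an increment of $f$ so that the seminorm $[f]_{\cC^\alpha}$ (rather than $\|f\|_{\bB}$) appears.

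For \eqref{eq:HK bound mixed} I would split
\begin{equ}
\cS_t f(x)-\cS_s f(y)=\big(\cS_t f(x)-\cS_t f(y)\big)+\big(\cS_t f(y)-\cS_s f(y)\big)
\end{equ}
into a spatial increment at the fixed (larger) time $t$ and a temporal increment at the fixed point $y$. For the spatial part I use the smoothing bound $[\cS_t f]_{\cC^\beta}\lesssim\|f\|_{\cC^\alpha}\,t^{(\alpha-\beta)/2}$, obtained either by interpolating the contraction $[\cS_t f]_{\cC^\alpha}\leq[f]_{\cC^\alpha}$ (from $\int q_t=1$, $q_t\geq0$) against the gradient bound $\|\partial\cS_t f\|_{\bB}\lesssim[f]_{\cC^\alpha}t^{(\alpha-1)/2}$, or equivalently by a near/far case split according to whether $|x-y|\leq\sqrt t$; since $t\geq s$ and $(\alpha-\beta)/2\leq0$ this is at most $\|f\|_{\cC^\alpha}|x-y|^\beta s^{(\alpha-\beta)/2}$. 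For the temporal part I set $g=\cS_s f$, note $[g]_{\cC^\beta}\lesssim\|f\|_{\cC^\alpha}s^{(\alpha-\beta)/2}$, and write $\cS_t f(y)-\cS_s f(y)=(\cS_{t-s}-\id)g(y)=\int_{\mathbb{D}} q_{t-s}(z)\big(g(y-z)-g(y)\big)\,dz$, which is $\lesssim[g]_{\cC^\beta}(t-s)^{\beta/2}$ by the $k=0$, $a=\beta$ moment bound. Adding the two contributions gives the claim; the degenerate case $s=0$ is covered by the stated conventions $\cS_0=\id$, $1/0=+\infty$.

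Estimate \eqref{eq:HK bound 3} I would prove by reducing the general four-point configuration to a ``parallelogram''. Setting $x_4'=x_2+x_3-x_1$, so that $x_1-x_2-x_3+x_4'=0$ and $x_4-x_4'=x_1-x_2-x_3+x_4$, I write
\begin{equ}
\cS_t f(x_1)-\cS_t f(x_2)-\cS_t f(x_3)+\cS_t f(x_4)=\big(\cS_t f(x_1)-\cS_t f(x_2)-\cS_t f(x_3)+\cS_t f(x_4')\big)+\big(\cS_t f(x_4)-\cS_t f(x_4')\big).
\end{equ}
The first bracket is a genuine mixed second difference along the directions $x_1-x_2$ and $x_1-x_3$; applying the fundamental theorem of calculus twice bounds it by $|x_1-x_2|\,|x_1-x_3|\,\|\partial^2\cS_t f\|_{\bB}$, and $\|\partial^2\cS_t f\|_{\bB}\lesssim[f]_{\cC^\alpha}t^{\alpha/2-1}$ by the $k=2$ moment bound (after inserting an increment of $f$ using $\int\partial_x^2 q_t=0$). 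The second bracket is $\leq\|\partial\cS_t f\|_{\bB}\,|x_4-x_4'|\lesssim[f]_{\cC^\alpha}t^{(\alpha-1)/2}\,|x_1-x_2-x_3+x_4|$, which is the second term.

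Finally, for the mixed space--time increment in \eqref{eq:HK bound 4} (with $f$ merely bounded, i.e. $\alpha=0$; note the intended combination is the mixed second difference) I would integrate in time and use the heat equation:
\begin{equ}
\big(\cS_t f(x)-\cS_t f(y)\big)-\big(\cS_s f(x)-\cS_s f(y)\big)=\int_s^t\partial_r\big(\cS_r f(x)-\cS_r f(y)\big)\,dr.
\end{equ}
Here $\partial_r\cS_r f$ is a constant multiple of $\partial_x^2\cS_r f=(\partial_x^2 q_r)\ast f$, so the integrand equals a constant times $\int_{\mathbb{D}}\big(\partial_x^2 q_r(x-z)-\partial_x^2 q_r(y-z)\big)f(z)\,dz$, which by one more application of the fundamental theorem of calculus and the $k=3$ moment bound is $\lesssim\|f\|_{\bB}\,|x-y|\,r^{-3/2}$; integrating and using $r\geq s$ gives $\int_s^t r^{-3/2}\,dr\leq s^{-3/2}|t-s|$, as required. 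The routine-but-delicate part, and the one I would expect to demand the most care, is establishing the moment bounds \emph{uniformly for both kernels and for $t$ up to $1$}: on the torus one must control the periodisation so the small-$t$ Gaussian scaling survives, and one must track the cancellation $\int\partial_x^{k} q_t=0$ so that $[f]_{\cC^\alpha}$ appears rather than $\|f\|_{\bB}$. Once these are in hand, (i)--(iii) are each a short combination of a Taylor/FTC expansion with one moment bound, the only genuine bookkeeping being the parallelogram reduction in (ii) and the correct appearance of the smaller time $s$ in the final exponents.
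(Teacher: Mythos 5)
Your proof is correct, and its core decompositions coincide with the paper's: for part (ii) the paper performs exactly your parallelogram reduction (its fundamental-theorem-of-calculus identity introduces the point $x_3+x_2-x_1$, which is your $x_4'$), and for part (iii) it likewise integrates the heat equation in time; both you and the paper read (iii) as the mixed second difference, the two minus signs in the statement being a typo. Where you genuinely diverge is in how the smoothing estimates are produced. The paper never writes explicit kernel bounds: it takes part (i) as known (citing \cite[Appendix~A]{BDG}), reads off $\|\nabla\cS_t f\|_{\bB(\mathbb{D})}\lesssim t^{(\alpha-1)/2}\|f\|_{\cC^\alpha(\mathbb{D})}$ from \eqref{eq:HK bound mixed} with $\beta=1$, and then obtains every higher-order estimate by semigroup splitting: $\nabla^2\cS_t f=\nabla\cS_{t/2}\nabla(\cS_{t/2}f)$ for (ii), and in (iii) $\Delta\cS_{s+\theta(t-s)}f=\cS_{s/2+\theta(t-s)}\Delta\cS_{s/2}f$, so that the spatial increment is handled by the gradient estimate applied to the outer semigroup and only $\|\Delta\cS_{s/2}f\|_{\bB(\mathbb{D})}\lesssim s^{-1}\|f\|_{\bB(\mathbb{D})}$ is needed for the inner one --- third derivatives of the kernel never appear. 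You instead build everything on explicit Gaussian/periodised kernel moment bounds up to $\partial_x^3$, with the cancellation $\int_{\mathbb{D}}\partial_x^k q_t=0$ supplying the H\"older seminorm in place of the sup norm. The trade-off: the paper's bootstrap is shorter and sidesteps both the periodisation control and the $k=3$ moment bound, but it outsources (i) to a citation; your route is self-contained --- it actually proves (i), which the paper does not --- and makes the uniformity over the two kernels and over $t\in(0,1]$ explicit, at the cost of the kernel computations you yourself flag as the delicate part.
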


\begin{proof}
The estimate in \eqref{eq:HK bound mixed} is very standard.  A proof of it and its more general variants can be found for example in \cite[Appendix A]{BDG}.

For \eqref{eq:HK bound 3}, notice that  by the fundamental theorem of calculus we have 
\begin{equs}
 \cS_t &f(x_1)- \cS_t f(x_2)-\cS_t f(x_3)+\cS_t f(x_4)
\\
&=  \int_0^1\Big(  \nabla \cS_t f \big(x_1+\theta(x_2-x_1) \big) - \nabla \cS_t f \big(x_3+\theta(x_2-x_1) \big)  \Big) (x_1-x_2) \, d \theta 
\\
&\quad+ \cS_t f(x_3+x_2-x_1) - \cS_tf(x_4).
\end{equs}
Consequently, we get 
\begin{equs}
 |\cS_t & f(x_1)- \cS_t f(x_2)-\cS_t f(x_3)+\cS_t f(x_4)|
\\
&\leq   \|  \nabla ^2 \cS_t f \|_{\mathbb{B}(\mathbb{D})} |x_1-x_3| |x_1- x_2| + \|  \nabla \cS_t f \|_{\mathbb{B}(\mathbb{D})}|x_1-x_2-x_3+x_4|     \label{eq:idk1}
\end{equs}
From \eqref{eq:HK bound mixed}, with $\beta=1$, it follows that 
\begin{equs}                   
 \|  \nabla \cS_t f \|_{\mathbb{B}(\mathbb{D})} \lesssim t^{(\alpha-1)/2} \|f\|_{\C^\alpha(\mathbb{D})}.       \label{eq:idk2}
\end{equs}
Applying  \eqref{eq:idk2} twice (the first with the choice $\alpha=0$), we get
\begin{equs}                      
 \|  \nabla^2  \cS_t f \|_{\mathbb{B}(\mathbb{D})} =  \|  \nabla  \cS_{t/2}   \nabla( \cS_{t/2} f)  \|_{\mathbb{B}(\mathbb{D})}&  \lesssim t^{-1/2} \|  \nabla( \cS_{t/2} f)   \|_{\mathbb{B}(\mathbb{D})} 
 \\
 &  \lesssim t^{-1+\alpha/2} \|f\|_{\C^\alpha(\mathbb{D})}.     \label{eq:idk3}
\end{equs}
Therefore, from \eqref{eq:idk1}-\eqref{eq:idk3} we get \eqref{eq:HK bound 3}.

Finally, from the fundamental theorem of calculus, the identity $\D_t \cS= \Delta \cS$, 
\eqref{eq:HK bound mixed} with $\beta=1$, and \eqref{eq:idk3} with $\alpha=0$,  we  get 
\begin{equs}
 | \cS_t & f (x)-\cS_t  f (y)- \cS_s  f (x)- \cS_s  f (y)| 
\\
& =  \Big| (t-s) \int_0^1 \Big(  \Delta \cS_{ s+\theta(t-s)}  f (x)-  \Delta \cS_{ s+\theta(t-s)}  f (y) \Big) \, d \theta  \Big|
\\
& =  \Big| (t-s) \int_0^1 \Big(   \cS_{s/2+ \theta(t-s)} \Delta \cS_{s/2}  f (x)-  \cS_{s/2+ \theta(t-s)}  \Delta \cS_{s/2}  f (y) \Big) \, d \theta  \Big| 
\\
&  \lesssim  |x-y| |t-s|  s^{-1/2} \| \Delta S_{s/2} f \|_{\mathbb{B}(\mathbb{D})} 
\\
&  \lesssim  |x-y| |t-s| s^{-3/2} \| f\|_{\mathbb{B}(\mathbb{D})}. 
\end{equs}
This finishes the proof. 
\end{proof}
Before moving on, we remark a simple bound that will be frequently used.
\begin{proposition}\label{prop:summation}
For any $\lambda>0$ and $\gamma\geq 0$ there exists a $N=N(\lambda,\gamma)$ such that for all $t\in (0,1]$ the following bound holds
\begin{equ}\label{eq:summation}
\sum_{j\in\Z} |j|^{\gamma}e^{-\lambda j^2 t}\leq N t^{-(\gamma+1)/2}.
\end{equ}
\end{proposition}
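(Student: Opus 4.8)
The plan is to reduce the estimate to the special case $\gamma=0$ by absorbing the polynomial factor into half of the Gaussian, and then to treat the resulting $\gamma=0$ sum by an elementary integral comparison.

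First I would split the exponential and write, for every $j\in\Z$,
\[
|j|^\gamma e^{-\lambda j^2 t} = \Big(|j|^\gamma e^{-\lambda j^2 t/2}\Big)\, e^{-\lambda j^2 t/2}.
\]
The first factor is bounded uniformly in $j$ by $\sup_{x\geq 0} x^\gamma e^{-\lambda x^2 t/2}$. A one-line maximisation (the maximiser is $x_\ast=\sqrt{\gamma/(\lambda t)}$) shows this supremum equals $(\gamma/\lambda)^{\gamma/2}e^{-\gamma/2}\,t^{-\gamma/2}$, i.e. a constant depending only on $\lambda,\gamma$ times $t^{-\gamma/2}$. Hence
\[
\sum_{j\in\Z}|j|^\gamma e^{-\lambda j^2 t} \leq N(\lambda,\gamma)\, t^{-\gamma/2}\sum_{j\in\Z} e^{-\lambda j^2 t/2},
\]
so it remains only to prove the $\gamma=0$ bound $\sum_{j\in\Z}e^{-\lambda j^2 t/2}\lesssim t^{-1/2}$.

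For the latter I would invoke the integral test, which now applies cleanly because the integrand is genuinely monotone: the function $x\mapsto e^{-\lambda x^2 t/2}$ is even and decreasing on $[0,\infty)$, so $e^{-\lambda j^2 t/2}\leq \int_{j-1}^{j} e^{-\lambda x^2 t/2}\,dx$ for each $j\geq 1$. Summing and using evenness,
\[
\sum_{j\in\Z}e^{-\lambda j^2 t/2} \leq 1 + 2\int_0^\infty e^{-\lambda x^2 t/2}\,dx = 1 + \sqrt{2\pi/(\lambda t)}.
\]
Since $t\in(0,1]$ we have $1\leq t^{-1/2}$, so the right-hand side is $\lesssim t^{-1/2}$ with a constant depending only on $\lambda$.

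Combining the two displays and using once more that $t\leq 1$ (so that $t^{-\gamma/2}t^{-1/2}=t^{-(\gamma+1)/2}$) yields the claim. The only computations with any content are the maximisation of $x^\gamma e^{-\lambda x^2 t/2}$ and the evaluation of the Gaussian integral, both elementary; there is no genuine obstacle. The one point worth flagging is that the integral test cannot be applied directly to $|x|^\gamma e^{-\lambda x^2 t}$, which is unimodal rather than monotone near its peak, and the purpose of splitting off half of the Gaussian is precisely to convert the polynomially weighted sum into the plain one, whose integrand is monotone on each half-line.
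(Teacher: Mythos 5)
Your proof is correct, and it takes a somewhat different route from the paper's. The paper compares the sum in one stroke with the weighted integral $\int_{-\infty}^{\infty}(|x|+1)^{\gamma}e^{-\lambda x^{2}t}\,dx$ --- the inflation of $|x|^{\gamma}$ to $(|x|+1)^{\gamma}$ is precisely what restores term-by-term domination on unit intervals despite the integrand's interior peak --- and then obtains the power $t^{-(\gamma+1)/2}$ immediately from the change of variables $x\mapsto t^{-1/2}x$. You instead split the exponential, absorb the polynomial weight into one half of the Gaussian through the pointwise bound $\sup_{x\ge 0}x^{\gamma}e^{-\lambda x^{2}t/2}\le N(\lambda,\gamma)\,t^{-\gamma/2}$, and reduce to the $\gamma=0$ sum, where the integral test applies verbatim to a genuinely monotone integrand. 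What your version buys is modularity and a fully explicit monotonicity justification (your closing remark about why the naive integral test fails on $|x|^{\gamma}e^{-\lambda x^{2}t}$ is exactly the issue the paper's one-line comparison handles silently via the $(|x|+1)^{\gamma}$ weight); what the paper's version buys is brevity and the scaling in $t$ appearing in a single substitution, with constant $\int_{-\infty}^{\infty}(|x|+1)^{\gamma}e^{-\lambda x^{2}}\,dx$, rather than being assembled from the two factors $t^{-\gamma/2}$ and $t^{-1/2}$. One cosmetic point: the identity $t^{-\gamma/2}t^{-1/2}=t^{-(\gamma+1)/2}$ is exponent arithmetic and does not require $t\le 1$; the hypothesis $t\le 1$ is only needed where you invoke $1\le t^{-1/2}$.
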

\begin{proof}
By the change of variables $x \mapsto t^{-1/2}x$, we have 
\begin{equ}
\sum_{j\in\Z} |j|^{\gamma}e^{-\lambda j^2 t} \leq \int_{-\infty}^ \infty \big( |x|+1 \big)^\gamma e^{-\lambda x^2 t} \, dx =  t^{-(\gamma+1) /2} \int_{-\infty}^ \infty \big(  |x|+1 \big)^\gamma e^{-\lambda x^2 } \, dx.
\end{equ}
\end{proof}
As a toy example, one can see that with some absolute constant $N>0$ one has for all $s\in(0,1]$
\begin{equ}\label{eq:HK-L2}
N^{-1}s^{-1/2}\leq \|p_s\|_{L_2(\T)}^2=\sum_{k\in\Z}e^{-4\pi k^2s}\leq N^{-1}s^{-1/2},
\end{equ}
applying Proposition \ref{prop:summation} with $\gamma=0$ for the second inequality.
The first inequality in \eqref{eq:HK-L2} follows from bounding the sum from below by its restriction to $|k|\leq s^{-1/2}$, so that each of the $1+\lfloor s^{-1/2}\rfloor$ terms are bounded from below by $e^{-4\pi}$.

\begin{remark}\label{rem:interpolation}
Another useful tool that will be repeatedly used is interpolation.
It is known that there exists an absolute constant $N$ such that for all $n\in\N$ and $\alpha\in(0,1)$ one has
\begin{equ}
\sup_{\lambda\in(0,1]}\lambda^{-\alpha}\inf_{ \substack{ f_0, f_1 : \Pi_n \to \R\\ f=f_0+f_1}}  ( \| f_0 \|_{\bB(\Pi_n)} + \lambda \| f_1 \|_{\C^1(\Pi_n)} )\leq N\|f\|_{\C^\alpha(\Pi_n)},
\end{equ}
see \cite[Example 1.1.8]{Lunardi} for a proof that carries through in our setting (equivalently, simply bound the infimum by the choice when $f_1$ is the convolution of $f$ with the indicator of $[-\lambda/4,\lambda/4]\cap\Pi_n$).
It is a well-known consequence of the above bound (see e.g. \cite[Theorem 1.1.6]{Lunardi}) that if a linear operator $T$ mapping from $C^1(\Pi_n)$ to a normed vector space $V$ satisfies $\|T f\|_V\leq K_0 \|f\|_{\bB(\Pi_n)}$ and $\|T f\|_V\leq K_1 \|f\|_{\C^1(\Pi_n)}$ with some constants $K_0,K_1$ for all $f\in\bB(\Pi_n)$, then for any $\alpha\in(0,1)$ the bound $\|T f\|_V\leq K_0^{1-\alpha}K_1^\alpha \|f\|_{\C^\alpha(\Pi_n)}$ also holds.
\end{remark}

Heat kernel estimates for the discrete heat kernels $\cP^n$ are less established.
Since they are piecewise linear in time on each interval between neighbouring gridpoints, most estimates will be stated only for $t\in\Lambda_n$.
For the initial time we only need the straightforward property
\begin{equ}\label{eq:Pn-initial-L2}
\| p^n_0(x,\cdot)\|^2_{L_2(\T)}=2n.
\end{equ}
For gridpoints after the initial time we recover almost the usual heat kernel bounds, at the cost of a $\log$ factor.
\begin{lemma}    \label{lem:Discrete-HK-bounds}
Let $\alpha, \beta \in [0,1]$ with  $ \alpha \leq \beta$. There exists a constant $N=N(\alpha, \beta, c)$ such that for all $f:\Pi_n\to\R$, $n\in\N$,  $x, z \in \Pi_n $, $t \in \Lambda_n\cap(0,1]$, the following bound holds 
\begin{equs}     \label{eq:discrete-HK-bounds}
| \cP_{t}^{n} f (x) -\cP_{t}^{n} f (z) | \leq  N (\log(2n))^{(\beta-\alpha)/2 }|t|^{(\alpha-\beta)/2} | x- z |^\beta  \|f\|_{\cC^\alpha (\Pi_n)}.
\end{equs}
\end{lemma}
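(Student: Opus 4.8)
The plan is to pass to the spectral representation of $\cP^n_t$ on the grid and then reduce the two-parameter estimate \eqref{eq:discrete-HK-bounds} to a single \emph{gradient smoothing} bound by two elementary interpolations. For $x\in\Pi_n$ and $t\in\Lambda_n$ one has, writing $\widehat f_j:=(2n)^{-1}\sum_{w\in\Pi_n}\overline{e_j(w)}f(w)$ and using that $e^n_j=e_j$ and $\mu^n_j(t)=(1+h\lambda^n_j)^{th^{-1}}$ on the grid,
\[
\cP^n_t f(x)=\sum_{j=-n}^{n-1}(1+h\lambda^n_j)^{th^{-1}}e_j(x)\,\widehat f_j .
\]
I would introduce the discrete gradient $\nabla_n g(x):=2n\big(g(x+(2n)^{-1})-g(x)\big)$ and aim at three statements: (A) preservation, $|\cP^n_t f(x)-\cP^n_t f(z)|\le N|x-z|^\alpha\|f\|_{\cC^\alpha(\Pi_n)}$; (B) smoothing, $\|\nabla_n\cP^n_t f\|_{\bB(\Pi_n)}\le N(\log 2n)^{(1-\alpha)/2}t^{(\alpha-1)/2}\|f\|_{\cC^\alpha(\Pi_n)}$; and (C) the combination of (A) and (B) into \eqref{eq:discrete-HK-bounds}.

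Step (A) is immediate from the random walk representation \eqref{eq:disc-semigroup}: since $\cP^n_t f(x)=\E\tilde f(x+\widehat S^n_t)$ and the two points $x,z\in\Pi_n$ are translated by the same lattice amount, $|\cP^n_t f(x)-\cP^n_t f(z)|\le\E|\tilde f(x+\widehat S^n_t)-\tilde f(z+\widehat S^n_t)|\le[f]_{\cC^\alpha(\Pi_n)}|x-z|^\alpha$; in particular $\cP^n_t$ is an $\bB(\Pi_n)$-contraction. For (B) I would interpolate in the regularity of $f$ via Remark \ref{rem:interpolation} applied to the linear functional $f\mapsto\nabla_n\cP^n_t f(x)$, so that it suffices to treat $\alpha=1$ and $\alpha=0$. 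The case $\alpha=1$ is trivial: $\nabla_n$ and $\cP^n_t$ are both translation invariant on the lattice torus, hence commute, and the contraction property gives $\|\nabla_n\cP^n_t f\|_\bB=\|\cP^n_t\nabla_n f\|_\bB\le\|\nabla_n f\|_\bB\le\|f\|_{\cC^1(\Pi_n)}$. Finally, for Step (C), once (A) (with exponent $\alpha$) and (B) (which after telescoping over at most $2n|x-z|$ neighbouring gridpoints yields the exponent $1$) are available, I take the geometric mean with weight $\theta=(\beta-\alpha)/(1-\alpha)$; a direct check shows the $|x-z|$, $t$ and $\log(2n)$ powers combine into exactly \eqref{eq:discrete-HK-bounds}.

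The whole difficulty is concentrated in the case $\alpha=0$ of (B), i.e.\ $\|\nabla_n\cP^n_t f\|_{\bB(\Pi_n)}\le N(\log 2n)^{1/2}t^{-1/2}\|f\|_{\bB(\Pi_n)}$. Writing $\nabla_n\cP^n_t f(x)=(2n)^{-1}\sum_{w\in\Pi_n}\nabla^{(x)}_n p^n_t(x,w)\,f(w)$ (the gradient acting in the $x$-variable), this reduces to bounding the discrete $L^1$-norm $(2n)^{-1}\sum_w|\nabla^{(x)}_n p^n_t(x,w)|$ by $N(\log 2n)^{1/2}t^{-1/2}$. A plain Fourier/$\ell^1$ bound is off by a full power of $t^{1/2}$ here, so instead I would insert the lattice-distance weight and use Cauchy--Schwarz,
\[
\frac1{2n}\sum_w|\nabla^{(x)}_n p^n_t(x,w)|\le\Big(\frac1{2n}\sum_w\frac{1}{1+2n\,d(x,w)}\Big)^{1/2}\Big(\frac1{2n}\sum_w|\nabla^{(x)}_n p^n_t(x,w)|^2\big(1+2n\,d(x,w)\big)\Big)^{1/2},
\]
where $d$ is the distance on $\T$. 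The first factor is $\lesssim (n^{-1}\log 2n)^{1/2}$ because $2n\,d(x,w)$ runs through the integers $0,1,\dots,n$ with bounded multiplicity, which is precisely where the logarithm enters. The second factor, the weighted energy, must be shown to be $\lesssim n\,t^{-1}$, and this is the step I expect to be the main obstacle. It is an $L^2$ quantity, hence amenable to the discrete Parseval identity \eqref{eq:orthogonal-n}: the unweighted energy equals $\sum_j|\mu^n_j(t)|^2\,\big(4n\sin(\tfrac{\pi j}{2n})\big)^2$, which by \eqref{eq:Pn-exponential}, the elementary bound $4n\sin(\tfrac{\pi j}{2n})\le 2\pi|j|$, and Proposition \ref{prop:summation} is $\lesssim t^{-3/2}$, while the distance weight corresponds under Parseval to differencing the symbol $\mu^n_j(t)\,4n\sin(\tfrac{\pi j}{2n})$ in $j$. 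Controlling these $j$-differences of $\mu^n_j(t)=(1+h\lambda^n_j)^{th^{-1}}$ and the resulting second-moment sum (via a Cauchy--Schwarz in $j$ reducing to $\sum_j j^4 e^{-2t\delta j^2}\lesssim t^{-5/2}$ from Proposition \ref{prop:summation}), so as to gain the crucial factor $t^{1/2}$ over the trivial bound, is the delicate bookkeeping; using $t\ge h=c(2n)^{-2}$ to absorb the leftover $t^{-3/2}$ into $n\,t^{-1}$ then closes the argument.
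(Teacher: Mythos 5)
Your proposal is correct, but it reaches the core estimate by a genuinely different route than the paper. Both arguments share the same outer structure: reduce to $\alpha=0$, $\beta=1$ and to neighbouring gridpoints, then recover general $\alpha\le\beta$ by operator interpolation (Remark \ref{rem:interpolation}) and the elementary geometric-mean interpolation. The difference lies in how the factor $t^{-1/2}\sqrt{\log(2n)}$ is produced. The paper first proves the $L_1$-to-difference bound \eqref{eq:to-be-applied-soon}, $|\cP^n_t g(0)-\cP^n_t g(1/2n)|\lesssim t^{-1}n^{-1}\|g\|_{L_1(\Pi_n)}$, and then repairs the missing power $t^{1/2}$ \emph{probabilistically}: by the random walk representation \eqref{eq:disc-semigroup} and Hoeffding's inequality, the walk stays in a window of width $\sim\tilde K\sqrt{t}$ up to probability $e^{-c\tilde K^2/2}$, so one may apply \eqref{eq:to-be-applied-soon} to $f$ restricted to that window, whose $L_1$-norm is $\lesssim\tilde K\sqrt{t}\,\|f\|_{\bB(\Pi_n)}$; the logarithm enters through the choice $\tilde K\sim\sqrt{\log (2n)}$ needed to make the tail $O(n^{-1})$. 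You instead stay entirely on the Fourier side: the logarithm comes from the harmonic sum $\frac{1}{2n}\sum_w (1+2n\,d(x,w))^{-1}\lesssim n^{-1}\log(2n)$ in your weighted Cauchy--Schwarz, and the missing $t^{1/2}$ is recovered from the weighted energy, which you reduce, via $d(x,w)\lesssim|e^{2\pi i(w-x)}-1|$, Parseval \eqref{eq:orthogonal-n}, and a second Cauchy--Schwarz, to differenced-symbol sums. I checked the bookkeeping you flagged as the main obstacle, and it does close: for $t\in\Lambda_n$, writing $m=th^{-1}$, one has $|\mu^n_{j+1}(t)-\mu^n_j(t)|\le m\,h\,|\lambda^n_{j+1}-\lambda^n_j|\max\big(|1+h\lambda^n_{j+1}|,|1+h\lambda^n_j|\big)^{m-1}\lesssim t(|j|+1)e^{-\delta' t j^2}$, since $|\lambda^n_{j+1}-\lambda^n_j|=16n^2|\sin(\tfrac{(2j+1)\pi}{2n})\sin(\tfrac{\pi}{2n})|\lesssim |j|+1$ and the last factor is controlled by \eqref{eq:Pn-exponential}; hence the differenced-symbol sum is $\lesssim t^2\sum_j(|j|+1)^4e^{-2\delta' tj^2}+\sum_j e^{-2\delta tj^2}\lesssim t^{-1/2}$ by Proposition \ref{prop:summation}, which together with the unweighted energy $\lesssim t^{-3/2}$ and $t\ge h$ gives your weighted energy bound $\lesssim n t^{-1}$. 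As for what each approach buys: the paper's localization is conceptually transparent (the $\sqrt{\log}$ loss is a Gaussian-tail phenomenon) and avoids symbol calculus entirely; yours is purely deterministic, needs no probabilistic input beyond your step (A) (which could itself be done spectrally), and makes explicit that the logarithmic loss is a borderline-divergent harmonic sum.
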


\begin{proof}
First assume $ \alpha=0 $ and $\beta=1$.
Note that in this case it suffices to consider neighbouring points $x,z\in\Pi_n$ (by virtue of the triangle inequality) and in fact only the case $x=0$ and $z=1/2n$ (by virtue of translation invariance of the H\"older norm).
 For a function $g : \Pi_n \to \R$, denote $\|g\|_{L_1(\Pi_n)}=(2n)^{-1}\sum_{x\in\Pi_n}|g(x)|$.
We then have 
\begin{equs}
 | \cP_{t}^{n} g (0) -\cP_{t}^{n} g (1/2n) |
&= 
\Big| \sum_{j=-n}^{n-1} (1+h \lambda^n_j)^{t h^{-1}}\big( e^n_j(0)- e^n_j(1/2n)\big) \int_{\T}\overline{e^n_j(\rho_n(y))} g(\rho_n(y))\,dy   
\,\Big|
\\
&\lesssim   n^{-1}\|g\|_{L_1(\Pi_n)}\sum_{j=-n}^{n-1}|1+h  \lambda^n_j| ^{t h^{-1} }|j|.
\end{equs}
Using \eqref{eq:Pn-exponential} and then Proposition \ref{prop:summation} (with $\lambda=\delta$ and $\gamma=1$), we get
\begin{equs}      \label{eq:to-be-applied-soon}
  | \cP_{t}^{n} g (0) -\cP_{t}^{n}  g (1/2n) |   \lesssim t^{-1} n^{-1}\|g\|_{L_1(\Pi_n)}.
\end{equs}
Let us   $\tilde{K}= \sqrt{2 \log(2n) /c}$ and let us consider two cases, namely,  $2  \sqrt{t} \tilde{K} \geq 1/4$  and  $2  \sqrt{t} \tilde{K} < 1/4$.

In the first case, the claim follows directly  by applying  \eqref{eq:to-be-applied-soon} to our given function $f$, since $\|f\|_{L_1(\Pi_n)} \leq \|f\|_{\bB(\T)}$. 

 We now focus to the case $2  \sqrt{t} \tilde{K} < 1/4$.
As a brief detour, take some $K>0$ and recall the notations $X_i$, $S_n$, $\widehat{S}^n_{ t}$ from Remark \ref{rem:random-walk} and the representation \eqref{eq:disc-semigroup}.
Denote furthermore by $N_n$ the number of nonzero elements in $\{X_1,\ldots,X_n\}$. Then conditionally on $N_n=\ell$, $(S_n+\ell)/2$ has binomial distribution with parameters $1/2$ and $\ell$.
Hoeffding's inequality implies that 
\begin{equ}
\bP\big(\Binom(1/2,\ell)\geq (\ell+K\sqrt{\ell})/2\big)\leq e^{-K^2/2},
\end{equ}
which gives
\begin{equs}
\bP\big(S_n\geq K\sqrt{n}\big)&=\sum_{\ell=1}^n\bP\big(S_n\geq K\sqrt{n}|N_n=\ell\big)\bP(N_n=\ell)
\\
&\leq\sum_{\ell=1}^n\bP\Big((S_n+\ell)/2\geq (\ell +K\sqrt{\ell})/2 \Big|N_n=\ell\Big)\bP(N_n=\ell)
\\
&\leq e^{-K^2/2}.
\end{equs}
Since $K$ and $n$  were arbitrary, this implies that 
\begin{equ}
\bP\big(\widehat{S}^n_{ t}\geq \tilde{K} \sqrt{t}\big)=\bP\big(S_{c^{-1}(2n)^2 t}\geq \tilde{K}\sqrt{(2n)^2 t}\big)\leq e^{-c\tilde{K}^2/2}.     \label{eq:exp-bound}
\end{equ}
Without loss of generality, we can assume that $n \geq 3$, in which case we have that  $\tilde{K}>c^{-1/2}$.  For $f : \Pi_n \to \R$,  by using the  representation \eqref{eq:disc-semigroup},   we have 
\begin{equs}
 | \cP_{t}^{n}  f (0) -\cP_{t}^{n}  f (1/2n) | &  = | \bE \tilde{f}( \widehat{S}^n_{t}) - \bE \tilde{f}(1/2n+ \widehat{S}^n_{t}) | 
 \\
& \leq   \big| \bE (\tilde{f}\bone_{A_{\tilde{K}, t}})( \widehat{S}^n_{t})  -\bE (\tilde{f}\bone_{A_{\tilde{K}, t}})(1/2n+ \widehat{S}^n_{t}) |
 \\
  & + \| f\|_{\bB(\Pi_n) }\Big( \bP\big( \widehat{S}^n_{t}  \in A_{\tilde{K}, t}^c  \big) + \bP\big( 1/2n+\widehat{S}^n_{t}  \in A^c_{\tilde{K}, t}\big)  \Big),
  \label{eq:less-srt-t}
\end{equs}
where $A_{\tilde{K}, t}= \{ x \in \R :  |x- k | \leq 2 \tilde{K} \sqrt{t}, \text{for some $k \in \mathbb{Z}$} \}$.  Notice that 
\begin{equs}
\bP \Big( 1/2n+\widehat{S}^n_{t}  \in A^c_{\tilde{K}, t}  \Big)
& \leq 
 \bP \Big( |1/2n+\widehat{S}^n_{t} |\geq  2\tilde{K}\sqrt{t}  \Big)  
 \\
 &   \leq \bP \Big(  |\widehat{S}^n_{t} |\geq  2\tilde{K}\sqrt{t} -1/2n  \Big) 
\\
&= \bP\Big( |\widehat{S}^n_{t} |\geq  2\tilde{K}\sqrt{t} -\sqrt{c^{-1}h} \Big) 
\\
& \leq  \bP( |\widehat{S}^n_{t} |\geq  \tilde{K}\sqrt{t} )    \leq 2 e^{-c\tilde{K}^2/2} ,
\end{equs}
where for the third inequality we have used that  $t \geq h $ and that $\tilde{K}>c^{-1/2}$,  and for the last step we have used \eqref{eq:exp-bound}.  The same bound holds for $\bP\big( \widehat{S}^n_{t}  \in A_{\tilde{K}, t}^c  \big)$, so that 
\begin{equs}
\bP\big( \widehat{S}^n_{t}  \in A_{\tilde{K}, t}^c  \big) + \bP\big( 1/2n+\widehat{S}^n_{t}  \in A^c_{\tilde{K}, t}\big) \leq 4e^{-c\tilde{K}^2/2}.  \label{eq:whatever}
 \end{equs}
 On the other hand,  to estimate the first term on the right hand side of \eqref{eq:less-srt-t}, we can use \eqref{eq:to-be-applied-soon} with $g(\cdot)= (\tilde{f} \bone_{A_{\tilde{K}, t}})(\cdot)$. This gives 
 \begin{equs}
  \big| \bE (\tilde{f}\bone_{A_{\tilde{K}, t}})( \widehat{S}^n_{t})  -\bE (\tilde{f}\bone_{A_{\tilde{K}, t}})(1/2n+ \widehat{S}^n_{t}) |
& \lesssim  {t}^{-1} \frac{1}{2n} \|  f (\cdot)\bone_{|\cdot| \leq 2 \tilde{K} \sqrt{t} }\|_{L_1(\Pi_n)}
 \\
 & \lesssim {t}^{-1/2}  \frac{1}{n} \tilde{K} \|f \|_{\mathbb{B}(\Pi_n)}
 \end{equs}
 Consequently, by \eqref{eq:less-srt-t}, and the above, we get 
 \begin{equs}
  | \cP_{t}^{n}  f (0) -\cP_{t}^{n}  f (1/2n) |  \lesssim   {t}^{-1/2} \tilde{K} \frac{1}{n} \|f \|_{\mathbb{B}(\Pi_n)}+ e^{-c\tilde{K}^2/2} \| f \|_{\mathbb{B}(\Pi_n)},
\end{equs}
which upon recalling that $\tilde{K}= \sqrt{2 \log(2n) /c}$ gives
\begin{equs}
  | \cP_{t}^{n}  f (0) -\cP_{t}^{n}  f (1/2n) |  \lesssim   {t}^{-1/2} \sqrt{\ln(2n)} n^{-1} \|f \|_{\mathbb{B}(\Pi_n)}.
\end{equs}
As mentioned at the beginning of the proof, this yields \eqref{eq:discrete-HK-bounds} in the case $\alpha=0$, $\beta=1$. The case $\beta=1$, $\alpha=1$  follows from the trivial bound
$$
| \cP_{t}^{n}  f (x) -\cP_{t}^{n}  f (z) | \leq |x-z| \|f\|_{\C^1(\Pi_n)}.
$$
The case $\beta=1$, $\alpha\in[0,1]$ then follows by interpolation, see Remark \ref{rem:interpolation}.
Finally, the case $\beta \in [0, 1]$, $\alpha \in [0, \beta]$  follows by elementary interpolation (that is, $a\leq b$ and $a\leq c$ implying $a\leq b^\lambda c^{1-\lambda}$ for $\lambda\in[0,1]$) between \eqref{eq:discrete-HK-bounds} with $\beta =1$ and the trivial bound
\begin{equs}
| \cP_{t}^{n}  f (x) -\cP_{t}^{n}  f (z) | \leq |x-z|^\alpha \|f\|_{\C^\alpha(\Pi_n)}.
\end{equs}
This finishes the proof.  
\end{proof}

\begin{lemma}     \label{lem:preservation-reg-T}
Let $\alpha \in [0,1]$. There exists a constant $N(\alpha)$ such that for all $\psi \in \C^\alpha(\T)$, $t \in [0,1]$,  we have 
\begin{equs}
\|\cP^n_t \psi\|_{\C^\alpha(\T)} \leq N  \| \psi\|_{\C^\alpha(\T)}.
\end{equs}
\end{lemma}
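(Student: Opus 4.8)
The plan is to first prove the bound on the grid $\Pi_n$ at grid times $t\in\Lambda_n$, where the probabilistic representation \eqref{eq:disc-semigroup} from Remark \ref{rem:random-walk} is available, and then to transfer it to all of $\T$ and all $t\in[0,1]$ by exploiting that $\cP^n_t\psi$ is built from its grid values by piecewise-linear interpolation, both in space via \eqref{eq:linear-interpolation} and in time via \eqref{eq:def-of-mu}. The $\bB$-part is essentially trivial throughout, so the content is entirely in the Hölder seminorm.

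For the grid estimate I would fix $t\in\Lambda_n$ and use $\cP^n_t\psi(x)=\E\tilde\psi(x+\widehat S^n_t)$ for $x\in\Pi_n$, where $\tilde\psi$ is the $1$-periodic extension of $\psi|_{\Pi_n}$. Being an average of values of $\tilde\psi$, this gives $\|\cP^n_t\psi\|_{\bB(\Pi_n)}\le\|\psi\|_{\bB(\Pi_n)}$ at once. For the seminorm I would write, for $x,x'\in\Pi_n$,
\begin{equ}
\cP^n_t\psi(x)-\cP^n_t\psi(x')=\E\big[\tilde\psi(x+\widehat S^n_t)-\tilde\psi(x'+\widehat S^n_t)\big],
\end{equ}
and note that for each realisation the two arguments differ by the fixed shift $x-x'$; since $\widehat S^n_t\in(2n)^{-1}\Z$, the reductions mod $1$ of $x+\widehat S^n_t$ and $x'+\widehat S^n_t$ are points of $\Pi_n$ at torus distance exactly $d(x,x')$ (translation invariance of $d$). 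Hence each increment is bounded by $[\psi]_{\C^\alpha(\T)}\,d(x,x')^\alpha$, and taking expectations yields $[\cP^n_t\psi]_{\C^\alpha(\Pi_n)}\le[\psi]_{\C^\alpha(\T)}$; i.e. the seminorm is preserved with constant $1$ on the grid.

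Next I would handle the two interpolations. By \eqref{eq:def-of-mu} each $\mu^n_j(\cdot)$ is affine on every $[t,t+h]$ with $t\in\Lambda_n$, so by \eqref{eq:Pn-def}--\eqref{eq:convolve-n-def} the map $s\mapsto\cP^n_s\psi$ is, on each such interval, the affine interpolation of $\cP^n_{\kappa_n(s)}\psi$ and $\cP^n_{\kappa_n(s)+h}\psi$; and by \eqref{eq:linear-interpolation}, for fixed $s$ the function $x\mapsto\cP^n_s\psi(x)$ is the piecewise-linear interpolation of its values on $\Pi_n$. Since a convex combination has $\bB$- and $\C^\alpha$-quantities bounded by the maximum of those of its endpoints, the time interpolation immediately propagates the grid-time bounds to all $s\in[0,1]$. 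It then remains only to prove the purely deterministic claim that for any $g:\Pi_n\to\R$, its piecewise-linear interpolation $\bar g$ on $\T$ satisfies $\|\bar g\|_{\bB(\T)}=\|g\|_{\bB(\Pi_n)}$ and $[\bar g]_{\C^\alpha(\T)}\le N[g]_{\C^\alpha(\Pi_n)}$ with an absolute constant $N$.

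I expect this last, elementary claim to carry the only genuine technical friction. The sup-norm part is clear because a piecewise-linear function attains its extrema at nodes. For the seminorm I would compare $\bar g(x)$ and $\bar g(x')$ by cases: when $x,x'$ lie in one grid cell, $\bar g$ is linear there and the increment is bounded by $[g]_{\C^\alpha}|x-x'|^\alpha$ using $|x-x'|\le(2n)^{-1}$ to trade the extra power; when they lie in different cells I would insert the enclosing gridpoints and combine the within-cell estimate with $d(\rho_n(x),\rho_n(x'))\le 2\,d(x,x')$, using periodicity of the grid to treat wraparound. This gives the claim with absolute $N$, and combining with the previous steps produces $\|\cP^n_t\psi\|_{\C^\alpha(\T)}\le\|\psi\|_{\bB(\T)}+N[\psi]_{\C^\alpha(\T)}\le N\|\psi\|_{\C^\alpha(\T)}$, uniformly in $n$ and $t$, as required.
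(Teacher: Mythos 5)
Your route is genuinely different from the paper's, and for the most part it is sound. The paper proves the bound only at the endpoint exponents: $\alpha=1$ by a case analysis on the piecewise linear structure (using the random walk representation \eqref{eq:disc-semigroup} at gridpoints) and $\alpha=0$ from the sup bound, and then gets $\alpha\in(0,1)$ via the abstract interpolation argument of Remark \ref{rem:interpolation}. You instead observe that the representation \eqref{eq:disc-semigroup} yields the $\C^\alpha$ seminorm bound at gridpoints \emph{directly for every} $\alpha\in[0,1]$ with constant $1$, because the torus metric is invariant under the common shift $\widehat S^n_t\in(2n)^{-1}\Z$; after that, only two elementary facts are needed (affinity of \eqref{eq:def-of-mu} in time, so $\cP^n_{t'}\psi$ is a convex combination of $\cP^n_{\kappa_n(t')}\psi$ and $\cP^n_{\kappa_n(t')+h}\psi$; and a purely deterministic lemma that piecewise linear interpolation of a grid function controls its $\C^\alpha(\T)$ norm). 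This bypasses the interpolation machinery entirely and is arguably cleaner.

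However, one step fails as written: the inequality $d(\rho_n(x),\rho_n(x'))\le 2\,d(x,x')$ is false, with no constant in place of $2$ making it true. Take $x=k/(2n)-\epsilon$ and $x'=k/(2n)+\epsilon$: then $d(x,x')=2\epsilon$ is arbitrarily small while $d(\rho_n(x),\rho_n(x'))=(2n)^{-1}$ stays fixed; moreover, in this configuration the endpoint terms $|\bar g(x)-\bar g(\rho_n(x))|$ are themselves of order $(2n)^{-\alpha}$, so the whole decomposition through $\rho_n(x),\rho_n(x')$ is too lossy, not just the middle term. Note that this node-straddling, adjacent-cell configuration is exactly the delicate case (it is Case 2 in the paper's own proof). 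The repair is immediate: insert the gridpoints lying on the geodesic arc from $x$ to $x'$ rather than $\rho_n(x),\rho_n(x')$. For adjacent cells this is the single shared node $v$, which satisfies $|x-v|\le d(x,x')$ and $|v-x'|\le d(x,x')$, so two within-cell estimates (each valid by your power-trading argument) give the bound with constant $2$; for non-adjacent cells take the two nodes $v,v'$ on the arc facing each other, so that $d(x,v)$, $d(v,v')$, $d(v',x')$ are all at most $d(x,x')$, giving the bound with constant $3$. With this correction your argument is complete.
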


\begin{proof}

One can easily see the estimate 
\begin{equs}      \label{eq:sup-sup}
\|\cP^n_t \psi\|_{\mathbb{B}(\T)} \leq N  \| \psi\|_{\mathbb{B}(\T)},
\end{equs}
so we focus on proving that for all $t \in [0,1]$ and $x, y \in \T$, we have 

\begin{equs}
|\cP^n_t \psi(x)- \cP^n_t \psi(y)| \leq N |x-y|^\alpha \| \psi\|_{\C^\alpha(\T)}.
\end{equs}
Let us first prove the claim with $\alpha=1$.  In addition, let us assume for now that $t \in \Lambda_n$. There are three cases: 

\emph{Case 1:  $|x-y| < 1/2n$ and $\rho_n(x)=\rho_n(y)$.} 
 In this case, by \eqref{eq:linear-interpolation} it follows that 
\begin{equs}
|\cP^n_t \psi (x)-\cP^n_t \psi (y)| & = \big|2n (x-y) \big( \cP^n_t \psi (\rho_n(x)+1/2n)- \cP^n_t \psi (\rho_n(x)) \big) \big|
\\
  & \lesssim  |x-y| \|\psi\|_{\C^1(\T)},
\end{equs}
where we have used the representation \eqref{eq:disc-semigroup}. 

\emph{Case 2:  $|x-y| < 1/2n$ and $\rho_n(x) \neq \rho_n(y)$.} In this case, let us assume without loss of generality that $\rho_n(y)=  \rho_n(x)+1/2n$. By \eqref{eq:linear-interpolation}, we see that 
\begin{equs}
\cP_t^n \psi (x)= \cP_t^n \psi ( \rho_n(x) ) + 2n (x-\rho_n(x)) \big( \cP_t^n \psi ( \rho_n(x)+1/2n ) - \cP_t^n \psi ( \rho_n(x) ) \big),
\end{equs} 
which implies that 
\begin{equs}
 | \cP_t^n \psi (x)- \cP_t^n \psi ( \rho_n(y) ) |  
&= | \cP_t^n \psi (x)- \cP_t^n \psi ( \rho_n(x) +1/2n) |  
\\
 &=\big|  \big(2n (x-\rho_n(x))-1  \big)  \big( \cP_t^n \psi ( \rho_n(x)+1/2n ) - \cP_t^n \psi ( \rho_n(x) ) \big) \big|
\\
& \leq | x- \rho_n(x) -1/2n| \|\psi\|_{\C^1(\T)}
\\
&= |x- \rho_n(y)| \| \psi\|_{\C^1(\T)}. 
\end{equs}
We also have 
\begin{equs}
| \cP_t^n \psi ( \rho_n(y) ) -\cP_t^n \psi (y)|  &= \big|  \big(2n (y-\rho_n(y)) \big)  \big( \cP_t^n \psi ( \rho_n(y)+1/2n ) - \cP_t^n \psi ( \rho_n(y ) \big) \big|
\\
& \leq  |y-\rho_n(y) |  \| \psi\|_{\C^1(\T)}. 
\end{equs}
Consequently, 
\begin{equs}
| \cP_t^n \psi (x) -\cP_t^n \psi (y)|  \leq&   | \cP_t^n \psi ( x) -\cP_t^n \psi ( \rho_n(y) )|  + | \cP_t^n \psi ( \rho_n(y) ) -\cP_t^n \psi (y)|  
\\
\leq & \big( |x- \rho_n(y)|+|y-\rho_n(y) | \big) \| \psi\|_{\C^1(\T)}
\\
=& |x-y| \| \psi\|_{\C^1(\T)}.
\end{equs}

\emph{Case 3:  $|x-y| \geq  1/2n$}. In this case, we have 
\begin{equs}
&| \cP_t^n \psi (x) -\cP_t^n \psi (y)| 
\\
&\leq  | \cP_t^n \psi (x) -\cP_t^n \psi (\rho_n(x))| +| \cP_t^n \psi (\rho_n(x)) -\cP_t^n \psi (\rho_n(y))| +| \cP_t^n \psi (y) -\cP_t^n \psi (\rho_n(y))| 
\\
& \lesssim   \  \big( 1/2n+ | \rho_n(x)-\rho_n(y) |\big)\| \psi\|_{\C^1(\T)}
\\
& \lesssim  |x-y| \| \psi\|_{\C^1(\T)},
\end{equs}
where we have used the results from the case $|x-y|< 1/2n$ for the first and the third term, the representation \eqref{eq:disc-semigroup} for the second term, and of course the fact that $|x-y| > 1/2n$.

Hence, we have proved the claim for $t \in \Lambda_n$. If $t \geq 0$,  then the claim follows from the case  $t \in \Lambda_n$ virtue of the identity  
\begin{equs}
 \cP_t^n \psi (x)  = \cP_{\kappa_n(t)}^n \psi (x) +h^{-1}(t-\kappa_n(t)) \big(  \cP_{\kappa_n(t)+h}^n \psi (x) - \cP_{\kappa_n(t)}^n \psi (x) \big),
\end{equs}
see
\eqref{eq:def-of-mu}.

To summarise, we have shown the desired inequality with $\alpha =1$ for all $t \geq 0$ and $x, y \in \T$. 
From \eqref{eq:sup-sup}, it also follows that
\begin{equs}
| \cP_t^n \psi (x) -\cP_t^n \psi (y)|  \lesssim \| \psi\|_{\mathbb{B}(\T)},
\end{equs}
for all $t \geq 0$ and $x, y \in \T$, which is the claim for $\alpha=0$.  Finally,  the case $\alpha \in (0,1)$ follows by interpolation, see Remark \ref{rem:interpolation}.
\end{proof}

\begin{lemma}     \label{lem:terms-from-IC}
Let $\beta \in [0, 1]$, $ \alpha \in [0, \beta]$. The following hold: 
\begin{enumerate}
\item There  exists  a constant $N$ such that for all $\psi \in \C^\alpha(\T)$,  $n  \in \mathbb{ N}$, $r \geq h $, $y \in \T$,  $z\in\{y,\rho_n(y)\}$
\begin{equs}        \label{eq:disc-HK-applied}
|\cP_{r }^{n} \psi(y)- 
\cP_{\kappa_n(r)}^{n} \psi(z)| &  \leq N \big(\log(2n) \big)^{(\beta-\alpha)/2} n^{-\beta} r^{(\alpha- \beta)/2} \| \psi\|_{\C^\alpha(\T)}.
\end{equs} 

\item There exists  a constant $N$ such that for all $\psi \in \C^\alpha (\T)$,  $n  \in \mathbb{ N}$, $r \in [0, h] $, $y \in \T$, $z\in\{y,\rho_n(y)\}$
\begin{equs}        \label{eq:disc-HK-near-zero}
|\cP_{r }^{n} \psi(y)- 
\cP_0^{n} \psi(z)| &  \leq N n^{-\alpha}  \| \psi\|_{\C^\alpha(\T)}.
\end{equs} 
\end{enumerate}
\end{lemma}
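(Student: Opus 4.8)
The plan is to prove both bounds by splitting each increment into a purely temporal and a purely spatial part, and then reducing everything to differences of $\cP^n_t\psi$ evaluated at neighbouring gridpoints, where Lemma~\ref{lem:Discrete-HK-bounds} can be applied directly. Throughout I use that $\cP^n_t\psi$ depends on $\psi$ only through its restriction to $\Pi_n$, so that $\|\psi\|_{\C^\alpha(\Pi_n)}\leq\|\psi\|_{\C^\alpha(\T)}$ may be used freely, and two interpolation facts: by \eqref{eq:linear-interpolation} the map $x\mapsto\cP^n_t\psi(x)$ is, for each fixed $t$, the piecewise linear interpolation of its own values on $\Pi_n$, while by \eqref{eq:def-of-mu} the map $t\mapsto\cP^n_t\psi(x)$ is piecewise linear between neighbouring points of $\Lambda_n$. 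These let me replace the general point $y\in\T$ by the gridpoints $\rho_n(y),\rho_n(y)+(2n)^{-1}$ and the general time $r$ by $\kappa_n(r),\kappa_n(r)+h$, at the cost only of convex-combination coefficients in $[0,1]$.

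For the first bound \eqref{eq:disc-HK-applied} I write
\[
\cP^n_r\psi(y)-\cP^n_{\kappa_n(r)}\psi(z)=\big(\cP^n_r\psi(y)-\cP^n_{\kappa_n(r)}\psi(y)\big)+\big(\cP^n_{\kappa_n(r)}\psi(y)-\cP^n_{\kappa_n(r)}\psi(z)\big).
\]
The second (spatial) term vanishes if $z=y$, and if $z=\rho_n(y)$ it is controlled, after interpolating in $x$, by $|\cP^n_{\kappa_n(r)}\psi(\rho_n(y)+(2n)^{-1})-\cP^n_{\kappa_n(r)}\psi(\rho_n(y))|$, to which Lemma~\ref{lem:Discrete-HK-bounds} applies with spatial scale $(2n)^{-1}$ and time $\kappa_n(r)\in\Lambda_n\cap(0,1]$. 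For the first (temporal) term, linearity in $t$ bounds it by the full one-step increment $|\cP^n_{\kappa_n(r)+h}\psi(y)-\cP^n_{\kappa_n(r)}\psi(y)|$, and after interpolating in $x$ it suffices to bound this at a gridpoint $x\in\Pi_n$. There I use the semigroup identity \eqref{eq:P^n-rep}, namely $\cP^n_{\kappa_n(r)+h}\psi(x)-\cP^n_{\kappa_n(r)}\psi(x)=h\Delta_n\cP^n_{\kappa_n(r)}\psi(x)$, together with $h(2n)^2=c$, to rewrite the increment as $c$ times a \emph{second} spatial difference of $\cP^n_{\kappa_n(r)}\psi$ over scale $(2n)^{-1}$; bounding its two first differences by Lemma~\ref{lem:Discrete-HK-bounds} yields the same estimate as for the spatial term. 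Both terms carry the factor $\kappa_n(r)^{(\alpha-\beta)/2}$, which I convert into $r^{(\alpha-\beta)/2}$ using the elementary fact that $r\geq h$ forces $\kappa_n(r)\geq r/2$.

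The second bound \eqref{eq:disc-HK-near-zero} is analogous but simpler, since there is no heat-kernel decay to capture. I split $\cP^n_r\psi(y)-\cP^n_0\psi(z)$ into a temporal increment $\cP^n_r\psi(y)-\cP^n_0\psi(y)$ and a spatial increment $\cP^n_0\psi(y)-\cP^n_0\psi(z)$. Since $\cP^n_0=\id$ on $\Pi_n$ (as seen from \eqref{eq:disc-semigroup} at $t=0$), the temporal increment is, after the same reductions, $c$ times a second difference of $\psi$ itself over scale $(2n)^{-1}$, bounded by $Nn^{-\alpha}\|\psi\|_{\C^\alpha}$ directly from the definition of the H\"older seminorm. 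The spatial increment is handled by Lemma~\ref{lem:preservation-reg-T} (valid at $t=0$), giving $|\cP^n_0\psi(y)-\cP^n_0\psi(\rho_n(y))|\leq N\|\psi\|_{\C^\alpha(\T)}|y-\rho_n(y)|^\alpha\leq Nn^{-\alpha}\|\psi\|_{\C^\alpha(\T)}$.

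The only genuinely non-routine step will be the temporal one: the estimates of Lemma~\ref{lem:Discrete-HK-bounds} are purely spatial, so the time increment must be traded for a spatial one. This is exactly what the discrete parabolic relation $\cP^n_h-\id=h\Delta_n$ provides, with the CFL scaling $h(2n)^2=c$ ensuring that the resulting second difference is measured at the grid scale $(2n)^{-1}$ and hence falls directly within the scope of Lemma~\ref{lem:Discrete-HK-bounds}. Everything else is bookkeeping of convex combinations coming from the piecewise-linear structure together with the inequality $\kappa_n(r)\geq r/2$.
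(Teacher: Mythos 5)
Your proof is correct and follows essentially the same route as the paper's: the same temporal-plus-spatial decomposition, the same reduction to gridpoint differences via the piecewise-linear structure, the key identity $\cP^n_{\kappa_n(r)+h}-\cP^n_{\kappa_n(r)}=h\Delta_n\cP^n_{\kappa_n(r)}$ turning the time increment into a second spatial difference controlled by Lemma~\ref{lem:Discrete-HK-bounds}, and the conversion $\kappa_n(r)\geq r/2$ for $r\geq h$. The only (welcome) difference is that you spell out the proof of \eqref{eq:disc-HK-near-zero}, which the paper leaves to the reader, and your argument for it is also correct.
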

\begin{proof}
We have 
\begin{equs}
|\cP_{r }^{n} \psi(y)- 
\cP_{\kappa_n(r)}^{n} \psi(\rho_n(y))| &  \leq|\cP_{r }^{n} \psi(y)- \cP_{\kappa_n(r)}^{n} \psi(y)|
\\
&\qquad +
|\cP_{\kappa_n(r)}^{n} \psi(y)- \cP_{\kappa_n(r)}^{n} \psi(\rho_n(y))|.
\end{equs} 
For the second term, keeping in mind the definition of $e^n$ in \eqref{eq:linear-interpolation}, we see that 
\begin{equs}
 |\cP_{\kappa_n(r)}^{n} &\psi(y)- \cP_{\kappa_n(r)}^{n} \psi(\rho_n(y))|
\\
&=  \Big| 2n \big(y-\rho_n(y)\big) \Big( \cP^{n}_{\kappa_n(r)} \psi \big( \rho_n(y)+ (2n)^{-1}\big) - \cP^{n}_{\kappa_n(r)} \psi ( \rho_n(y)) \Big)  \Big| 
\\
&\lesssim \big(\log(2n) \big)^{(\beta-\alpha)/2} n^{-\beta} \big(\kappa_n(r) \big)^{(\alpha- \beta)/2} \| \psi\|_{\C^\alpha(\T)} \label{eq:idk50}
\end{equs}
where we have used  Lemma \ref{lem:Discrete-HK-bounds} for the inequality.  Similarly, by \eqref{eq:def-of-mu}, we see that 
\begin{equs}
|\cP_{r }^{n} \psi(y)- 
\cP_{\kappa_n(r)}^{n} \psi(y)| 
&
= h^{-1} (r-\kappa_n(r))   | \cP_{\kappa_n(r)+h} \psi (y)- \cP_{\kappa_n(r)} \psi (y)| 
\\
& \leq | \cP_{\kappa_n(r)+h} \psi (y)- \cP_{\kappa_n(r)} \psi (y)| 
\\
& \lesssim   | \cP_{\kappa_n(r)+h} \psi (y)- \cP_{\kappa_n(r)+h} \psi ( \rho_n(y))| 
\\
 &\quad +| \cP_{\kappa_n(r)+h} \psi (\rho_n(y) )- \cP_{\kappa_n(r)} \psi ( \rho_n(y))| 
 \\
  &\quad +| \cP_{\kappa_n(r)} \psi (\rho_n(y) )- \cP_{\kappa_n(r)} \psi ( y)| 
\end{equs}
The first and the third term at the right hand side can be bounded by the right hand side of \eqref{eq:disc-HK-applied} because of \eqref{eq:idk50}.  For the second term, we have by \eqref{eq:P^n-rep}  that 
\begin{equs}
 | \cP_{\kappa_n(r)+h} \psi (\rho_n(y))- \cP_{\kappa_n(r)} \psi (\rho_n(y))|  = | h \Delta_n \cP_{\kappa_n(r)} \psi (\rho_n(y)) | , 
 \end{equs}
 which again can be estimated by the right hand side of \eqref{eq:disc-HK-applied} by virtue of Lemma \ref{lem:Discrete-HK-bounds}. This shows that 
 \begin{equs}
 |\cP_{r }^{n} \psi(y)- 
\cP_{\kappa_n(r)}^{n} \psi(y)|  \lesssim  \big(\log(2n) \big)^{(\beta-\alpha)/2} n^{-\beta} r^{(\alpha- \beta)/2} \| \psi\|_{\C^\alpha(\T)},
 \end{equs}
 which proves \eqref{eq:disc-HK-applied} with $z = y $ and combined with \eqref{eq:idk50} gives \eqref{eq:disc-HK-applied} with $z=\rho_n(y)$. 
 
 The proof of \eqref{eq:disc-HK-near-zero} is more straightforward and is left to the reader. 
\end{proof}

The following lemma, a key error estimate between the continuous and the discrete heat kernels, is similar to \cite[Lemma~3.3]{Gy}, where an estimate of the form $\eqref{eq:Pn-P}$ is proved for the Dirichlet setting.  Our version is a bit more flexible by allowing $\beta \in [0, 2]$ in contrast to \cite[Lemma~3.3]{Gy} where  $\beta \in [0,1]$. 
\begin{lemma}   \label{lem:Pn-P}
Let $\beta \in [0, 2]$. Then there exists a constant $N(\beta , c)$ such that 
for all $t \in  [h, 1]$, $x \in \T$ one has the bound
\begin{equs}\label{eq:Pn-P}
\| p_t(x-\cdot)-p^{n}_{\kappa_n(t)}(x,\cdot) \|^2_{L_2(\T)} \leq  N n^{-\beta} t^{-(\beta+1)/2}.
\end{equs} 
\end{lemma}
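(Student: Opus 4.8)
The plan is to compare the two kernels mode by mode in the spectral picture, reducing every error to the elementary summation bound of Proposition~\ref{prop:summation}. Throughout set $s=\kappa_n(t)$ and $m=sh^{-1}\in\N$; since $t\in[h,1]$ we have $s\ge h$ and $s\le t\le 2s$, so powers of $s$ and of $t$ are interchangeable up to constants, and $n^2s\gtrsim1$ always holds. I would first split
\begin{equation*}
p_t(x-\cdot)-p^n_{\kappa_n(t)}(x,\cdot)=\big(p_t(x-\cdot)-p_s(x-\cdot)\big)+\big(p_s(x-\cdot)-p^n_s(x,\cdot)\big),
\end{equation*}
so that the first bracket measures only the temporal discretisation of the \emph{continuous} kernel and the second only the spatial discretisation at a grid time. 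For the first bracket, Parseval in the orthonormal basis $(e_k)$ gives $\sum_{k\in\Z}e^{-8\pi^2k^2s}(1-e^{-4\pi^2k^2(t-s)})^2$; bounding $(1-e^{-a})^2\le a^{\beta}$ (valid since $\beta/2\in[0,1]$) and $t-s\le h\lesssim n^{-2}$ turns this into $\lesssim n^{-2\beta}\sum_k|k|^{2\beta}e^{-8\pi^2k^2s}\lesssim n^{-2\beta}s^{-\beta-1/2}\lesssim n^{-\beta}t^{-(\beta+1)/2}$, the last step because $(n^2s)^{-\beta/2}\lesssim1$.

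For the second bracket I would insert the rounded continuous kernel and write it as $\big(p_s(x-\cdot)-p_s(x-\rho_n(\cdot))\big)+\big(p_s(x-\rho_n(\cdot))-p^n_s(x,\cdot)\big)=:C_1+C_2$, using that $p^n_s(x,y)$ depends on $y$ only through $\rho_n(y)$ and that $e^n_j(\rho_n(y))=e_j(\rho_n(y))$. The term $C_1$ is a pure rounding error of a smooth kernel: by the fundamental theorem of calculus on each grid cell and Cauchy--Schwarz one gets $\|C_1\|_{L_2(\T)}^2\le(2n)^{-2}\|\partial p_s\|_{L_2(\T)}^2\lesssim n^{-2}s^{-3/2}\lesssim n^{-\beta}t^{-(\beta+1)/2}$. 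Since $C_2$ is a function of $\rho_n(y)$ only, its squared $L_2(\T)$-norm equals $(2n)^{-1}\sum_{z\in\Pi_n}|p_s(x-z)-p^n_s(x,z)|^2$, where the discrete orthogonality \eqref{eq:orthogonal-n} applies. Expanding $p_s(x-z)$ for $z\in\Pi_n$ and folding frequencies modulo $2n$ (aliasing), Parseval on $\Pi_n$ yields $\sum_{j=-n}^{n-1}|c_j(x)-\mu^n_j(s)e^n_j(x)|^2$ with $c_j(x)=\sum_{\ell\equiv j\,(2n)}e^{-4\pi^2\ell^2s}e_\ell(x)$.

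This summand splits into three pieces: the eigenvalue discrepancy $D_j:=e^{-4\pi^2j^2s}-(1+h\lambda^n_j)^m$, the interpolation error $\mu^n_j(s)(e_j(x)-e^n_j(x))$, and the aliasing tail $\sum_{\ell\equiv j\,(2n),\,\ell\ne j}e^{-4\pi^2\ell^2s}e_\ell(x)$. For the interpolation error, $|e_j(x)-e^n_j(x)|\lesssim j^2n^{-2}$ (linear interpolation of a $\cC^2$ function) and $|\mu^n_j(s)|\le e^{-\delta sj^2}$ by \eqref{eq:Pn-exponential}, so its square sums to $\lesssim n^{-4}s^{-5/2}$ via Proposition~\ref{prop:summation}. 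For the aliasing tail, every contributing $\ell$ satisfies $(j+2nk)^2\ge n^2k^2$, so each tail is $\lesssim e^{-4\pi^2n^2s}$ uniformly in $j$, and summing $2n$ such squares gives $\lesssim n\,e^{-8\pi^2n^2s}$; both expressions are $\lesssim n^{-\beta}t^{-(\beta+1)/2}$ because $n^2s\gtrsim1$ forces $u\mapsto u^{(\beta+1)/2}e^{-8\pi^2u}$ to be bounded.

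I expect the main obstacle to be the eigenvalue discrepancy $\sum_{j=-n}^{n-1}|D_j|^2$, which also reappears as the mode-wise difference of the two semigroup symbols. I would bound $|D_j|$ by splitting $D_j=(e^{\lambda_js}-e^{\lambda^n_js})+(e^{\lambda^n_js}-(1+h\lambda^n_j)^m)$. The first difference is controlled using $|\lambda_j-\lambda^n_j|=|\lambda_j|(1-\gamma^n_j)\lesssim j^4n^{-2}$ from \eqref{eq:bound1-gamma} together with $\lambda_j\le\lambda^n_j\le0$ and \eqref{eq:sunday}, giving $\lesssim j^4n^{-2}s\,e^{-16j^2s}$. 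For the second, the telescoping identity $X^m-Y^m=(X-Y)\sum_{k=0}^{m-1}X^kY^{m-1-k}$ with $X=e^{h\lambda^n_j}$, $Y=1+h\lambda^n_j$, the estimate $|e^a-(1+a)|\lesssim a^2$, and the key bound $\max(e^a,|1+a|)\le e^{\delta_0a}$ for $a=h\lambda^n_j\in[-4c,0]$ (which follows from $\delta_0\le1$ and the claim $|1-x|\le e^{-\delta_0x}$ in the proof of \eqref{eq:Pn-exponential}) yield $\lesssim hs\,j^4e^{-\delta'j^2s}$. Since $h\lesssim n^{-2}$, both pieces are $\lesssim n^{-2}s\,j^4e^{-\delta''j^2s}$, whence $\sum_j|D_j|^2\lesssim n^{-4}s^2\sum_jj^8e^{-2\delta''j^2s}\lesssim n^{-4}s^{-5/2}\lesssim n^{-\beta}t^{-(\beta+1)/2}$ for $\beta\le2$. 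Collecting the four contributions proves \eqref{eq:Pn-P}. The delicate points are the uniform-in-$j$ control of $\max(e^a,|1+a|)$ when $1+h\lambda^n_j$ may be negative, and the bookkeeping ensuring every error carries at least the extra factor $(n^2s)^{-\text{(positive)}}$ needed to absorb the gap between the crude exponents $n^{-2},n^{-4}$ and the target $n^{-\beta}$.
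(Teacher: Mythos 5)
Your proof is correct, and although it shares the paper's spectral philosophy (mode-by-mode comparison reduced to Proposition~\ref{prop:summation}, with excess powers of $n$ absorbed via $n^2\kappa_n(t)\gtrsim 1$), the decomposition and the key technical devices are genuinely different. The paper first reduces to $\beta=2$, inserts the semi-discrete kernel $\tilde p^n_t(x,y)=\sum_{j=-n}^{n-1}e^{\lambda^n_j t}e^n_j(x)e_j(\rho_n(y))$, estimates $p_t-\tilde p^n_t$ by four spectral terms (high frequencies $|j|\ge n$; $e^{\lambda_j t}$ vs.\ $e^{\lambda^n_j t}$; $e_j(x)$ vs.\ $e^n_j(x)$; $e_j$ vs.\ $e_j\circ\rho_n$, this last handled inside $L_2(\T)$ via an orthogonality claim for the family $e_j-e_j\circ\rho_n$), and then estimates $\tilde p^n_t-p^n_{\kappa_n(t)}$ by a time-rounding term plus the Euler-versus-exponential term, which it splits over the frequency sets $J^1_n$ (where $h\lambda^n_j\le -1/2$, killed by exponential smallness) and $J^2_n$ (where the expansion of $\ln(1+h\lambda^n_j)$ is available). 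You instead round time at the level of the \emph{continuous} kernel, treat the $y$-rounding in physical space through $\|\partial p_s\|_{L_2(\T)}$, and compare the folded (aliased) continuous kernel with $p^n_{\kappa_n(t)}$ by discrete Parseval on $\Pi_n$: your aliasing tail plays the role of the paper's high-frequency term $I^1_t$, and your telescoping identity $X^m-Y^m=(X-Y)\sum_{k=0}^{m-1}X^kY^{m-1-k}$ combined with $\max(e^a,|1+a|)\le e^{\delta_0 a}$ replaces the paper's $J^1_n/J^2_n$ case split and logarithm expansion. That last device is the nicest point of divergence: it treats all frequencies uniformly, using only the CFL range $h\lambda^n_j\in[-4c,0]$ and the bound $|1-x|\le e^{-\delta_0 x}$ already established for \eqref{eq:Pn-exponential}, whereas the logarithm route must quarantine the regime where $1+h\lambda^n_j$ is small or negative. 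The trade-off in the other direction is that the paper never leaves continuous $L_2(\T)$ orthogonality and so avoids aliasing bookkeeping entirely, and its one-time reduction to $\beta=2$ spares it from re-running the absorption argument $(n^2 s)^{-\mathrm{pos}}\lesssim 1$ term by term, which your write-up does (correctly) for each of the five contributions.
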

\begin{proof}
Note that since $t\gtrsim n^{-2}$,  it suffices to prove \eqref{eq:Pn-P} in the $\beta=2$ case.
Let us start by defining
\begin{equs}
\tilde p^{n}_t(x,y) = \sum_{j=-n}^{n-1} e^{\lambda^n_jt} e^n_j(x) e_j(\rho_n(y)).
\end{equs}
Writing
\begin{equ}
\|p_t(x-\cdot)-p^{n}_t(x,\cdot)\|_{L_2(\T)} ^2 \leq 2 \|p_t(x-\cdot)-\tilde p^n_t(x,\cdot)\|_{L_2(\T)} ^2 +2 \|\tilde p^n_t(x,\cdot)-p^{n}_{\kappa_n(t)}(x,\cdot)\|_{L_2(\T)} ^2,
\label{eq:Pn-P-triangle}
\end{equ}
we start with an estimate for the first term at the right hand side of \eqref{eq:Pn-P-triangle}.  We write
\begin{equs}
\|p_t(x-\cdot)-\tilde p^n_t(x,\cdot)\|_{L_2(\T)} ^2 \leq 4 \sum_{i=1}^4 I^i_t (x),
\end{equs}
with 
\begin{equs}
I^1_t(x) & = \| \sum _{ \{ j  \leq -n-1\}\ \cup  \{ j \geq n\}} e^{\lambda_j t } e_j(x) \overline{ e_j }(\cdot)\| _{L_2(\T)}^2,
\\
I^2_t(x) & = \| \sum _{ -n \leq j \leq n-1}(  e^{\lambda_j t }-e^{\lambda^n _j t } ) e_j(x) \overline{ e_j }(\cdot) \| _{L_2(\T)}^2,
\\
I^3_t(x) & = \| \sum _{ -n \leq j \leq n-1}e^{\lambda^n _j t } \big( e_j(x) -e^n_j(x)\big)  \overline{ e_j } (\cdot)\| _{L_2(\T)}^2,
\\
I^4_t(x) & = \|  \sum _{ -n \leq j \leq n-1} e^{\lambda^n _j t } e^n_j(x) \big(  \overline{ e_j } - \overline{e_j \circ \rho_n}  \big)(\cdot) \| _{L_2(\T)}^2.
\end{equs}
We now show that each $I^i_t(x)$ is bounded by the right-hand side of \eqref{eq:Pn-P}.
By the orthonormality of $e_j$ we see that
\begin{equs}
I^1_t(x) & =    \sum _{ \{ j  \leq -n-1\}\ \cup  \{ j \geq n\}} e^{2\lambda_j t } 
 \lesssim n^{-2} \sum _{|j| \geq 2n } j^2 e^{2\lambda_j t }.
   \label{eq:estimate-I1}
\end{equs}
Since $\lambda_j=-4\pi^2j^2$, we can use Proposition \ref{prop:summation} to conclude the claimed bound.
Next recall that $\lambda_j \leq \lambda^n_j \leq 0$ and that $\lambda^n_j \leq -16j^2$ by \eqref{eq:sunday}, we get that
\begin{equs}
I^2_t(x)  & \leq \sum _{ -n \leq j \leq n-1} \big( e^{\lambda_j t }  -  e^{\lambda^n _j t }  \big)^2= \sum _{ -n \leq j \leq n-1}   \big(  e^{\lambda^n_j t }  ( 1-  e^{-(\lambda^n _j-\lambda_j) t } ) \big)^2 
\\
&  \leq \sum_{ -n \leq j \leq n-1} \big(  e^{\lambda^n_j t }  (\lambda^n_j-\lambda_j) t\big)^2  
\lesssim  \sum_{ -n \leq j \leq n-1}  e^{-32j^2  t }   j^4 (\gamma ^n_j-1)^2 t^2.
\end{equs}
By \eqref{eq:bound1-gamma} and Proposition \ref{prop:summation} we get
\begin{equs}
I^2_t(x) \lesssim \sum_{ -n \leq j \leq n-1}  e^{-32j^2  t } j^8 n^{-4} t^2&  \lesssim n ^{-4} t^{2-9/2}.
  \label{eq:estimate-I2}
\end{equs}
Using $n^{-2}\lesssim t$ again, we get the required bound. Next, for $I^3_t(x) $ we can use that 
\begin{equs}
| e_j(x) -e^n_j(x) |&\leq| e_j(x) -e_j(\rho_n(x)) |+| e_j^n(\rho_n(x)) -e^n_j(x) |\leq n^{-1}\big(\|e_j\|_{\cC^1}+\|e_j^n\|_{\cC^1}\big)\lesssim jn^{-1}.
\end{equs}
Therefore,
\begin{equs}
I^3_t(x) &  =   \sum_{ -n \leq j \leq n-1}e^{2\lambda^n _j t } | e_j(x) -e^n_j(x) |^2   \lesssim \sum_{ -n \leq j \leq n-1} e^{-32j^2 t }  j^2 n^{-2}.
 \label{eq:estimate-I3}
\end{equs}
As usual, Proposition \ref{prop:summation} implies the claimed bound.
Before estimating $I^4_t(x)$, we claim that 
 for  $j, \ell \in \{-n, ..., n-1\}$, with $j \neq \ell$, the functions $e_j-e_j \circ \rho_n$ and $e_\ell-e_\ell \circ \rho_n $ are orthogonal in $L_2(\T)$. Indeed, by the orthogonality of $(e_j)_{j \in \mathbb{Z}}$ and the orthogonality of $(e^n_\ell)_{-n \leq \ell \leq n-1 }$, we have that 
 \begin{equs}
 ( e_j-e_j \circ \rho_n, e_\ell-e_\ell\circ \rho_n)_{L_2(\T)} = -  ( e_j, e_\ell\circ \rho_n)_{L_2(\T)}-  ( e_j \circ \rho_n, e_\ell)_{L_2(\T)}.
 \end{equs}
 Then we see that for $j, n \in \{ -n, ..., n-1\}$, $j \neq \ell$, we have 
 \begin{equs}
 ( e_j, e_\ell\circ \rho_n)_{L_2(\T)}& = \sum_{k=0}^{2n} \int_{k/2n}^{(k+1)/2n} e^{i 2\pi  j y }  e^{-i2\pi  \ell k/2n} \, dy  = \frac{(e^{i 2\pi j /2n} -1)}{i 2 \pi  j} \sum_{k=0}^{2n-1}  e^{i 2\pi  (j-\ell) k/2n } 
 \\
 & = \frac{(e^{i 2\pi j /2n} -1)}{i 2 \pi  j} \frac{ (1-  e^{i 2\pi  (j-\ell) }) }{(1- e^{i 2\pi  (j-\ell)/2n  }) } =0,
 \end{equs}
 which shows the claim. Consequently, 
\begin{equs}
I^{4 }_t(x)  
&  =  \sum _{ -n \leq j \leq n-1 } e^{2\lambda^n _j t } \| e_j-e_j \circ \rho_n\| _{L_2(\T)}^2 \lesssim\sum_{ -n \leq j \leq n-1} e^{-32j^2 t } j^2n^{-2},
 \label{eq:estimate-I4}
\end{equs}
giving the claimed bound as before. Putting \eqref{eq:estimate-I1}-\eqref{eq:estimate-I4} together, we conclude that 
\begin{equs}
\|p_t(x-\cdot)-\tilde p^n_t(x,\cdot)\|_{L_2(\T)} ^2 \lesssim n^{-\beta } t^{-(1+\beta)/2}. \label{eq:estimate-P-Rn}
\end{equs}
Next we see that 
\begin{equs}
\|\tilde p^n_t(x,\cdot)-p^{n}_{\kappa_n(t)}(x,\cdot)\|_{L_2(\T)} ^2 &  \lesssim  \sum_{-n \leq j \leq n-1} | e^{\lambda^n_j t} - e^{\lambda^n_j \kappa_n(t)} |^ 2
\\
& \qquad + \sum_{-n \leq j \leq n-1} | e^{\lambda^n_j \kappa_n (t) } - (1+ h \lambda^n_j)^{\kappa_n(t) h^{-1}}  |^ 2.\label{eq:another-triangle}
\end{equs}
As before, we show that both terms are bounded by the right-hand side of \eqref{eq:Pn-P}.
Since $t\geq h$ implies $\kappa_n(t)\geq t/2$, we have
\begin{equs}
 \sum_{-n \leq j \leq n-1} | e^{\lambda^n_j t} - e^{\lambda^n_j \kappa_n(t)} |^ 2   & \leq   \sum_{-n \leq j \leq n-1} \big(  |t-\kappa_n(t)|\lambda^n_j e^{\lambda^n_j \kappa_n(t)} \big) ^2 
 \\
  & \lesssim   n^{-4} \sum_{-n \leq j \leq n-1} j^4 e^{-16j^2 t}.
    \label{eq:kernel-time-dis-1}
\end{equs}
Proposition \ref{prop:summation} and $n^{-2}\lesssim t$ yields the claimed bound.
For the other term,  we have 
 \begin{equs}      
\sum_{-n \leq j \leq n-1} | e^{\lambda^n_j \kappa_n (t) } - (1+ h \lambda^n_j)^{\kappa_n(t) h^{-1}} |^ 2 & \lesssim    \sum_{j \in J^1_n } | e^{\lambda^n_j \kappa_n (t) }  |^ 2   
+ \sum_{ j \in J^1_n } | (1+ h \lambda^n_j)^{\kappa_n (t) h^{-1}} |^ 2 
\\
&\quad +  \sum_{ j \in J^2_n } | e^{\lambda^n_j \kappa_n (t) } - (1+ h \lambda^n_j)^{\kappa_n(t) h^{-1}} |^ 2   ,         \label{eq:idk-how-to-call-it}
\end{equs}
where 
\begin{equs}
J^1_n & = \{ j \in \{-n , ..., n-1\} : h \lambda^n_j \in [-( 4c \vee 1/2), -1/2] \}, 
\\
 J^2_n & = \{ j \in \{-n , ..., n-1\} : h \lambda^n_j \in (  -1/2, 0] \}. 
\end{equs}
We have, using $\kappa_n(t)\geq t/2$ as before,
\begin{equs}
 \sum_{j \in J^1_n } | e^{\lambda^n_j \kappa_n (t) }  |^ 2 
 \leq \sum_{j \in J^1_n }e^{-t/(2h)}\lesssim n(t/h)^{-3/2}\lesssim n^{-2}t^{-3/2},
\end{equs}
and similarly, by \eqref{eq:Pn-exponential},
\begin{equs}
 \sum_{ j \in J^1_n } | (1+ h \lambda^n_j)^{\kappa_n (t) h^{-1}} |^ 2 \lesssim   n^{-2} t  ^{-3/2}.
\end{equs}
For the last term on the right-hand side of \eqref{eq:idk-how-to-call-it} we use the elementary inequalities
$0 \leq x-\ln(1+x) \leq x^2$ for all $x \in [-1/2, 0]$,
and $|1-e^y|\leq |y|$ for all $y\leq 0$. Therefore,
\begin{equs}
 \sum_{ j \in J^2_n } | e^{\lambda^n_j \kappa_n (t) } - (1+ h \lambda^n_j)^{\kappa_n (t) h^{-1}} |^ 2      
&=   \sum_{ j \in J^2_n } e^{2\lambda^n_j \kappa_n (t) } \Big| 1 - e^{\kappa_n (t) h^{-1}\big( \ln (1+ h \lambda^n_j) - \lambda^n_j h  \big) } \Big|^ 2
\\
&\leq   \sum_{ j \in J^2_n }  e^{-16j^2 t }|  \kappa_n (t) h^{-1} ( { h \lambda^n_j- \ln (1+ h \lambda^n_j) } ) |^ 2
\\
&\leq    \sum_{ j \in J^2_n }  e^{-16j^2t} |\kappa_n (t)  h^{-1}|^2 |h \lambda^n_j|^4  
\\
&\lesssim   n^{-4 } |t|^2 \sum_{ j \in\Z}    e^{-16j^2t }  j ^8.
\end{equs}
As before, this gives the desired bound.
We conclude that 
 \begin{equs}      
\sum_{-n \leq j \leq n-1} | e^{\lambda^n_j \kappa_n (t) } - (1+ h \lambda^n_j)^{ \kappa_n(t) h^{-1}} |^ 2 \lesssim    n^{-2 } t ^{-3 /2}.\label{eq:last-bit}
\end{equs}
Combining \eqref{eq:Pn-P-triangle}, \eqref{eq:estimate-P-Rn}, \eqref{eq:another-triangle}, \eqref{eq:kernel-time-dis-1}, and \eqref{eq:last-bit} brings the proof to an end.
\end{proof}

For the proof of the Lemma \ref{lem:det-rate} below, let us recall the following result, a nice consequence of Stein's method of normal approximation (see, for example, \cite[Corollary 4.2, p. 68]{Stein}).
\begin{lemma}        \label{lem:stein}
Let $\{Y_i\}_{i=1}^\infty$ be i.i.d. random variables with $\E Y_1=0$ and $\E Y_1^2=1$ and let $W\sim \mathcal{N}(0,1)$. Then, for all $f\in\cC^1(\R)$ and $k \in \mathbb{N}$  we have 
\begin{equs}
| \E f(W_k) - \E f(W) | \leq k^{-1/2}  \| f \|_{\C^1} \E |Y_1|^3 ,
\end{equs}
where $W_k = k^{-1/2}\sum_{i=1}^k Y_i$.

\end{lemma}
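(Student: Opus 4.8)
The plan is to prove this by Stein's method of normal approximation, exactly the route taken in \cite{Stein}. First I would introduce the Stein equation associated with the test function $f$: the bounded solution $g$ of
\begin{equ}
g'(w)-wg(w)=f(w)-\E f(W)
\end{equ}
is $g(w)=e^{w^2/2}\int_{-\infty}^w\big(f(x)-\E f(W)\big)e^{-x^2/2}\,dx$. The analytic input I would record at the outset is the standard family of Stein bounds: for Lipschitz $f$ one has $\|g\|_{\bB(\R)}+\|g'\|_{\bB(\R)}+\|g''\|_{\bB(\R)}\lesssim [f]_{\C^1(\R)}\leq\|f\|_{\C^1(\R)}$, obtained by differentiating the defining relation and estimating the resulting Gaussian integrals. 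Evaluating at $w=W_k$ and taking expectations then reduces the claim to controlling $\E[g'(W_k)-W_k g(W_k)]$.

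Next I would exploit the independence of the $Y_i$ through a leave-one-out decomposition. Writing $W_k^{(i)}=W_k-k^{-1/2}Y_i$ and using $\E Y_i=0$ together with the independence of $Y_i$ and $W_k^{(i)}$, one gets
\begin{equ}
\E[W_k g(W_k)]=k^{-1/2}\sum_{i=1}^k\E\big[Y_i\big(g(W_k)-g(W_k^{(i)})\big)\big].
\end{equ}
A first-order Taylor expansion of $g$ about $W_k^{(i)}$, with remainder controlled by $\|g''\|_{\bB(\R)}$, rewrites this as $\sum_i k^{-1}\E[Y_i^2]\,\E[g'(W_k^{(i)})]$ up to an error of order $\|g''\|_{\bB(\R)}\,k^{-1/2}\E|Y_1|^3$ (here $k$ summands, each of size $k^{-3/2}\E|Y_i|^3$, add up to the stated $k^{-1/2}$ scaling).

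It then remains to compare $\E[g'(W_k)]$ with $\sum_i k^{-1}\E[Y_i^2]\,\E[g'(W_k^{(i)})]$. Since $\sum_{i=1}^k k^{-1}\E Y_i^2=1$, the first quantity equals $\sum_i k^{-1}\E[Y_i^2]\,\E[g'(W_k)]$, so the difference is $\sum_i k^{-1}\E[Y_i^2]\big(\E[g'(W_k)]-\E[g'(W_k^{(i)})]\big)$; expanding $g'(W_k)=g'(W_k^{(i)}+k^{-1/2}Y_i)$ and bounding the increment by $\|g''\|_{\bB(\R)}$ shows this is at most $\|g''\|_{\bB(\R)}\,k^{-1/2}\E|Y_1|$. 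Combining the two error terms and using $\E|Y_1|\leq(\E Y_1^2)^{1/2}=1\leq\E|Y_1|^3$ (the outer inequalities by Jensen), both are dominated by $k^{-1/2}\|f\|_{\C^1(\R)}\E|Y_1|^3$, which is the asserted bound.

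The hard part will be the analytic control of the Stein solution, namely $\|g''\|_{\bB(\R)}\lesssim[f]_{\C^1(\R)}$ for merely Lipschitz (not classically differentiable) $f$; this is the delicate Gaussian estimate packaged in \cite[Corollary~4.2]{Stein}, so in practice I would quote it rather than reprove it, after which the probabilistic bookkeeping above is routine.
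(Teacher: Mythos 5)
The paper gives no proof of this lemma at all: it is quoted verbatim from \cite[Corollary 4.2, p. 68]{Stein}, so your proposal is by construction a different route --- you actually reconstruct the Stein's-method argument rather than cite it. The reconstruction is sound: the Stein equation $g'(w)-wg(w)=f(w)-\E f(W)$, the leave-one-out identity $\E[W_k g(W_k)]=k^{-1/2}\sum_{i}\E\big[Y_i\big(g(W_k)-g(W_k^{(i)})\big)\big]$ (valid because $Y_i$ is centered and independent of $W_k^{(i)}$), the Taylor step producing an error of order $k^{-1/2}\|g''\|_{\bB(\R)}\E|Y_1|^3$, and the comparison step producing an error of order $k^{-1/2}\|g''\|_{\bB(\R)}\E|Y_1|$ are all correct, and you rightly note that in this paper $\cC^1(\R)$ means bounded Lipschitz, so the delicate point is that the Stein solution has a bounded (a.e.) second derivative for merely Lipschitz $f$. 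Two caveats. First, as written --- with $\lesssim$ in the solution estimates and two error terms added at the end --- your argument yields $|\E f(W_k)-\E f(W)|\leq N k^{-1/2}\|f\|_{\C^1(\R)}\E|Y_1|^3$ for an absolute constant $N$ (the classical bound $\|g''\|_{\bB(\R)}\leq 2[f]_{\C^1(\R)}$ gives roughly $N=3$), not the constant $1$ asserted in the statement; this is immaterial for the paper, since the lemma is invoked only once, inside a $\lesssim$ chain in the proof of Lemma \ref{lem:det-rate}, but strictly speaking you prove a slightly weaker inequality than the one displayed. Second, a small misattribution: \cite[Corollary 4.2]{Stein} is the paper's reference for the \emph{entire} lemma, not for the bound $\|g''\|_{\bB(\R)}\lesssim[f]_{\C^1(\R)}$, which is a separate standard estimate on the Stein solution. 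As for what each approach buys: the paper's citation is shortest and carries the stated constant; your route is essentially self-contained and makes transparent where the third moment and the rate $k^{-1/2}$ come from.
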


\begin{lemma}     \label{lem:det-rate}
For any  $\alpha \in [0, 1]$ there exists a constant $N=N(\alpha,c)$ such that for all $\psi \in \cC^\alpha(\T)$, $t \in [0,1]$, and  $y \in \T$, we have 
\begin{equs}
| \cP^n_t \psi(y) - \cP_t \psi (y) | \leq N n^{-\alpha} \| \psi \|_{\C^\alpha(\T)}.    \label{eq:rate-deterministic}
\end{equs}
\end{lemma}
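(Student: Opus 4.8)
My plan is to prove the two endpoint cases $\alpha=0$ and $\alpha=1$ and to obtain the full range by interpolation. Fix $t\in[0,1]$ and $y\in\T$, and regard $T\psi:=\cP^n_t\psi(y)-\cP_t\psi(y)$ as a linear functional of $\psi$. For $\alpha=0$ the bound is immediate: $\cP_t$ and $\cP^n_t$ are averaging operators — the former against the probability density $p_t(y-\cdot)$, the latter against the law of $y+\widehat{S}^n_t$ from Remark~\ref{rem:random-walk} — hence contractions on $\bB(\T)$, so $|T\psi|\le 2\|\psi\|_{\bB(\T)}$. The core of the argument is the Lipschitz endpoint
\begin{equs}
|T\psi|\le Nn^{-1}\|\psi\|_{\C^1(\T)},\qquad \psi\in\C^1(\T).
\end{equs}
Granting this, the torus analogue of the interpolation statement recalled in Remark~\ref{rem:interpolation} (applied to the linear functional $T$, with $V=\R$ and $\Pi_n$ replaced by $\T$) gives $|T\psi|\lesssim 2^{1-\alpha}(n^{-1})^{\alpha}\|\psi\|_{\C^\alpha(\T)}\lesssim n^{-\alpha}\|\psi\|_{\C^\alpha(\T)}$, which is \eqref{eq:rate-deterministic}.

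To prove the Lipschitz endpoint I first reduce to $y\in\Pi_n$ and $t\in\Lambda_n$. By Lemma~\ref{lem:preservation-reg-T} with $\alpha=1$, both $\cP_t\psi$ and $\cP^n_t\psi$ lie in $\C^1(\T)$ with norm $\lesssim\|\psi\|_{\C^1(\T)}$. For $t\ge h$, Lemma~\ref{lem:terms-from-IC}(1) with $\alpha=\beta=1$ (so that both the logarithmic factor and the negative power of $r$ are absent) gives $|\cP^n_t\psi(y)-\cP^n_{\kappa_n(t)}\psi(\rho_n(y))|\lesssim n^{-1}\|\psi\|_{\C^1(\T)}$, while \eqref{eq:HK bound mixed} with $\alpha=\beta=1$ gives $|\cP_t\psi(y)-\cP_{\kappa_n(t)}\psi(\rho_n(y))|\lesssim\|\psi\|_{\C^1(\T)}\big(|y-\rho_n(y)|+|t-\kappa_n(t)|^{1/2}\big)\lesssim n^{-1}\|\psi\|_{\C^1(\T)}$. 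For the short times $t<h$ one bounds $T\psi$ directly: $|\cP_t\psi(y)-\psi(y)|\lesssim t^{1/2}\|\psi\|_{\C^1(\T)}\lesssim n^{-1}\|\psi\|_{\C^1(\T)}$ by \eqref{eq:HK bound mixed}, and $|\cP^n_t\psi(y)-\psi(y)|\lesssim n^{-1}\|\psi\|_{\C^1(\T)}$ by Lemma~\ref{lem:terms-from-IC}(2) together with the fact that $\cP^n_0$ acts as the identity on $\Pi_n$ (a consequence of \eqref{eq:orthogonal-n}). Hence it remains to bound $|T\psi|$ for $y\in\Pi_n$ and $t\in\Lambda_n\cap[h,1]$.

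On the grid I invoke the probabilistic representations. By \eqref{eq:disc-semigroup}, $\cP^n_t\psi(y)=\E\tilde\psi(y+\widehat{S}^n_t)$, where $\tilde\psi$ is the $1$-periodic extension of $\psi$; and since $p_t$ is the periodisation of the centred Gaussian density of variance $2t$, one has $\cP_t\psi(y)=\E\tilde\psi(y+G_t)$ with $G_t\sim\mathcal{N}(0,2t)$. Setting $m=th^{-1}\in\N$ (with $m\ge1$ since $t\ge h$) and $Y_i=(2c)^{-1/2}X_i$ (so that $\E Y_1=0$, $\E Y_1^2=1$, $\E|Y_1|^3=(2c)^{-1/2}$), the relation $h=c(2n)^{-2}$ gives $\widehat{S}^n_t=\sqrt{2t}\,W_m$ with $W_m=m^{-1/2}\sum_{i=1}^m Y_i$, matching the scaling $G_t=\sqrt{2t}\,W$, $W\sim\mathcal{N}(0,1)$. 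Applying Stein's estimate (Lemma~\ref{lem:stein}) to $f(w)=\tilde\psi(y+\sqrt{2t}\,w)$, used in the sharp form in which the error is controlled by the Lipschitz constant $\|f'\|_{\infty}=\sqrt{2t}\,[\psi]_{\C^1(\T)}$ rather than the full $\C^1$-norm, and recalling $m^{-1/2}=\sqrt{c}\,(2n)^{-1}t^{-1/2}$, we obtain
\begin{equs}
|T\psi|=|\E f(W_m)-\E f(W)|\lesssim m^{-1/2}\sqrt{2t}\,[\psi]_{\C^1(\T)}\lesssim n^{-1}\|\psi\|_{\C^1(\T)},
\end{equs}
which completes the Lipschitz endpoint.

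The main point to get right is the uniformity in $t$: the factor $\sqrt{2t}$ coming from the Lipschitz constant of $f$ cancels exactly the factor $t^{-1/2}$ inside $m^{-1/2}$. This is precisely why the Stein estimate must be run through the derivative seminorm of the test function; the sup-norm contribution, of size $m^{-1/2}\|\psi\|_{\bB(\T)}\sim (n\sqrt t)^{-1}\|\psi\|_{\bB(\T)}$, blows up as $t\to0$ and cannot be salvaged by splitting the range of $t$ (the small-time bound $t^{1/2}\|\psi\|_{\C^1}$ and the Stein bound $(n\sqrt t)^{-1}\|\psi\|_{\bB}$ leave an uncovered intermediate window). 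The only other delicate step is the passage from arbitrary $(t,y)$ to grid values, which is exactly where the discrete heat-kernel regularity of Lemmas~\ref{lem:preservation-reg-T} and \ref{lem:terms-from-IC} enters; everything else is the bookkeeping above.
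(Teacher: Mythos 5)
Your proof is correct and follows essentially the same route as the paper's: reduction to grid points $(t,y)\in\Lambda_n\times\Pi_n$ via Lemma~\ref{lem:terms-from-IC} and the classical bound \eqref{eq:HK bound mixed}, the random-walk representation \eqref{eq:disc-semigroup} combined with Stein's estimate (Lemma~\ref{lem:stein}) on the grid, and interpolation (Remark~\ref{rem:interpolation}) for $\alpha\in(0,1)$. Your remark that Stein's bound must be run through the Lipschitz seminorm $\|f'\|_{\infty}=\sqrt{2t}\,[\psi]_{\C^1(\T)}$, so that the factor $\sqrt{2t}$ cancels $m^{-1/2}\sim n^{-1}t^{-1/2}$, is precisely the cancellation the paper's computation uses implicitly (its displayed bound $(h^{-1}t)^{-1/2}t^{1/2}\|\psi\|_{\C^1(\T)}$ would not follow from the lemma read literally with the full $\C^1$-norm), so this is a point of added precision rather than a difference in method.
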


\begin{proof}
We first show the desired inequality for $\alpha=1$. We use the reformulation of the discrete heat kernel as the transition kernel of a discrete time random walk, see Remark \ref{rem:random-walk}.

Further, we assume for now that $t \in \Lambda_n$, $y\in \Pi_n$. Recall that in this case, by $\eqref{eq:disc-semigroup}$ we have that
\begin{equs}
| \cP^n_t \psi(y) - \cP_t \psi (y) | &= |  \E \tilde \psi (y+\widehat{S}^n_ { t})- \E \tilde \psi (y+\sqrt{2} W_ { t})|
\\
&= \Big|  \E \tilde \psi \Big(y+\sqrt{2t} \sum_{i-1}^{h^{-1}t} \frac{X_i}{\sqrt{2c} \sqrt{h^{-1} t}} \Big)- \E \tilde \psi (y+\sqrt{2  t } W_ { 1}) \Big|
\end{equs}
where $W_t$ is a Brownian motion. By applying Lemma \ref{lem:stein} with $Y_i= X_i/ \sqrt{2c}$ and $f(\cdot)= \tilde{\psi} (y+\sqrt{2t} \ \cdot)$, we get 
\begin{equs}
| \cP^n_t \psi(y) - \cP_t \psi (y) | \lesssim (h^{-1}t)^{-1/2} t^{1/2} \| \psi \|_{\C^1(\T)} \leq N n^{-1} \| \psi \|_{\C^1(\T)}.
\end{equs}
For $t \in [0,1]$, $y \in \T$, we have 
\begin{equs}
| \cP^n_t \psi(y) - \cP_t \psi (y) | &  \leq  | \cP^n_t \psi(y) - \cP^n_{\kappa_n(t)} \psi (\rho_n(y)) |
+| \cP^n_{\kappa_n(t)} \psi (\rho_n(y)) - \cP_{\kappa_n(t)} \psi (\rho_n(y)) |
\\
&\qquad+| \cP_{\kappa_n(t)} \psi (\rho_n(y)) - \cP_t \psi (y) |
\\
& \lesssim n^{-1} \| \psi \|_{\C^1(\T)},
\end{equs}
where for the first term we have used Lemma \ref{lem:terms-from-IC} ( \eqref{eq:disc-HK-applied} for $t\geq h$ and \eqref{eq:disc-HK-near-zero} for $t \in [0,h]$) and for the last term we have used classical heat kernel estimates (for example \eqref{eq:HK bound mixed} with  $\alpha= \beta =1$).  This proves the claim for $\alpha=1$. Also, for $\alpha=0$ the claim trivially holds. Finally, the claim for $\alpha \in (0,1)$ follows by interpolation, see Remark \ref{rem:interpolation}.
\end{proof}

\subsection{Discrete and continuous Ornstein-Uhlenbeck processes}
The last integral in \eqref{eq:main in mild form} is also called the infinite dimensional Orstein-Uhlenbeck process. In the trivial case $\psi=0$, $b=0$, that is simply the solution process, and it also plays an important role in the general case.
Let us therefore introduce a separate notation for it:
\begin{equ}
O_t(x)=\int_0^t\int_{\T} p_{t-s}(x-y)\,\xi (dy, ds) , \qquad (t,x) \in [0,1] \times \T. 
\end{equ}
Let $(s,t)\in[0,1]_<$.Thanks to the semigroup property of $\cP$ the Ornstein-Uhlenbeck process satisfies
\begin{equs}
O_t(x) & =\int_0^s\int_\T p_{t-r}(x-y)\,\xi (dy, dr) 
+\int_s^t\int_\T p_{t-r}(x-y)\,\xi (dy, dr) 
\\
&=\int_0^s\int_\T\int_\T p_{t-s}(x-z)p_{s-r}(z-y)\,dz\,\xi (dy, dr) 
+\int_s^t\int_\T p_{t-r}(x-y)\,\xi (dy, dr) 
\\
&=\big(\cP_{t-s} O_s\big)(x)+\int_s^t\int_\T p_{t-r}(x-y)\,\xi (dy, dr) .\label{eq:OU transition}
\end{equs}
The second term on the right-hand side of is a Gaussian random variable that is independent of $\cF_s$. Its variance is given by
\begin{equs}\label{eq:Q-def}
\int_s^t\int_\T|p_{t-r}(x-y)|^2\,dy\,dr=\int_0^{t-s}\int_\T |p_{r}(y)|^2\,dy\,dr=:Q(t-s).
\end{equs}
Therefore, for any  bounded measurable $g: \R \to \R$ and $\cF_s$-measurable random variable $Y$ one has the almost sure equality
\begin{equ}\label{eq:OU-transition-main}
\E^sg\big(O_t(x)+Y\big)=\cP_{Q(t-s)}^\R g\big(\big(\cP_{t-s} O_s\big)(x)+Y\big).
\end{equ}
Moreover, 
from \eqref{eq:HK-L2} one gets with an absolute constant $N>0$ for all $r\in(0,1]$ the bound
\begin{equ}\label{eq:Q-easy-bound-first}
N^{-1} \sqrt{r}\leq  Q(r)\leq N\sqrt{r}.
\end{equ}
The following is well-known.
\begin{proposition}\label{prop:OU Holder}
For any $p>0$, and $\theta \in(0,1/2)$ one has $\|O\|_{L_p(\Omega;\C([0,1];\C^{1/2-\theta}(\T))}<\infty$.
\end{proposition}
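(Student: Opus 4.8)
The plan is to combine the Gaussianity of $O$ with Kolmogorov's continuity theorem applied in the parabolic metric $d\big((t,x),(s,y)\big)=|t-s|^{1/2}+|x-y|$ on $[0,1]\times\T$. Fix $\theta'\in(0,\theta)$. Since for each $(t,x)$ the random variable $O_t(x)$ is a Wiener integral of a deterministic kernel, every increment $O_t(x)-O_s(y)$ is centered Gaussian, and hence $\|O_t(x)-O_s(y)\|_{L_p(\Omega)}\lesssim_p\|O_t(x)-O_s(y)\|_{L_2(\Omega)}$ for every $p$. The whole problem therefore reduces to the single $L_2$ estimate
\[
\E|O_t(x)-O_s(y)|^2\lesssim \big(|x-y|+|t-s|^{1/2}\big)^{1-2\theta'},\qquad (t,x),(s,y)\in[0,1]\times\T.
\]

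First I would bound the spatial and temporal increments separately and combine them through $\E|O_t(x)-O_s(y)|^2\lesssim \E|O_t(x)-O_t(y)|^2+\E|O_t(y)-O_s(y)|^2$. For the spatial increment, the Wiener isometry and the Fourier representation $p_r=\sum_k e^{-4\pi^2k^2 r}e_k$ give $\E|O_t(x)-O_t(y)|^2=\int_0^t\sum_{k\in\Z}e^{-8\pi^2k^2 r}|e_k(x)-e_k(y)|^2\,dr$. Using the elementary inequality $\min(1,u^2)\le u^{\sigma}$ (valid for $u\ge0$, $\sigma\in[0,2]$) with $\sigma=1-2\theta'$, one has $|e_k(x)-e_k(y)|^2\le 4\min(1,\pi^2k^2|x-y|^2)\le 4(\pi|k||x-y|)^{1-2\theta'}$, and Proposition \ref{prop:summation} (with $\gamma=1-2\theta'$) yields $\sum_k e^{-8\pi^2k^2r}|e_k(x)-e_k(y)|^2\lesssim |x-y|^{1-2\theta'}r^{-(1-\theta')}$; this is integrable in $r$ near $0$ precisely because $\theta'>0$, so $\E|O_t(x)-O_t(y)|^2\lesssim|x-y|^{1-2\theta'}$. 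For the temporal increment with $s<t$, the orthogonality of the integrals over $[0,s]$ and $[s,t]$ splits the variance into a ``new-noise'' term $\int_s^t\|p_{t-r}\|^2_{L_2(\T)}\,dr=Q(t-s)\lesssim|t-s|^{1/2}\le|t-s|^{1/2-\theta'}$ (by \eqref{eq:HK-L2}) and a ``past-noise'' term $\int_0^s\|p_{t-r}-p_{s-r}\|_{L_2(\T)}^2\,dr$. Writing $a_k=4\pi^2k^2$ and $e^{-a_k(t-r)}-e^{-a_k(s-r)}=e^{-a_k(s-r)}(e^{-a_k(t-s)}-1)$, the same inequality with $\sigma=1/2-\theta'$ and Proposition \ref{prop:summation} bound the past-noise term by $\lesssim|t-s|^{1/2-\theta'}\int_0^s(s-r)^{-(1-\theta')}\,dr\lesssim|t-s|^{1/2-\theta'}$.

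With the displayed $L_2$ bound in hand, Gaussianity upgrades it to $\E|O_t(x)-O_s(y)|^p\lesssim_p d\big((t,x),(s,y)\big)^{p(1/2-\theta')}$. The parabolic metric makes $[0,1]\times\T$ a metric space of covering dimension $3$ (a $d$-ball of radius $\eps$ has Lebesgue measure $\sim\eps^{3}$), so Kolmogorov's continuity theorem applies: choosing $p$ large enough that $p(1/2-\theta')>3$, one obtains a modification of $O$ that is Hölder continuous of some exponent $\gamma\in(1/2-\theta,\,1/2-\theta')$ with respect to $d$, whose $d$-Hölder seminorm has finite $p$-th moment. Reading off the two directions of this joint bound gives, uniformly in $t$, that $[O_t]_{\C^\gamma(\T)}$ is bounded and $\|O_t-O_s\|_{\bB(\T)}\lesssim|t-s|^{\gamma/2}$, with the relevant constants in $L_p(\Omega)$.

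Finally I would convert this into the stated norm. Since $\gamma>1/2-\theta$ and $\T$ has diameter at most $1$, $\C^\gamma(\T)\hookrightarrow\C^{1/2-\theta}(\T)$ with $\|f\|_{\C^{1/2-\theta}}\le\|f\|_{\C^\gamma}$, so each $O_t\in\C^{1/2-\theta}(\T)$ and $\sup_t\|O_t\|_{\C^{1/2-\theta}(\T)}$ is controlled by the $d$-Hölder seminorm plus $|O_0(0)|$, hence lies in $L_p(\Omega)$. Continuity of $t\mapsto O_t$ into $\C^{1/2-\theta}(\T)$ follows from the interpolation inequality $[g]_{\C^{1/2-\theta}}\lesssim\|g\|_{\bB}^{1-(1/2-\theta)/\gamma}[g]_{\C^\gamma}^{(1/2-\theta)/\gamma}$ applied to $g=O_t-O_s$, since $[O_t-O_s]_{\C^\gamma}$ stays bounded while $\|O_t-O_s\|_{\bB}\to0$ as $s\to t$. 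This establishes the claim for large $p$, and then for all $p>0$ by monotonicity of the $L_p(\Omega)$-norms. The main obstacle is entirely the two $L_2$ increment computations: ensuring that both increments carry the correct power of the parabolic metric and that the $r$-integrals converge at $0$ — which is exactly where the strict inequality $\theta'>0$ (i.e.\ $\theta'<\theta<1/2$) is used. Once these are in place, the anisotropic Kolmogorov theorem and the interpolation step are routine.
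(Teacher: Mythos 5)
Your proposal is correct, but there is nothing in the paper to compare it against line by line: the paper states Proposition \ref{prop:OU Holder} as ``well-known'' and gives no proof at all. What you have written is a complete, self-contained proof along the standard route, and it is in fact the same style of argument the paper \emph{does} spell out for the discrete analogue $O^n$ in Lemma \ref{lem:regularity-dis-OU}: It\^o isometry/Gaussian moment equivalence, the Fourier representation of the heat kernel, the summation bound of Proposition \ref{prop:summation}, and Kolmogorov's continuity criterion. Your computations check out: the spatial increment via $\min(1,u^2)\le u^{1-2\theta'}$ and $\gamma=1-2\theta'$ in Proposition \ref{prop:summation}, and the temporal increment split into the new-noise part (bounded by $Q(t-s)\lesssim|t-s|^{1/2}$, cf.\ \eqref{eq:HK-L2} and \eqref{eq:Q-easy-bound-first}) and the past-noise part, both integrable near the singularity precisely because $\theta'>0$. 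The two points where your argument goes beyond what Lemma \ref{lem:regularity-dis-OU} needs are (i) the temporal estimate, and (ii) the anisotropic (parabolic-metric) form of Kolmogorov's theorem with homogeneous dimension $3$, which is what upgrades the pointwise bounds to the full $\C([0,1];\C^{1/2-\theta}(\T))$ norm, i.e.\ joint continuity in time with values in the H\"older space rather than merely $\sup_{t}\|O_t\|_{L_p(\Omega;\C^{1/2-\theta}(\T))}<\infty$; the closing interpolation step $[g]_{\C^{1/2-\theta}}\lesssim\|g\|_{\bB}^{1-(1/2-\theta)/\gamma}[g]_{\C^\gamma}^{(1/2-\theta)/\gamma}$ is the right way to convert the parabolic H\"older bound into continuity of $t\mapsto O_t$ in $\C^{1/2-\theta}(\T)$. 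So your proof buys a proof of the exact statement as written (including the time-continuity baked into the norm), at the cost of invoking the metric/anisotropic Kolmogorov theorem, which is standard but should be cited (e.g.\ a Garsia--Rodemich--Rumsey-type formulation) if this were to be included in the paper.
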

We similarly define the discrete Ornstein-Uhlenbeck process by setting
\begin{equ}
O_t^n(x)=\int_0^t\int_\T p_{\kappa_n(t-r)}^{n}(x,y)\,\xi (dy, dr), \qquad (t,x)\in [0,1] \times \T.  
\end{equ}
We wish to obtain a discrete analogue of \eqref{eq:OU-transition-main}.
For $s\in[0,t]$ we write
\begin{equs}
O^n_t(x)&=\int_0^s\int_\T p_{\kappa_n(t-r)}^{n}(x,y)\,\xi (dy, dr) +\int_s^t\int_\T p_{\kappa_n(t-r)}^{n}(x,y)\,\xi (dy, dr)
\\
&=:\widehat O^n_{s,t}(x)+\int_s^t\int_\T p_{\kappa_n(t-r)}^{n}(x,y)\,\xi (dy, dr).
\end{equs}
Clearly, $\widehat O^n_{s,t}(x)$ is $\cF_s$-measurable, while the second term in the right-hand side is a Gaussian random variable that is independent of $\cF_s$.
Its variance is given by
\begin{equs}
Q^n(t-s)  : =\int_0^{t-s}\int_\T |p^{n}_{\kappa_n(r)}(x,y)|^2\,dy\,dr = \int_0^{t-s} \sum_{j=-n}^{n-1} |1+h \lambda^n_j|^{  2 \kappa_n(r) h^{-1} } \, dr.
\end{equs}
Therefore, for any continuous function $g$ and $\cF_s$-measurable random variable $Y$ one has the almost sure equality
\begin{equ}\label{eq:OUn-transition-main}
\E^sg\big(O_t^n(x)+Y\big)=\cP_{Q^n(t-s)}^\R g\big(\widehat O^n_{s,t}(x)+Y\big).
\end{equ}
In the next two statements we compare the expressions in \eqref{eq:OUn-transition-main} to the corresponding ones in \eqref{eq:OU-transition-main}.
\begin{corollary}
Let $\beta\in[0,2]$ and $p>0$. Then there exists a constant $N(p, \beta, c)$ such that for all $t, s \in [0,1] $ with $t-s \geq h$, $x\in\T$ one has
\begin{equs}                            \label{eq:Estimate-BDG}
\|\cP_{t-s}O_s(x)-\widehat{O}^n_{s,t}(x)\|_{L_p(\Omega)}
\leq N n^{-\beta/2 } |t-s|^{ (1-\beta)/4}. 
\end{equs}
Moreover, for $\beta \in [0,1)$, $p>0$, there exists a constant $N(p,\beta,c)$ such that for all $t\in[0,1]$, $x\in\T$, one has
\begin{equ}  \label{eq:Estimate-BDG2}
\|O_t(x)-O^n_t(x)\|_{L_p(\Omega)}\leq Nn^{-\beta/2} t^{(1-\beta)/4}.
\end{equ}
\end{corollary}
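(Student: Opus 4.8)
The plan is to use that both differences are centered Gaussian random variables, reduce the $L_p(\Omega)$ norm to the $L_2(\Omega)$ norm, and then apply the It\^o isometry together with Lemma~\ref{lem:Pn-P}. Concretely, by \eqref{eq:OU transition} and the definition of $\widehat{O}^n_{s,t}$,
\begin{equ}
\cP_{t-s}O_s(x)-\widehat{O}^n_{s,t}(x)=\int_0^s\int_\T\big(p_{t-r}(x-y)-p^{n}_{\kappa_n(t-r)}(x,y)\big)\,\xi(dy,dr),
\end{equ}
which, being a stochastic integral of a deterministic integrand, is a centered Gaussian. By the equivalence of all moments of a centered Gaussian, $\|\cdot\|_{L_p(\Omega)}\lesssim\|\cdot\|_{L_2(\Omega)}$ (with constant depending only on $p$), and the It\^o isometry followed by the substitution $\tau=t-r$ gives
\begin{equ}
\big\|\cP_{t-s}O_s(x)-\widehat{O}^n_{s,t}(x)\big\|_{L_2(\Omega)}^2=\int_{t-s}^t\big\|p_\tau(x-\cdot)-p^{n}_{\kappa_n(\tau)}(x,\cdot)\big\|_{L_2(\T)}^2\,d\tau.
\end{equ}
The same computation applies to $O_t(x)-O^n_t(x)$, the only difference being that the time integral now runs over $\tau\in[0,t]$. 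Hence both claims reduce to integrating the squared $L_2(\T)$-distance of the kernels in time.

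For \eqref{eq:Estimate-BDG}, the condition $t-s\geq h$ ensures $\tau\geq h$ throughout, so Lemma~\ref{lem:Pn-P} is available. I would apply it \emph{not} with the exponent $\beta$ itself but with the fixed value $2$, obtaining
\begin{equ}
\int_{t-s}^t\big\|p_\tau(x-\cdot)-p^{n}_{\kappa_n(\tau)}(x,\cdot)\big\|_{L_2(\T)}^2\,d\tau\lesssim n^{-2}\int_{t-s}^t\tau^{-3/2}\,d\tau\lesssim n^{-2}|t-s|^{-1/2}.
\end{equ}
This is then downgraded to the desired exponent by trading powers of $n$ for powers of $|t-s|$: since $(\beta-2)/2\leq 0$ and $n^2|t-s|\geq n^2h=c/4$, the identity
\begin{equ}
n^{-2}|t-s|^{-1/2}=n^{-\beta}|t-s|^{(1-\beta)/2}\big(n^2|t-s|\big)^{(\beta-2)/2}
\end{equ}
bounds the right-hand side by $N(c,\beta)\,n^{-\beta}|t-s|^{(1-\beta)/2}$, and taking square roots yields \eqref{eq:Estimate-BDG}.

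For \eqref{eq:Estimate-BDG2} I would split the time integral at $h$. On $[h,t]$ (present only when $t>h$) Lemma~\ref{lem:Pn-P} applies with the given $\beta\in[0,1)$, and because $(\beta+1)/2<1$ the bound $\int_0^t\tau^{-(\beta+1)/2}\,d\tau\lesssim t^{(1-\beta)/2}$ is finite, contributing $n^{-\beta}t^{(1-\beta)/2}$. On $[0,h]$ the lemma is unavailable, but there $\kappa_n(\tau)=0$, so I use the crude sizes $\|p_\tau(x-\cdot)\|_{L_2(\T)}^2\lesssim\tau^{-1/2}$ from \eqref{eq:HK-L2} and $\|p^{n}_0(x,\cdot)\|_{L_2(\T)}^2=2n$ from \eqref{eq:Pn-initial-L2}, so that $\int_0^h(\tau^{-1/2}+2n)\,d\tau\lesssim\sqrt{h}+nh\lesssim n^{-1}$. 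This leftover $n^{-1}$ is absorbed into the target: for $t\geq h$ one has $t^{(1-\beta)/2}\gtrsim h^{(1-\beta)/2}\gtrsim n^{-(1-\beta)}$, hence $n^{-1}\lesssim n^{-\beta}t^{(1-\beta)/2}$. When $t<h$ only the window $[0,t]$ contributes and the same elementary bound gives $\sqrt{t}+nt\lesssim n^{-\beta}t^{(1-\beta)/2}$, using $t<h\lesssim n^{-2}$.

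The routine ingredients are the Gaussian moment comparison and the elementary time integrals. The genuinely delicate point is the factor $|t-s|^{(1-\beta)/4}$ in the first estimate: applying Lemma~\ref{lem:Pn-P} with the matching exponent $\beta$ yields only $n^{-\beta}$ with the wrong (or no) power of $|t-s|$ when $\beta<1$, and the correct power is recovered only by over-optimising the exponent in the kernel bound and then invoking the CFL-type inequality $n^2|t-s|\gtrsim1$. The analogous subtlety in the second estimate is the short-time window $\tau<h$, where Lemma~\ref{lem:Pn-P} fails and one must instead control the continuous and discrete kernels separately and check that the resulting $n^{-1}$ is dominated by the main term.
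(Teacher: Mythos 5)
Your proposal is correct and follows essentially the same route as the paper: reduce to $p=2$ by Gaussian moment equivalence, prove \eqref{eq:Estimate-BDG} via It\^o's isometry and Lemma~\ref{lem:Pn-P} applied with exponent $2$, then trade the surplus power of $n^{-1}$ for powers of $|t-s|$ using $n^{-1}\lesssim|t-s|^{1/2}$ (valid since $t-s\geq h$), exactly as in the paper. For \eqref{eq:Estimate-BDG2} your split of the time integral at $\tau=h$ with the crude bounds $\|p_\tau\|_{L_2(\T)}^2\lesssim\tau^{-1/2}$ and $\|p^n_0(x,\cdot)\|_{L_2(\T)}^2=2n$ on the short window is the same computation the paper phrases through $Q(h)$ and $Q^n(h)$, including the final absorption of the leftover $n^{-1}$ into the main term.
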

\begin{proof}
We will assume that $p=2$, since the general case follows by the equivalence of Gaussian moments. 
Notice that for $t-s \geq h$, the estimate \eqref{eq:Estimate-BDG} with $\beta=2$   follows directly by It\^o's isometry and Lemma \ref{lem:Pn-P}. Since it is true for $\beta =2$ is is also true for any $\beta \in [0,2]$ since $1/n \le 2c^{-1/2}  |t-s|^{1/2}$. 

As for \eqref{eq:Estimate-BDG2}, there are two cases.
First, assume that $t \in [0,h]$. By It\^o's isometry and \eqref{eq:Q-easy-bound-first}, we have 
\begin{equs}
\|O_t(x)\|_{L_2(\Omega)} = |Q(t)|^{1/2} \lesssim  t^{1/4} \lesssim n^{-\beta/2} t^{(1-\beta)/4}.
\end{equs}
By \eqref{eq:Qn-small-times} (below), we have 
\begin{equs}
\|O^n_t(x)\|_{L_2(\Omega)} = |Q^n(t)|^{1/2}=  (2 n t)^{1/2} \lesssim n^{-\beta/2} t^{(1-\beta)/4}.
\end{equs}
Combining the two estimate above gives \eqref{eq:Estimate-BDG2}. 
In the case $t \in (h,1]$ we use Lemma \ref{lem:Pn-P} and the above estimates for $Q$ and $Q^n$ to write
\begin{equs}
\|O_t(x) -O^n_t(x)\|_{L_2(\Omega)} &  \leq  \Big(  \int_0^{t-h} \| p_{t-r}(x- \cdot)-p^{n}_{\kappa_n(t-r)}(x,\cdot) \|^2_{L_2(\T)} \, dr \Big) ^{1/2}
\\
&  \qquad +  \Big(  \int_{t-h}^t \| p_{t-r}(x - \cdot)-p^{n}_{\kappa_n(t-r)}(x,\cdot)  \|^2_{L_2(\T)} \, dr \Big) ^{1/2}
\\
& \lesssim  n^{-\beta/2} t^{(1-\beta)/4} + |Q(h)|^{1/2}+ |Q^n(h)|^{1/2}
\\
& \lesssim  n^{-\beta/2} t^{(1-\beta)/4}. 
\end{equs}
This finishes the proof.
\end{proof}
\begin{lemma}\label{lem:Q-difference}
For any $\beta\in[0,2)$ there exists a constant $N=N(c,\beta)$ such that for all $r\in[0,2]$ one has the bound
\begin{equ}\label{eq:Q-difference}
|Q^n(r)-Q(r)|\leq N n^{-\beta/2}r^{1/2-\beta/4}.
\end{equ}
\end{lemma}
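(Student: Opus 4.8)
The plan is to express the difference as a time integral and reduce the estimate to Lemma \ref{lem:Pn-P}. Fixing a gridpoint $x\in\Pi_n$ (so that $|e^n_j(x)|=1$ and the formula for $Q^n$ as a sum over $j$ coincides with an $L_2(\T)$-norm) and using translation invariance for $Q$, I would write
\[
Q^n(r)-Q(r)=\int_0^r\Big(\|p^n_{\kappa_n(s)}(x,\cdot)\|^2_{L_2(\T)}-\|p_s(x-\cdot)\|^2_{L_2(\T)}\Big)\,ds .
\]
The algebraic engine is the elementary inequality $\big|\,\|a\|^2-\|b\|^2\,\big|\le(\|a\|+\|b\|)\,\|a-b\|$, applied to the integrand with $a=p^n_{\kappa_n(s)}(x,\cdot)$ and $b=p_s(x-\cdot)$.

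First I would dispose of the short-time regime $r\le h$. Here $\kappa_n(s)=0$ for $s\in[0,r)$, so by \eqref{eq:Pn-initial-L2} the $Q^n$-integrand is the constant $2n$, giving
\begin{equ}\label{eq:Qn-small-times}
Q^n(r)=2nr\qquad(r\in[0,h]),
\end{equ}
while $Q(r)\lesssim r^{1/2}$ by \eqref{eq:HK-L2}. Since $h\le n^{-2}$, a direct computation shows that both $2nr$ and $r^{1/2}$ are $\lesssim n^{-\beta/2}r^{1/2-\beta/4}$ on $[0,h]$ (for the first, $n^{1+\beta/2}r^{1/2+\beta/4}\le 1$; for the second, $r^{\beta/4}\le h^{\beta/4}\le n^{-\beta/2}$), so the claim follows by the triangle inequality.

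For $r>h$ I would split $\int_0^r=\int_0^h+\int_h^r$. The first part equals $Q^n(h)-Q(h)$, controlled by \eqref{eq:Qn-small-times} and the short-time bound at $r=h$; since $1/2-\beta/4\ge0$ we may absorb $h^{1/2-\beta/4}\le r^{1/2-\beta/4}$. For the second part I would bound the two factors separately: for $s\ge h$ both $\|p_s(x-\cdot)\|_{L_2(\T)}$ and $\|p^n_{\kappa_n(s)}(x,\cdot)\|_{L_2(\T)}$ are $\lesssim s^{-1/4}$, the former by \eqref{eq:HK-L2} and the latter by combining \eqref{eq:Pn-exponential}, Proposition \ref{prop:summation} and $\kappa_n(s)\ge s/2$; meanwhile Lemma \ref{lem:Pn-P} gives $\|p^n_{\kappa_n(s)}(x,\cdot)-p_s(x-\cdot)\|_{L_2(\T)}\lesssim n^{-\beta/2}s^{-(\beta+1)/4}$. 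Multiplying and integrating,
\[
\int_h^r\Big|\,\|p^n_{\kappa_n(s)}(x,\cdot)\|^2_{L_2(\T)}-\|p_s(x-\cdot)\|^2_{L_2(\T)}\,\Big|\,ds\lesssim n^{-\beta/2}\int_h^r s^{-(\beta+2)/4}\,ds\lesssim n^{-\beta/2}r^{(2-\beta)/4},
\]
and $(2-\beta)/4=1/2-\beta/4$ is exactly the target exponent.

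The main obstacle is the integrability of $s^{-(\beta+2)/4}$ near the origin: the exponent exceeds $-1$ precisely when $\beta<2$, which is why $\int_h^r s^{-(\beta+2)/4}\,ds\lesssim r^{(2-\beta)/4}$ uniformly in $h$, and this is exactly where the hypothesis $\beta\in[0,2)$ is used and cannot be pushed to $\beta=2$. A secondary, purely technical point is that the admissible range $r\in[0,2]$ forces an application of Lemma \ref{lem:Pn-P} and Proposition \ref{prop:summation} for times $s$ up to $2$ rather than $1$; both proofs are insensitive to the upper bound on the time variable and extend to $s\in(0,2]$ at the cost of a larger constant, which I would simply invoke.
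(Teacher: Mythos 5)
Your proof is correct and follows essentially the same route as the paper's: split at $r=h$, use \eqref{eq:Pn-initial-L2} and \eqref{eq:HK-L2} for the short-time regime, and on $[h,r]$ factor the integrand as (difference)$\times$(sum) of $L_2$-norms, bounding the difference by Lemma \ref{lem:Pn-P} and the sum by $s^{-1/4}$, so that $\beta<2$ is exactly what makes $\int_h^r s^{-(\beta+2)/4}\,ds$ converge to the right order. The only cosmetic differences are that the paper applies Cauchy--Schwarz to $\int_\T|a^2-b^2|\,dy$ rather than the norm inequality $|\,\|a\|^2-\|b\|^2|\le(\|a\|+\|b\|)\|a-b\|$, and it bounds the $[0,h]$ piece crudely by $n^{-1}$ (then uses $r\ge h$) instead of reusing the short-time estimate at $r=h$.
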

\begin{proof}
Let us first assume $r\leq h$. Then by \eqref{eq:HK-L2} and \eqref{eq:Pn-initial-L2}
\begin{equ}
|Q^n(r)-Q(r)|\leq \int_0^r\int_\T|p^n_{\kappa_n(s)}(x,y)|^2+|p_s(x-y)|^2\,dy\,ds\lesssim r n+r^{1/2}.
\end{equ}
Since $r\leq h$ implies $rn\lesssim r^{1/2}$, the second term dominates. One similarly gets $r^{1/2}\lesssim r^{1/2-\beta/4}n^{-\beta/2}$ for any $\beta\geq 0$, yielding \eqref{eq:Q-difference}.
Moving on to the $r\geq h$ case, one has
\begin{equ}
|Q^n(r)-Q(r)|\leq I_1+I_2,
\end{equ}
with
\begin{equs}
I_1&=\int_0^h\int_\T|p^n_{\kappa_n(s)}(x,y)|^2+|p_s(x-y)|^2\,dy\,ds\lesssim n^{-1},
\\
I_2&=\int_h^r\Big(\int_\T|p^n_{\kappa_n(s)}(x,y)-p_s(x-y)|^2\,dx\Big)^{1/2}
\Big(\int_\T|p^n_{\kappa_n(s)}(x,y)+p_s(x-y)|^2\,dx\Big)^{1/2}\,ds
\\
&\lesssim \int_0^r n^{-\beta/2}s^{-(\beta+1)/4}s^{-1/4}\,ds,
\end{equs}
using \eqref{eq:Pn-P} to get the last line. As long as $\beta<2$, the last integral is finite and is of the required order. Finally, the condition $r\geq h$ implies $n^{-1}\lesssim n^{-\beta/2}r^{1/2-\beta/4}$, finishing the proof.
\end{proof}

For very short times, that is, $r\in[0,h]$, one has from \eqref{eq:Pn-initial-L2}
\begin{equ}\label{eq:Qn-small-times}
Q^n(r)=2nr.
\end{equ}
Otherwise, we have the following control on $Q^n$.
\begin{lemma}    \label{lem:bounds-Q-n}
For any $\beta \in [0,1]$, there exist constants $N>0$ depending only on $\beta$ and $c$  such that 
for any  $r\in[h ,1]$, $r' \in [ r, 1]$, we have 
\begin{equs}
Q^n(r)&\geq N^{-1}\sqrt{r},\label{eq:Q-lower-bound}
\\
|Q^n(r)-Q^n(r')| & \leq  N|r-r'|^\beta  |r|^{-\beta /2}|r'|^{(1-\beta)/2}.\label{eq:Q-upper-bound}
\end{equs}
\end{lemma}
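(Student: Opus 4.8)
The plan is to work throughout with the representation $Q^n(r)=\int_0^r f^n(s)\,ds$, where $f^n(s):=\sum_{j=-n}^{n-1}|1+h\lambda^n_j|^{2\kappa_n(s)h^{-1}}\ge 0$; in particular $Q^n$ is nondecreasing. Two pointwise facts about $f^n$ will drive both estimates. First, for $s<h$ one has $\kappa_n(s)=0$, hence every summand equals $1$ and $f^n(s)=2n$ (this is also the content of \eqref{eq:Qn-small-times}). Second, for $s\ge h$, squaring \eqref{eq:Pn-exponential} gives $|1+h\lambda^n_j|^{2\kappa_n(s)h^{-1}}\le e^{-2\delta\kappa_n(s)j^2}$, so by Proposition~\ref{prop:summation} (with $\gamma=0$, $\lambda=2\delta$) and the elementary bound $\kappa_n(s)\ge s/2$ valid for $s\ge h$, I obtain the key upper bound $f^n(s)\lesssim\kappa_n(s)^{-1/2}\lesssim s^{-1/2}$.

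For the lower bound \eqref{eq:Q-lower-bound} I would argue by comparison with the continuous object $Q$, splitting at a large multiple $C_0 h$ of the mesh with $C_0=C_0(c)$ to be chosen. On the regime $r\ge C_0 h$ I write $Q^n(r)\ge Q(r)-|Q^n(r)-Q(r)|$, use the lower bound $Q(r)\ge N_0^{-1}\sqrt r$ from \eqref{eq:Q-easy-bound-first}, and invoke Lemma~\ref{lem:Q-difference} with $\beta=1$ to get $|Q^n(r)-Q(r)|\le N_1 n^{-1/2}r^{1/4}$. Since $r\ge C_0 h=C_0 c(2n)^{-2}$ yields $n^{-1/2}r^{1/4}\lesssim C_0^{-1/4}\sqrt r$, choosing $C_0$ large enough (depending only on $c$, $N_0$, $N_1$) absorbs the error into $\tfrac12 N_0^{-1}\sqrt r$ and leaves $Q^n(r)\ge\tfrac12 N_0^{-1}\sqrt r$. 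On the complementary regime $h\le r<C_0 h$ the comparison is useless, and here I instead use monotonicity together with the exact value $Q^n(h)=2nh$ from \eqref{eq:Qn-small-times}: then $Q^n(r)\ge Q^n(h)=2nh$ while $\sqrt r<\sqrt{C_0 h}$, so $Q^n(r)/\sqrt r\ge 2nh/\sqrt{C_0 h}=\sqrt{c/C_0}$, which is again a lower bound of the desired form. This boundary regime is the main obstacle: it is precisely where the natural comparison with $Q$ breaks down because the error in Lemma~\ref{lem:Q-difference} is comparable to $\sqrt r$, and one is forced to exploit the explicit small-time identity for $Q^n$.

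For the increment bound \eqref{eq:Q-upper-bound} I treat the endpoints $\beta\in\{0,1\}$ and interpolate. Since $Q^n(r')-Q^n(r)=\int_r^{r'}f^n(s)\,ds\ge0$, the pointwise bound $f^n(s)\lesssim s^{-1/2}$ (valid for $s\ge r\ge h$) gives, for $\beta=1$,
\[
|Q^n(r)-Q^n(r')|\lesssim\int_r^{r'}s^{-1/2}\,ds=2(\sqrt{r'}-\sqrt r)=\frac{2(r'-r)}{\sqrt{r'}+\sqrt r}\le |r-r'|\,r^{-1/2},
\]
matching the claim at $\beta=1$. For $\beta=0$ I bound $|Q^n(r)-Q^n(r')|\le Q^n(r')$ and show $Q^n(r')\lesssim\sqrt{r'}$ by splitting the integral at $h$: $\int_0^h f^n(s)\,ds=2nh=c(2n)^{-1}\lesssim\sqrt h\le\sqrt{r'}$ and $\int_h^{r'}s^{-1/2}\,ds\lesssim\sqrt{r'}$. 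Finally, the case $\beta\in(0,1)$ follows from the elementary interpolation used elsewhere in the paper (if $a\le b$ and $a\le c$ then $a\le b^\beta c^{1-\beta}$), applied with $a=|Q^n(r)-Q^n(r')|$, $b\simeq|r-r'|\,r^{-1/2}$, and $c\simeq (r')^{1/2}$, which reproduces exactly the factor $|r-r'|^\beta|r|^{-\beta/2}|r'|^{(1-\beta)/2}$. All constants depend only on $c$ (and on $\beta$ through the interpolation), as required.
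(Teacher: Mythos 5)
Your proof is correct, and while the overall skeleton matches the paper's (compare with the continuum quantity for $r$ above a constant multiple of $n^{-2}$; handle the boundary regime $r\in[h,\,\mathrm{const}\cdot n^{-2}]$ by monotonicity of $Q^n$ together with the exact value $Q^n(h)=2nh$ from \eqref{eq:Qn-small-times}; prove \eqref{eq:Q-upper-bound} at the endpoints $\beta\in\{0,1\}$ and interpolate), the implementation of the lower bound \eqref{eq:Q-lower-bound} is genuinely different. The paper does not invoke Lemma \ref{lem:Q-difference} there: it introduces the windowed quantities $\tilde Q^n(r)$ and $\tilde Q(r)$, integrating the kernels only over $[r/2,r]$ so that Lemma \ref{lem:Pn-P} is applicable on the whole window, compares the \emph{square roots} via the triangle inequality in $L_2$, and then converts back using the elementary inequality $a^2\geq 2b(a-b)+b^2$. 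You instead compare $Q^n(r)$ with $Q(r)$ directly, quoting Lemma \ref{lem:Q-difference} with $\beta=1$ and \eqref{eq:Q-easy-bound-first}, and absorb the error $N_1 n^{-1/2}r^{1/4}\leq N_1(4/(C_0c))^{1/4}\sqrt r$ by choosing the splitting threshold $C_0h$ large; this is legitimate (Lemma \ref{lem:Q-difference} precedes the statement, its proof does not use it, and the paper itself uses \eqref{eq:Q-difference} in the proof of \eqref{eq:Q-upper-bound}), and it is shorter because the awkward region $s<h$ — which forces the paper's windowing — has already been dealt with once and for all inside Lemma \ref{lem:Q-difference}. Conversely, for the upper bound your treatment of the $\beta=0$ endpoint is slightly more self-contained than the paper's: you get $Q^n(r')\lesssim\sqrt{r'}$ directly from the pointwise bound $f^n(s)\lesssim s^{-1/2}$ (via \eqref{eq:Pn-exponential}, Proposition \ref{prop:summation}, and $\kappa_n(s)\geq s/2$) plus the identity $\int_0^h f^n = 2nh$, whereas the paper routes this through Lemma \ref{lem:Q-difference} and \eqref{eq:Q-easy-bound-first}; the $\beta=1$ endpoint and the final interpolation coincide with \eqref{eq:sqrt-blow-up} and the paper's concluding step. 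In short: same architecture, but you trade the paper's windowed $L_2$-comparison for a reuse of Lemma \ref{lem:Q-difference}, gaining brevity at no loss of rigour.
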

\begin{proof}
First we show \eqref{eq:Q-lower-bound}.
Define
\begin{equ}
\tilde Q^n(r):=\int_{r/2}^{r}\int_\T |p^{n}_{\kappa_n (s)}(x,y)|^2\,dy\,ds,
\qquad
\tilde Q(r):=\int_{r/2}^{r}\int_\T |p_{s}(x-y)|^2\,dy\,ds.
\end{equ}
Since $\tilde Q^n\leq Q^n$, it suffices to bound $\tilde Q^n$.
From \eqref{eq:HK-L2} one gets for all $r\in(0,1]$
\begin{equ}\label{eq:Q-easy-bound}
N_1^{-1} \sqrt{r}\leq \tilde Q(r)\leq N_1\sqrt{r}
\end{equ}
with some absolute constant $N_1>0$.
By the triangle inequality one can write
\begin{equs}
|\big( & \tilde Q^n(r)\big)^{1/2}-\big(\tilde Q(r)\big)^{1/2}|
 \leq \Big(\int_{r/2}^r\int_\T|p_s(x-y)-p^{n}_{\kappa_n (s)}(x,y) |^2 \, dy\,ds\Big)^{1/2}\,.
\end{equs}
Note that if $N'$ is a sufficiently large constant, then $r\geq N'n^{-2}$ implies $r/2\geq h$.
Therefore the bound \eqref{eq:Pn-P} with the choice $\beta=1$ yields
\begin{equ}
\int_{r/2}^r\int_\T|p_s(x-y)-p^{n}_{\kappa_n (s)}(x,y) |^2 \, dy\,ds\leq N \int_{r/2}^r n^{-1} s^{-1}\,ds\leq N n^{-1}.
\end{equ}
Therefore, one has
\begin{equ}\label{eq:Qn-Q}
|\big(\tilde Q^n(r)\big)^{1/2}-\big(\tilde Q(r)\big)^{1/2}|\leq N n^{-1/2}.
\end{equ}
Combining \eqref{eq:Q-easy-bound} and \eqref{eq:Qn-Q} with the elementary equality $a^2\geq2b(a-b)+b^2$
\begin{equs}
\tilde Q^n(r)&
\geq 2\big(\tilde Q(r)\big)^{1/2}\Big(\big(\tilde Q^n(r)\big)^{1/2}-\big(\tilde Q(r)\big)^{1/2}\Big)+\tilde Q(r)
\\
&\geq -N\sqrt[4]{r}n^{-1/2}+N_1^{-1}\sqrt{r}.
\end{equs}
Choosing $N'$ sufficiently large, the bound \eqref{eq:Q-lower-bound} indeed follows for $r\geq N'n^{-2}$. If $r \in [h , N' n^{-2}$, then by the monotonicity of $Q^n$ and the fact that $Q^n(h)=2 n h $ (see \eqref{eq:Qn-small-times}),  we get 
\begin{equs}
Q^n(r) \geq Q^n(h) = 2n h = 2c n^{-1} \geq 2 c (\sqrt{N'})^{-1} \sqrt{r}, 
\end{equs}
which shows \eqref{eq:Q-lower-bound} also for $r \in [h , N' n^{-2}]$. 

We continue with \eqref{eq:Q-upper-bound}. By \eqref{eq:Pn-exponential} and Proposition \ref{prop:summation} we get
\begin{equs}
|Q^n(r)-Q^n(r')| &=  \int_{r}^{r'}  \sum_{j=-n}^{n-1} (1+h \lambda^n_j)^{  2 \kappa_n(t) h^{-1} }  \, dt \lesssim   \int_{r}^{r'}  \sum_{j\in\Z} e^{-2\delta j ^2 t}  \, dt
\\
&\lesssim \int_{r}^{r'}  t^{-1/2} \, dt 
\leq |r-r'| |r|^{-1/2}. \label{eq:sqrt-blow-up}
\end{equs}
On the other hand, we can estimate the difference $|Q^n(r)-Q^n(r')|$ term by term. Recalling \eqref{eq:Q-difference} and \eqref{eq:Q-easy-bound-first}, we get
$
|Q^n(r'')| \leq |Q^n(r'')-Q(r'')|+|Q(r'')|\lesssim (r'')^{1/2}
$
for both $r''=r,r'$.
It follows that
$|Q^n(r)-Q^n(r')| \lesssim (r')^{1/2}$, which combined with \eqref{eq:sqrt-blow-up} 
 finishes the proof.
\end{proof}

\begin{lemma}    \label{lem:regularity-dis-OU}
For any $p>0$, $\theta \in (0, 1/2)$, there exist a constant $N(c,p,\theta)$ such that
\begin{equs}
\sup_{n \in \mathbb{N}}\sup_{t  \leq 1} \| O^n_t \|_{L_p(\Omega; \cC^{1/2-\theta}(\T))} \leq N.
\end{equs}
\end{lemma}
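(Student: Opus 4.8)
The plan is to deduce the H\"older bound from a two--point estimate via the Kolmogorov continuity criterion, exploiting that for each fixed $t$ the field $x\mapsto O^n_t(x)$ is Gaussian (indeed a.s.\ piecewise linear in $x$, so no modification is needed). Since $O^n_t(x)-O^n_t(x')$ is centred Gaussian, all its $L_p(\Omega)$ norms are comparable, and it therefore suffices to establish, uniformly in $n\in\N$, $t\in[0,1]$ and $x,x'\in\T$, the two bounds
\begin{equ}
\|O^n_t(x)\|_{L_2(\Omega)}\lesssim 1,\qquad \|O^n_t(x)-O^n_t(x')\|_{L_2(\Omega)}^2\lesssim |x-x'|.
\end{equ}
Granting these, Gaussianity upgrades the increment bound to $\|O^n_t(x)-O^n_t(x')\|_{L_p(\Omega)}\lesssim|x-x'|^{1/2}$ for every $p$, and the Kolmogorov criterion then yields $\E\,[O^n_t]^p_{\C^\gamma(\T)}\lesssim 1$ for all $\gamma<1/2-1/p$; choosing $p>1/\theta$ gives the claim for that $p$, and the general case follows by Jensen's inequality ($\|\cdot\|_{L_{p'}}\leq\|\cdot\|_{L_p}$ for $p'\leq p$).

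The first bound is immediate from It\^o's isometry, which gives $\|O^n_t(x)\|_{L_2(\Omega)}^2=Q^n(t)$; by Lemma \ref{lem:Q-difference} and \eqref{eq:Q-easy-bound-first} one has $Q^n(t)\lesssim\sqrt t\le 1$ for $t\le 1$.

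The increment bound is the main step. By It\^o's isometry and the substitution $s=t-r$,
\begin{equ}
\|O^n_t(x)-O^n_t(x')\|_{L_2(\Omega)}^2=\int_0^t\|p^n_{\kappa_n(s)}(x,\cdot)-p^n_{\kappa_n(s)}(x',\cdot)\|^2_{L_2(\T)}\,ds.
\end{equ}
Inserting the spectral representation \eqref{eq:Pn-def} and using that $e^n_j=e_j$ on $\Pi_n$ together with the discrete orthogonality \eqref{eq:orthogonal-n}, the cross terms vanish and
\begin{equ}
\|p^n_{\kappa_n(s)}(x,\cdot)-p^n_{\kappa_n(s)}(x',\cdot)\|^2_{L_2(\T)}=\sum_{j=-n}^{n-1}\big|1+h\lambda^n_j\big|^{2\kappa_n(s)h^{-1}}\,|e^n_j(x)-e^n_j(x')|^2.
\end{equ}
Since $\|e^n_j\|_{\bB(\T)}\le 1$ and $[e^n_j]_{\C^1(\T)}\lesssim|j|$ (directly from \eqref{eq:linear-interpolation}), one has $|e^n_j(x)-e^n_j(x')|^2\lesssim\min(1,j^2|x-x'|^2)$, while \eqref{eq:Pn-exponential} gives $|1+h\lambda^n_j|^{2\kappa_n(s)h^{-1}}\le e^{-2\delta\kappa_n(s)j^2}$. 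I would then split the $s$--integral at $h$. On $[0,h)$ one has $\kappa_n(s)=0$, and using $\min(1,j^2a^2)\le|j|a$ with $a=|x-x'|$, together with $h\lesssim n^{-2}$ and $\sum_{|j|\le n}|j|\lesssim n^2$, the contribution is $\lesssim h\,|x-x'|\sum_{|j|\le n}|j|\lesssim|x-x'|$. On $[h,t]$ one has $\kappa_n(s)\ge s/2$, so the factor is $\le e^{-\delta sj^2}$; integrating in $s$ first yields $\lesssim\sum_{j\neq0}\tfrac1{j^2}\min(1,j^2|x-x'|^2)=\sum_{j\neq0}\min(j^{-2},|x-x'|^2)\lesssim|x-x'|$, the last bound being elementary (split at $|j|\le|x-x'|^{-1}$). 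Adding the two contributions gives the desired increment bound.

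The main obstacle is this increment estimate, and within it the behaviour near $s=0$: the initial discrete kernel is only bounded as $\|p^n_0(x,\cdot)\|^2_{L_2(\T)}=2n$ (cf.\ \eqref{eq:Pn-initial-L2}), which blows up in $n$, so one cannot simply bound the integrand uniformly in time. The point is that this singular layer has thickness $h\sim n^{-2}$, and the factor $\min(1,j^2|x-x'|^2)\le|j||x-x'|$ coming from the spatial increment supplies exactly the gain needed for $h\sum_{|j|\le n}|j|$ to be of order one, so that the short--time contribution is controlled by $|x-x'|$ uniformly in $n$.
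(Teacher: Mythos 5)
Your proposal is correct and follows essentially the same route as the paper's proof: the spectral representation of $p^n$ together with the discrete orthogonality \eqref{eq:orthogonal-n}, the CFL-based exponential bound \eqref{eq:Pn-exponential}, a separate treatment of the singular initial layer of width $h\sim n^{-2}$ (the paper splits the time integral at $t-2h$, you at $h$, which is the same thing after the substitution $s=t-r$), and Kolmogorov's continuity criterion to conclude. The only cosmetic differences are that you invoke equivalence of Gaussian moments where the paper uses Burkholder--Davis--Gundy, and that you prove the sharp $L_2$-increment bound $\lesssim|x-x'|$ via $\min(1,j^2|x-x'|^2)\le|j|\,|x-x'|$ and let Kolmogorov absorb the loss, whereas the paper interpolates $|e^n_j(x)-e^n_j(z)|$ between the $\bB$ and $\cC^1$ bounds directly at the target exponent $1-2\theta$.
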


\begin{proof}
For $t \in [0,1]$, $x,z \in \T$,  by the Burkholder-Davis-Gundy inequality we have 
\begin{equs}
\| O^n_t(x)-O^n_t(z)\|_{L_p(\Omega)}^2 \lesssim \int_0^t  \| p^n_{\kappa_n(t-s)} (x, \cdot)- p^n_{\kappa_n(t-s)} (z, \cdot) \|_{L_2(\T)}^2 \, ds 
\end{equs}
By \eqref{eq:Pn-exponential} and the bounds $\| e_n\|_{\mathbb{B}(\T)} \lesssim 1$,  $\| e_n\|_{\C^1(\T)} \lesssim n$, we have 
\begin{equs}
\| p^n_{\kappa_n(t-s)} (x, \cdot)- p^n_{\kappa_n(t-s)} (z, \cdot) \|_{L_2(\T)}^2 & = \sum_{j=-n}^{n-1} |1+h \lambda^n_j|^{2\kappa_n(t-s)  h^{-1}} |e^n_j(x)- e^n_j(z)|^2 
\\
&\leq \sum_{j=-n}^{n-1}  e^{-2\kappa_n(t-s)\delta j^2}|x-z|^{(1-2\theta)} n^{(1-2\theta)}.
\end{equs}
Consequently,  by Proposition \ref{prop:summation} we get 
\begin{equs}
 \| O^n_t(x)  -O^n_t(z)\|_{L_p(\Omega)}^2
& \lesssim  \int_0^{(t-2h)\vee 0}\sum_{j=-n}^{n-1}  e^{-2\kappa_n(t-s)\delta j^2}|x-z|^{(1-2\theta)} n^{(1-2\theta)} \, ds 
\\
& \qquad + \int_{(t-2h)\vee 0}^t \sum_{j=-n}^{n-1}  |x-z|^{(1-2\theta)} n^{(1-2\theta)} \, ds
\\
& \lesssim  |x-z|^{1-2\theta} \int_0^{(t-2h)\vee 0}   |t-s|^{-1+2\theta} \, ds +N(p)  |x-z|^{(1-2\theta)}  n^{-2\theta} 
\\
& \lesssim   |x-z|^{1-2\theta}.
\end{equs}
Similarly, one sees that 
\begin{equs}
\| O^n_t(0)\|_{L_p(\Omega)}^2  \lesssim 1,
\end{equs}
which combined with the above estimate gives 
\begin{equs}
\| O^n_t\|_{\cC^{1/2-\theta}(\T;  L_p(\Omega))}  \lesssim 1.
\end{equs}
Since $p $ and $\theta$ are arbitrary, by Kolmogorov's continuity criterion  we get 
\begin{equs}
\| O^n_t\|_{L_p(\Omega; \cC^{1/2-\theta}(\T))}  \lesssim 1,
\end{equs}
and the claim follows since the bound does not depend on $t$ or $n$. 
\end{proof}

\begin{lemma}  \label{lem:uniform-C-1/2}
Under the assumption of Theorem \ref{thm:main-theorem} there exists a constant $N(c,\eps, K, p, \| b\|_{\mathbb{B}})$ such that 
\begin{equs}
\sup_{n \in \mathbb{N}} \sup_{t \in [0,1]} \| u^n_t \|_{L_p(\Omega; \cC^{1/2-\eps}(\T))}  \leq N. 
\end{equs}
\end{lemma}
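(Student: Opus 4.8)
The plan is to use the mild representation \eqref{eq:approx mild form} to split
\begin{equ}
u^n_t=\cP^n_t\psi^n+D^n_t+O^n_t,\qquad D^n_t(x):=\int_0^t\cP^n_{\kappa_n(t-s)}b(u^n_{\kappa_n(s)})(x)\,ds,
\end{equ}
and to bound the three pieces separately in $L_p(\Omega;\cC^{1/2-\eps}(\T))$, uniformly in $n$ and $t\in[0,1]$. For the initial-condition term I would first extend $\psi^n$ from $\Pi_n$ to a function $\bar\psi^n$ on $\T$ by piecewise linear interpolation, so that $\|\bar\psi^n\|_{\cC^{1/2-\eps}(\T)}\lesssim\|\psi^n\|_{\cC^{1/2-\eps}(\Pi_n)}$ pathwise; since $\cP^n$ only sees grid values through $\rho_n$, one has $\cP^n_t\psi^n=\cP^n_t\bar\psi^n$, and Lemma \ref{lem:preservation-reg-T} gives $\|\cP^n_t\psi^n\|_{\cC^{1/2-\eps}(\T)}\lesssim\|\psi^n\|_{\cC^{1/2-\eps}(\Pi_n)}$, which after taking $L_p$ norms is $\leq NK$ by hypothesis. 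The stochastic-convolution term is immediate from Lemma \ref{lem:regularity-dis-OU} with $\theta=\eps$.

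The real work is the drift term $D^n_t$, and this is the one step requiring care. A naive estimate through Lemma \ref{lem:Discrete-HK-bounds} would carry a factor $(\log 2n)^{(1/2-\eps)/2}$, which is fatal for a bound uniform in $n$. To bypass this I would work directly in the spectral representation
\begin{equ}
\cP^n_\tau g(x)=\sum_{j=-n}^{n-1}(1+h\lambda^n_j)^{\tau h^{-1}}e^n_j(x)\,\hat g_j,\qquad \hat g_j:=\int_\T\overline{e^n_j(\rho_n(y))}\,g(\rho_n(y))\,dy,\quad \tau\in\Lambda_n.
\end{equ}
Since $|e^n_j|=1$ on $\Pi_n$, the coefficients of the bounded function $g_s:=b(u^n_{\kappa_n(s)})$ satisfy $|\hat g_{s,j}|\leq\|b\|_{\bB(\R)}$ pathwise, uniformly in $s,j$. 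Combining this with the increment bound $|e^n_j(x)-e^n_j(z)|\lesssim(|j|\,|x-z|)^{1/2-\eps}$ (which follows from $\|e^n_j\|_{\cC^1}\lesssim|j|$ and elementary interpolation), the exponential estimate \eqref{eq:Pn-exponential}, and Proposition \ref{prop:summation} with $\gamma=1/2-\eps$, one obtains for $\kappa_n(t-s)>0$
\begin{equ}
|\cP^n_{\kappa_n(t-s)}g_s(x)-\cP^n_{\kappa_n(t-s)}g_s(z)|\lesssim\|b\|_{\bB(\R)}\,|x-z|^{1/2-\eps}\,(\kappa_n(t-s))^{-3/4+\eps/2},
\end{equ}
with no logarithmic factor. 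As $-3/4+\eps/2>-1$ and $\kappa_n(t-s)\gtrsim t-s$ for $t-s\geq h$, integrating in $s$ over $[0,t-h]$ produces a contribution $\lesssim\|b\|_{\bB(\R)}|x-z|^{1/2-\eps}$.

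On the remaining interval $s\in(t-h,t]$ one has $\kappa_n(t-s)=0$, so all spectral weights equal $1$ and the same computation yields $\sum_{j=-n}^{n-1}|j|^{1/2-\eps}\lesssim n^{3/2-\eps}$; since this interval has length $\lesssim h\lesssim n^{-2}$, its contribution is $\lesssim\|b\|_{\bB(\R)}|x-z|^{1/2-\eps}n^{-1/2-\eps}$, hence bounded. Together with the trivial bound $\|D^n_t\|_{\bB(\T)}\leq\|b\|_{\bB(\R)}$ coming from contractivity of $\cP^n$ on $\bB$, this gives $\|D^n_t\|_{\cC^{1/2-\eps}(\T)}\lesssim\|b\|_{\bB(\R)}$ pathwise, and therefore in $L_p(\Omega)$, uniformly in $n$ and $t$. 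Summing the three bounds yields the claim with $N=N(c,\eps,K,p,\|b\|_{\bB(\R)})$. The only genuine obstacle is the drift estimate: one must avoid Lemma \ref{lem:Discrete-HK-bounds} and instead extract the regularity from the eigenfunction increments and the Gaussian-type summation of Proposition \ref{prop:summation}, using that the bounded drift has uniformly bounded discrete Fourier coefficients. Note that no Gr\"onwall argument is needed, since boundedness of $b$ makes the drift bound independent of the (a priori unknown) regularity of $u^n$.
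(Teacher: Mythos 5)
Your proof is correct, but it takes a genuinely different route from the paper's. The paper never estimates the drift integral at all: it sets $v^n_t=\cP^n_t\psi^n+O^n_t$, bounds $v^n$ exactly as you do via Lemma \ref{lem:preservation-reg-T} and Lemma \ref{lem:regularity-dis-OU}, and then invokes Girsanov's theorem --- under the measure $\tilde\bP$ with density \eqref{eq:rad-nyk} the law of $u^n$ coincides with the law of $v^n$ under $\bP$, and a Cauchy--Schwarz argument together with $\E\rho^{-1}\leq N(\|b\|_{\bB(\R)})$ transfers the moment bound back to $u^n$. You instead bound the drift term $D^n_t$ directly and pathwise, and the step that makes this work (and that a reader should check) is precisely the one you flag: since only the H\"older exponent $1/2-\eps<1$ is needed, the crude spectral estimate --- $|\hat g_{s,j}|\leq\|b\|_{\bB(\R)}$, $|e^n_j(x)-e^n_j(z)|\lesssim(|j|\,|x-z|)^{1/2-\eps}$, \eqref{eq:Pn-exponential}, and Proposition \ref{prop:summation} --- produces the time singularity $\kappa_n(t-s)^{-3/4+\eps/2}$, which is far from the optimal $(t-s)^{-1/4+\eps/2}$ of a true heat-kernel bound but still integrable, so the loss is harmless after integration in $s$; this is exactly why you may bypass Lemma \ref{lem:Discrete-HK-bounds} and its $\log(2n)$ factor, which arises there only because the $\beta=1$ endpoint is out of reach of this naive spectral argument. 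What each approach buys: yours is elementary and deterministic on the drift part (the bound depends linearly and explicitly on $\|b\|_{\bB(\R)}$, needs no change of measure and no exponential-moment estimate for $\rho^{-1}$), while the paper's is much shorter, needs no additional kernel estimates, and reuses the identical Girsanov device in Corollary \ref{col:quadrature}, so it comes essentially for free there. A minor further difference in your favour: your treatment of the initial-condition term (replacing $\psi^n$ by its piecewise linear interpolant $\bar\psi^n$, which leaves $\cP^n_t\psi^n$ unchanged, so that Lemma \ref{lem:preservation-reg-T}, stated for $\cC^{1/2-\eps}(\T)$ data, applies under the hypothesis that only controls $\|\psi^n\|_{\cC^{1/2-\eps}(\Pi_n)}$) is more careful than the paper's, which applies that lemma directly.
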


\begin{proof}
Let us set denote  $v^n_t = \cP^n_t \psi^n+O^n_t$. The conclusion of the lemma with $u^n$ replaced by $v^n$  is an immediate consequence of Lemma \ref{lem:preservation-reg-T} and Lemma \ref{lem:regularity-dis-OU}. 

From Girsanov's theorem (see e.g. \cite[Thm~10.14]{DPZ} for a sufficiently general version) one has that
under the measure $\tilde\bP$ defined by
\begin{equ}\label{eq:rad-nyk}
\frac{d\tilde\bP}{d\bP}=\rho=\exp\Big(-\int_0^1\int_\T b\big(u^n_t(x)\big)\,\xi(dx, dt)-\frac{1}{2}\int_0^1\int_\T \big|b\big(u^n_t(x)\big)\big|^2\,dx\,dt\Big),
\end{equ}
the mapping
\begin{equ}
A \mapsto\xi(A)+\scal{b(u^n),\bone_A}_{L_2([0,1]\times \T)}
\end{equ}
from $\cB([0,1]\times \T)$ to $L_2(\Omega)$
defines a white noise. In particular, the law of $u^n$ under $\tilde\bP$ and the law of $v^n$ under $\bP$ coincide.
It is also an easy exercise that 
$\E\rho^{-1}\leq N(\|b\|_{\mathbb{B}})< \infty$. 
Therefore,
\begin{equs}
\| u^n_t \|^p_{L_p(\Omega; \cC^{1/2-\eps}(\T))} =\tilde\E\big(\| u^n_t \|^p_ {\cC^{1/2-\eps}(\T))}\rho^{-1}\big)&\leq \big(\tilde \E \| u^n_t \|^{2p}_ {\cC^{1/2-\eps}(\T))}\big)^{1/2}\big(\tilde\E\rho^{-2}\big)^{1/2}
\\
&=\big(\E\| v^n_t \|^{2p}_ {\cC^{1/2-\eps}(\T))}\big)^{1/2}\big(\E\rho^{-1}\big)^{1/2}
\leq N.
\end{equs}
This finishes the proof.  
\end{proof}

\section{The sewing strategy}
As already discussed,  one of the main tools for proving our main results is the stochastic sewing lemma. First we give an outline of the strategy, identify the various terms to be bounded, and then carry out the estimates.
\subsection{Overview}\label{sec:outline}

Here we give a brief overview of the strategy of the proof. For reference, we will compare to the $1$-dimensional additive SDE
\begin{equ}
dX_t=f(X_t)\,dt+dW_t
\end{equ}
driven by a standard Wiener process $W$. Let us assume $f\in \cC^\alpha(\R)$ with some $\alpha\in(0,1)$. The Euler-Maruyama approximation of the SDE reads as
\begin{equ}
dX^n_t=f(X^n_{\hat\kappa_n(t)})\,dt+dW_t,
\end{equ}
where we briefly use the notation $\hat\kappa_n(t)=\lfloor nt\rfloor n^{-1}$. Assuming identical initial conditions, one can decompose the error as
\begin{equ}\label{eq:simple-error}
X_t-X^n_t=\int_0^tf(X_s)-f(X^n_s)\,ds+\int_0^tf(X^n_s)-f(X^n_{\hat\kappa_n(s)})\,ds.
\end{equ}
One then aims to bound the first term by $|X-X^n|$ with \emph{some} norm $|\cdot|$ and the second by a negative power of $n$, which can in fact be $n^{-(1+\alpha)/2}$. If one furthermore achieves a small constant (say, less than $1/2$) in the first bound, then the inequality buckles and the error itself is bounded by $n^{-(1+\alpha)/2}$.

Of course neither of these tasks are really obvious, since simply bounding the integrals by bringing the absolute value inside gives the bounds $t\|X-X^n\|_{L_\infty([0,t])}^\alpha$ and $n^{-\alpha}$, respectively. The former is particularly problematic, since buckling arguments (or equivalently, Gronwall-type lemmas) fail for powers strictly less than $1$. 
In \cite{BDG} this issue is overcome by stochastic sewing approach, which however requires to work with a stronger norm: the choice $|\cdot|=\|\cdot\|_{\cC^{1/2}([0, 1]; L_p(\Omega))}$ suffices for example. On one hand, this has the advantage of providing the final error estimates in a strong norm, the drawback is that instead of \eqref{eq:simple-error} one has to control the increments of the error as well.

In infinite dimensions there are several issues with this strategy.  We have 
\begin{equs}
u_t-u^n_t= \cP_t\psi-\cP^n_t\psi^n & +  \int_0^t p_{t-r} *\big( b (u_r)\big) \, dr - \int_0^t p^n_{\kappa_n(t-r) }*_n\big( b(u^n_{\kappa_n(r)}) \big)\, dr
+O_t-O^n_t.
\end{equs}
First, the quantity $u_t-u_s$ does not have a natural form as an integral from $s$ to $t$.
Second, even if one considers the ``mild'' increments $u_t-\cP_{t-s}u_s$, there is no nice analogous increment for the approximate solution.
Instead, we study the quantity  
\begin{equ}\label{eq:main-error-guy}
\cE_{s,t}^n=\int_s^t p_{t-r} *\big( b (u_r)\big) \, dr - \int_s^t p^n_{\kappa_n(t-r) }*_n\big( b(u^n_{\kappa_n(r)}) \big)\, dr.
\end{equ}
The above is not an increment (not even mild), however, it is 
an analogue of the  increments of the  right-hand side  of \eqref{eq:simple-error},  in the infinite dimensional case,  which serves its purpose.

We will use the decomposition
\begin{equs}[eq:main-error-decomposition]
\cE_{s,t}^n=\cE_{s,t}^{n,1}+\cE_{s,t}^{n,2}+\cE_{s,t}^{n,3}
&:=\int_s^t \Big( p_{t-r} *  \big( b(u_r)-b(u^n_r) \big) \Big) \,dr  \\
&\qquad+ \int_s^t \Big(  p_{t-r} *\big(b (u^n_r)\big) - p_{t-r} *_n \big(b (u^n_{\kappa_n(r)} )  \big)\Big) \, dr 
\\
&\qquad+ \int_s^t \Big( \big(p_{t-r}  -  p^n_{\kappa_n(t-r)}\big) *_n\big(b (u^n_{\kappa_n(r)}) \big)\Big)  \, dr.
\end{equs}
Our goal will be to estimate the term $\cE^{n,1}$ in terms of $\cE^n$, which will lead to buckling for $\cE^n$, and the remaining $\cE^{n,2}, \cE^{n,3}$ by some power of $n$. Both of these steps will be achieved by  the stochastic sewing lemma. Finally, notice that the above procedure  will give an estimate for $\cE^n$ and not $u-u^n$ itself. The reason that we follow this route  will become clearer later, see Remark \ref{rem:buckling-remark}.

We now recall the stochastic sewing lemma. The notation $[a, b]_{<}$ below  stands for $\{ (s, t) \in [a, b]^2 : s<t\}$. 
\begin{theorem} [{\cite[Theorem 2.4]{Khoa}}]           \label{thm:SSL}
Let $p\geq 2$, $0\leq s'\leq t'\leq 1$ and let $A_{\cdot,\cdot}$ be a function $[s',t']_<\to L_p(\Omega)$ such that for any $(s,t)\in[s',t']_<$ the random variable $A_{s,t}$ is $\F_t$-measurable. Suppose that for some $\eps_1,\eps_2>0$ and $C_1,C_2$ the bounds
\begin{equs}
\|A_{s,t}\|_{L_p(\Omega)} & \leq C_1|t-s|^{1/2+\eps_1},\label{eq:SSL-cond1}
\\
\|\E^s\delta A_{s,u,t}\|_{L_p(\Omega)} & \leq C_2 |t-s|^{1+\eps_2}\label{eq:SSL-cond2}
\end{equs}
hold for all $s' \leq s \leq u \leq t \leq t'$, where $\delta A_{s,u,t}:=A_{s, t}-A_{s, u}-A_{u, t}$. 
Then there exists a unique map  $\A:[s',t']\to L_p(\Omega)$ 
with the following three properties: 
\begin{enumerate}[(i)]
\item With probability one,  $\A_{s'}=0$. 

\item $ \A_t$ is $\cF_t$-measurable for all $t \in [s', t']$,

\item   There exists constants  $K_1,K_2>0$ such that for all $(s, t) \in [s', t']_<$ we have 
\begin{equs}
\|\A_t	-\A_s-A_{s,t}\|_{L_p(\Omega)} & \leq K_1 |t-s|^{1/2+\eps_1},         \label{eq:SSL-conc1}
\\
\|\E^s\big(\A_t	-\A_s-A_{s,t}\big)\|_{L_p(\Omega)} & \leq K_2|t-s|^{1+\eps_2}.  \label{eq:SSL-conc2}
\end{equs}
\end{enumerate}
In addition,  there exists a constant $K>0$ depending only on $\eps_1,\eps_2$ and $p$,  such that for all $(s, t) \in [s', t']_<$ we have 
\begin{equation}\label{eq:SSL-conc3}
\|\A_t-\A_s\|_{L_p(\Omega)}  \leq  KC_1 |t-s|^{1/2+\eps_1}+KC_2 |t-s|^{1+\eps_2}.
\end{equation}
\end{theorem}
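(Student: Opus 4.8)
The plan is to build $\A$ as the $L_p(\Omega)$-limit of Riemann-type sums along dyadic partitions, exploiting the martingale-difference structure hidden in \eqref{eq:SSL-cond2} together with the Burkholder--Davis--Gundy (BDG) inequality. Fix $(s,t)\in[s',t']_<$ and for $k\geq 0$ let $\pi_k=\{s=r_0<r_1<\dots<r_{2^k}=t\}$ be the $k$-th dyadic partition of $[s,t]$, so $|r_{i+1}-r_i|=|t-s|2^{-k}$, and set $A^{\pi_k}_{s,t}=\sum_{i=0}^{2^k-1}A_{r_i,r_{i+1}}$. Since passing from $\pi_k$ to $\pi_{k+1}$ only inserts the midpoints $m_i$ of each $[r_i,r_{i+1}]$, one has the refinement identity
\begin{equ}
A^{\pi_{k+1}}_{s,t}-A^{\pi_k}_{s,t}=-\sum_{i=0}^{2^k-1}\delta A_{r_i,m_i,r_{i+1}},
\end{equ}
and the whole argument rests on estimating this difference.

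First I would split each increment as $\delta A_{r_i,m_i,r_{i+1}}=\E^{r_i}\delta A_{r_i,m_i,r_{i+1}}+\zeta_i$, where $\zeta_i:=\delta A_{r_i,m_i,r_{i+1}}-\E^{r_i}\delta A_{r_i,m_i,r_{i+1}}$ satisfies $\E^{r_i}\zeta_i=0$ and, by the adaptedness hypothesis, is $\cF_{r_{i+1}}$-measurable. The conditionally centred terms are handled directly by the triangle inequality and \eqref{eq:SSL-cond2}, giving $\bigl\|\sum_i\E^{r_i}\delta A_{r_i,m_i,r_{i+1}}\bigr\|_{L_p(\Omega)}\lesssim C_2|t-s|^{1+\eps_2}2^{-k\eps_2}$. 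For the $\zeta_i$ the key observation is that $(\zeta_i)_i$ is a martingale difference sequence with respect to $(\cF_{r_i})_i$; hence the discrete BDG inequality (valid for $p\geq 2$) followed by Minkowski's inequality in $L_{p/2}(\Omega)$ yields
\begin{equ}
\Bigl\|\sum_i\zeta_i\Bigr\|_{L_p(\Omega)}\lesssim\Bigl(\sum_i\|\zeta_i\|_{L_p(\Omega)}^2\Bigr)^{1/2}\lesssim C_1|t-s|^{1/2+\eps_1}2^{-k\eps_1},
\end{equ}
where each $\|\zeta_i\|_{L_p(\Omega)}\leq 2\|\delta A_{r_i,m_i,r_{i+1}}\|_{L_p(\Omega)}$ is bounded by three applications of \eqref{eq:SSL-cond1}. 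Summing the two displays over $k$ shows $(A^{\pi_k}_{s,t})_k$ is Cauchy in $L_p(\Omega)$; I define $\A_{s,t}$ as its limit and $\A_t:=\A_{s',t}$.

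Next I would check the structural properties. Additivity $\A_{s,u}+\A_{u,t}=\A_{s,t}$ follows since any fixed $u$ eventually lies on the dyadic grid, and $\A_t$ is $\cF_t$-measurable as an $L_p$-limit of $\cF_t$-measurable sums, with $\A_{s'}=0$. Because $A^{\pi_0}_{s,t}=A_{s,t}$, telescoping the two refinement bounds and summing the geometric series in $2^{-k\eps_1}$ and $2^{-k\eps_2}$ gives $\|\A_{s,t}-A_{s,t}\|_{L_p(\Omega)}\lesssim C_1|t-s|^{1/2+\eps_1}+C_2|t-s|^{1+\eps_2}$; using $|t-s|\leq 1$ this yields \eqref{eq:SSL-conc1}, and combined with \eqref{eq:SSL-cond1} it yields the final bound \eqref{eq:SSL-conc3}. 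For \eqref{eq:SSL-conc2} I would note that the $\zeta_i$-contributions are conditionally centred, so applying $\E^s$ annihilates them in the limit and leaves only the conditional-expectation parts, which sum to the $|t-s|^{1+\eps_2}$ estimate.

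Finally, for uniqueness suppose $\tilde\A$ also satisfies (i)--(iii) and set $D_t=\A_t-\tilde\A_t$. Then $D_{s'}=0$, the germ $D_t-D_s$ has vanishing $\delta$, and from \eqref{eq:SSL-conc1}--\eqref{eq:SSL-conc2} it inherits $\|D_t-D_s\|_{L_p(\Omega)}\lesssim|t-s|^{1/2+\eps_1}$ and $\|\E^s(D_t-D_s)\|_{L_p(\Omega)}\lesssim|t-s|^{1+\eps_2}$. Running the same dyadic decomposition on $D_t-D_s=\sum_i(D_{r_{i+1}}-D_{r_i})$ and letting $k\to\infty$ forces $D_t-D_s=0$, hence $D\equiv 0$. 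I expect the main obstacle to be the martingale-part estimate: the naive bound $2^k\cdot|t-s|^{1/2+\eps_1}2^{-k(1/2+\eps_1)}$ would diverge, and it is precisely the adaptedness of $A_{s,t}$ and the conditional centring that upgrade it, via BDG, to the convergent square-summable bound. This orthogonalisation across the partition is what makes the $1/2$-exponent threshold work and is the genuinely \emph{stochastic} ingredient distinguishing this from the deterministic sewing lemma.
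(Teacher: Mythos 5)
The paper itself does not prove Theorem \ref{thm:SSL}: it imports it verbatim from L\^e \cite{Khoa} (the remark following the statement even explains that an earlier draft proved a semigroup variant, but the final version deliberately reduces everything to the original lemma). So your proposal has to be measured against the standard proof of the stochastic sewing lemma, and your strategy is indeed that proof's core: dyadic Riemann sums, the refinement identity, splitting each $\delta A_{r_i,m_i,r_{i+1}}$ into its conditional mean (estimated termwise by \eqref{eq:SSL-cond2}) plus a martingale-difference part (estimated by discrete BDG and \eqref{eq:SSL-cond1}), and uniqueness by running the same two-part decomposition on the increments of the difference of two candidates. Those estimates, and the uniqueness argument, are correct; you also correctly identify the conditional orthogonalisation as the genuinely stochastic ingredient.

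There is, however, a genuine gap at the additivity step, and it is not cosmetic. Your construction produces, for each fixed pair $(s,t)$, a random variable $\A_{s,t}$ as the limit of Riemann sums along the dyadic partitions \emph{of} $[s,t]$, and your telescoping bound controls $\|\A_{s,t}-A_{s,t}\|_{L_p(\Omega)}$. But the conclusions \eqref{eq:SSL-conc1}--\eqref{eq:SSL-conc2} concern $\A_t-\A_s-A_{s,t}$ with $\A_t:=\A_{s',t}$, so you need the identity $\A_{s',t}-\A_{s',s}=\A_{s,t}$, i.e.\ additivity of the two-parameter family. Your justification --- ``any fixed $u$ eventually lies on the dyadic grid'' --- is false: the level-$k$ dyadic points of $[s',t]$ are $s'+i(t-s')2^{-k}$, so a given $s$ lies on some grid only when $(s-s')/(t-s')$ is a dyadic rational. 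Concatenating the dyadic partitions of $[s',s]$ and of $[s,t]$ gives a sequence of partitions of $[s',t]$ that is \emph{not} a subsequence of the dyadic one, so to identify its limit with $\A_{s',t}$ you must prove that the limit of Riemann sums is independent of the sequence of partitions with vanishing mesh. This partition-independence is a real additional argument: one must compare an arbitrary partition with an arbitrary refinement of it (not merely the midpoint refinement), and the inserted points have to be organised across intervals so that the conditionally centred contributions still form martingale difference sequences to which BDG applies --- inserting points one at a time and summing the resulting $O(2^k)$ errors destroys exactly the square-root gain you emphasise. Handling general partitions/refinements is precisely the technical bulk of the proof in \cite{Khoa}; once it is in place, your per-interval estimates and the uniqueness argument go through verbatim.

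Two minor points. First, your absorption of the two-term bound $C_1|t-s|^{1/2+\eps_1}+C_2|t-s|^{1+\eps_2}$ into the single term of \eqref{eq:SSL-conc1} ``using $|t-s|\le 1$'' requires $\eps_1\le 1/2+\eps_2$; in the opposite regime no constant $K_1$ works, since by \eqref{eq:SSL-conc2} the error is genuinely of order $|t-s|^{1+\eps_2}$. This caveat is really inherited from the statement as quoted (L\^e's formulation keeps both terms in the analogue of \eqref{eq:SSL-conc1}) and is harmless for this paper's applications, where $\eps_1$ is small. Second, you call the terms $\E^{r_i}\delta A_{r_i,m_i,r_{i+1}}$ ``conditionally centred''; they are the conditional means --- the centred parts are the $\zeta_i$ --- though the estimates you attach to each are the right ones.
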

\begin{remark}
It can be sometimes convenient to incorporate the semigroup of the (linear part of the) equation into the the formulation of the stochastic sewing lemma, as in \cite{mild-sewing}.
This is indeed how the first version of the present paper proceeded, but as noted by a referee, it is easier to reduce the argument to the original stochastic sewing lemma, similarly to \cite{ABLM}.
\end{remark}

\subsection{Estimate for $\cE^{n,1}$}

Let us  introduce the following (semi)norms.
Let $(s',t')\in[0,1]_<$ and $p\in[2,\infty]$. For a map $\varphi : [s',t'] \times \T \to L_p (\Omega)$  we set
\begin{equ}
\|\varphi \|_{\scC^0_p[s',t']}=\sup_{x\in\T}\sup_{s\in[s',t']}\| \varphi _s(x)\|_{L_p(\Omega)}.
\end{equ}
Furthermore, for  $\alpha\in(0,1]$ and a map $\varphi : [s',t']_<  \times \mathbb{T} \to L_p (\Omega)$ we set
\begin{equs}
\,[\varphi]_{\scC^\alpha_p[s',t']_<}=\sup_{x\in\T}\sup_{(s,t)\in[s',t']_{<}}\frac{\|\varphi_{s,t}(x)\|_{L_p(\Omega)}}{|t-s|^\alpha}.
\end{equs}
Although our goal is to bound $\cE^{n,1}_{s,t}$, it is useful to introduce the generalised quantity
\begin{equ}
\cE^{n,1}_{s,t}[f]=\int_{s}^t \cP_{t-r}\big(  f(u_r)  - f(u^n_r)  \big)\, dr,  \qquad f \in \mathbb{B}(\R) .
\end{equ}

\begin{lemma}    \label{lem:regularisation-lemma}
Let $\tau\in(1/4,3/4)$   and $(s',t')\in[0,1]_<$. 
Then, under the assumption of Theorem \ref{thm:main-theorem},  for all $f \in \mathbb{B}(\R)$, $n\in\N$, and  $(s,t)\in[s',t']_<$ the following bound holds
\begin{equs}
\sup_{x\in\T} \| \cE^{n,1}_{s,t}[f] (x)\|_{L_p(\Omega)}
\leq& N \| f\|_{\mathbb{B}(\R)} |t-s|^{3/4} \big( n^{-1/2+ \eps}+\sup_{x \in \T}\| \psi (x)  -\psi^n (x)\|_{L_p(\Omega)}+\| \mathcal{E}^n_{0, \cdot}  \|_{\mathscr{C}^0_p[s', t']} \big)    
\\
& +  N \| f\|_{\mathbb{B}(\R)} |t-s|^{3/4+\tau}    [ \mathcal{E}^n ]_{\mathscr{C}^\tau_p[s', t']_<},  \label{eq:reg bound}
\end{equs}
where the constant $N$ depends only on  $\|b\|_{\mathbb{B}(\R)},c, \eps, p, K$,  and $\tau$.
\end{lemma}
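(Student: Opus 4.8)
The plan is to recognise $\cE^{n,1}_{s,t}[f]$ as a sewn object whose associated germ has a vanishing conditional‑expectation increment, and thereby reduce the entire statement to a one‑point regularisation estimate for $\E^a[f(u_r)-f(u^n_r)]$. Fix the terminal time $t$ and a point $x\in\T$, and for $s'\le a\le b\le t$ set
\[
A_{a,b}=\int_a^b \cP_{t-r}\,\E^a\big[f(u_r)-f(u^n_r)\big](x)\,dr,\qquad R_{a,b}=\int_a^b \cP_{t-r}\big(f(u_r)-f(u^n_r)\big)(x)\,dr.
\]
Then $R$ is exactly additive, so $\cA_\theta:=\int_{s'}^\theta\cP_{t-r}(f(u_r)-f(u^n_r))(x)\,dr$ is $\cF_\theta$‑measurable with $\cA_{s'}=0$ and $\cA_t-\cA_s=R_{s,t}=\cE^{n,1}_{s,t}[f](x)$. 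A direct computation gives $\delta A_{a,u,b}=\int_u^b\cP_{t-r}(\E^a-\E^u)[\cdots]\,dr$, whence $\E^a\delta A_{a,u,b}=0$; thus \eqref{eq:SSL-cond2} holds with $C_2=0$ for any $\eps_2$. Since also $\E^a(R_{a,b}-A_{a,b})=0$ and $\|R_{a,b}-A_{a,b}\|_{L_p}\lesssim\|f\|_{\bB(\R)}|b-a|$, the uniqueness in Theorem \ref{thm:SSL} identifies $\cA$ with $R$, and \eqref{eq:SSL-conc3} controls $\|\cE^{n,1}_{s,t}[f](x)\|_{L_p}$ by the germ constant. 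Because $C_2=0$ the sewing is driven purely by the first bound, so a \emph{two‑scale} estimate of $\|A_{a,b}\|_{L_p}$ (a routine refinement of the lemma in this degenerate case) produces a two‑scale conclusion with matching powers; this is the mechanism by which the sharp exponents $3/4$ and $3/4+\tau$ of \eqref{eq:reg bound} will arise.

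It remains to bound the germ. Since $p_{t-r}$ integrates to one, Minkowski's inequality yields
\[
\|A_{a,b}\|_{L_p}\le \int_a^b \sup_{y\in\T}\big\|\E^a[f(u_r(y))-f(u^n_r(y))]\big\|_{L_p}\,dr,
\]
and everything reduces to the conditional one‑point estimate of the integrand. Using the transition formula \eqref{eq:OU-transition-main} and its discrete counterpart \eqref{eq:OUn-transition-main}, I would decompose $u_r(y)$ (resp.\ $u^n_r(y)$) into an $\cF_a$‑measurable ``centre'' plus an independent Gaussian of variance $Q(r-a)$ (resp.\ $Q^n(r-a)$) plus a bounded, path‑dependent drift. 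The drift blocks a direct use of \eqref{eq:OU-transition-main}; I would remove it by a change of measure on $[a,r]$ conditional on $\cF_a$ (as in the Girsanov step of Lemma \ref{lem:uniform-C-1/2}), after which the conditional laws are exactly Gaussian and $\E^a f(u_r(y))$ equals $\cP^{\R}_{Q(r-a)}f$ at the centre up to a Radon–Nikodym remainder. The key point is that the bounded drift then re‑enters only as a shift of the centre, and the difference of the two drift shifts is precisely the increment $\cE^n_{a,r}$ from \eqref{eq:main-error-decomposition}.

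Subtracting the two Gaussian expressions yields two kinds of terms. A \emph{variance mismatch}, controlled by $\|f\|_{\bB(\R)}$ times $|Q-Q^n|/\sqrt{r-a}$, which by Lemma \ref{lem:Q-difference} is $\lesssim\|f\|_{\bB(\R)}n^{-\beta/2}(r-a)^{-\beta/4}$; and a \emph{centre mismatch}, bounded by $\|\nabla\cP^{\R}_{Q(r-a)}f\|_{\bB(\R)}\lesssim\|f\|_{\bB(\R)}(r-a)^{-1/4}$ (using the gradient bound with $\alpha=0,\beta=1$ together with $Q(r-a)\sim\sqrt{r-a}$ from \eqref{eq:Q-easy-bound-first}) times the $L_p$‑norm of the centre difference. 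Writing the centre difference through the mild additivity $\cE^n_{0,r}=\cP_{r-a}\cE^n_{0,a}+\cE^n_{a,r}$, the deterministic kernel error (Lemma \ref{lem:det-rate} with $\alpha=1/2-\eps$, giving $n^{-1/2+\eps}$, and Lemma \ref{lem:preservation-reg-T}) and the stochastic convolution error \eqref{eq:Estimate-BDG}, it splits into an $\cF_a$‑measurable part of size $\Theta^0:=n^{-1/2+\eps}+\sup_x\|\psi(x)-\psi^n(x)\|_{L_p}+\|\cE^n_{0,\cdot}\|_{\scC^0_p[s',t']}$ and the increment $\cE^n_{a,r}$ of size $(r-a)^\tau[\cE^n]_{\scC^\tau_p[s',t']_<}$. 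Altogether
\[
\sup_{y}\big\|\E^a[f(u_r(y))-f(u^n_r(y))]\big\|_{L_p}\lesssim \|f\|_{\bB(\R)}\Big((r-a)^{-1/4}\Theta^0+(r-a)^{-1/4+\tau}[\cE^n]_{\scC^\tau_p}+n^{-\beta/2}(r-a)^{-\beta/4}\Big).
\]

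Finally I would integrate in $r$. The three contributions give $|b-a|^{3/4}\Theta^0$, $|b-a|^{3/4+\tau}[\cE^n]_{\scC^\tau_p}$ (here $\tau>1/4$ serves only to keep the exponent positive and the integral convergent), and $n^{-\beta/2}|b-a|^{1-\beta/4}$; the choice $\beta=1-2\eps$ turns the last into $n^{-1/2+\eps}|b-a|^{3/4+\eps/2}$, absorbed into the $\Theta^0$ scale, and the same $\beta$ makes the stochastic convolution error contribute at rate $n^{-1/2+\eps}$. This is exactly the two‑scale germ bound, which fed into the sewing step produces \eqref{eq:reg bound}. The main obstacle is the conditional one‑point estimate: since $f$ is merely bounded there is no Taylor expansion available, so all regularisation must be extracted from the Gaussian smoothing, and the path‑dependent drift (Girsanov, with its remainder reorganised into the centre shift $\cE^n_{a,r}$), the variance mismatch $Q$ versus $Q^n$, and the logarithmic and short‑time ($r-a<h$) corrections in the discrete heat‑kernel bounds must be tracked together.
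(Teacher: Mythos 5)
Your sewing skeleton is sound, and in one respect it is even cleaner than the paper's: with the conditioned germ $A_{a,b}=\int_a^b\cP_{t-r}\E^a\big[f(u_r)-f(u^n_r)\big](x)\,dr$ one indeed has $\E^a\delta A_{a,u,b}=0$ by the tower property, and since $R_{a,b}-A_{a,b}$ has vanishing $\cF_a$-conditional expectation and is $O(\|f\|_{\bB(\R)}|b-a|)$ in $L_p$, the identification of the sewn limit with $\cE^{n,1}_{s,t}[f](x)$ works without any Lipschitz mollification (the paper needs mollification, Girsanov absolute continuity and Fatou for exactly this step). But taking $C_2=0$ concentrates the entire difficulty into the germ bound, i.e.\ into your ``conditional one-point estimate'' for $\sup_y\|\E^a[f(u_r(y))-f(u^n_r(y))]\|_{L_p}$, and that is where the proposal breaks down. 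Under $\bP$, $u_r(y)$ is not conditionally Gaussian given $\cF_a$: it contains $\int_a^r\cP_{r-\theta}\big(b(u_\theta)\big)(y)\,d\theta$, which is correlated with the noise on $[a,r]$. Your fix, a conditional Girsanov transform, does make the conditional law Gaussian, but only under the new measure $\tilde\bP$; the germ is a $\bP$-conditional expectation, and converting back gives $\E^a[f(u_r(y))]=\tilde\E^a[f(u_r(y))\,\rho_{a,r}^{-1}]$, so the drift re-enters as the multiplicative density $\rho_{a,r}^{-1}$, \emph{not} ``only as a shift of the centre''. The resulting correction $\tilde\E^a\big[(f(u_r(y))-\tilde\E^a f(u_r(y)))(\rho_{a,r}^{-1}-1)\big]$ is of order $\|f\|_{\bB(\R)}\|b\|_{\bB(\R)}|r-a|^{1/2}$ and carries \emph{no} smallness in $n$; for it to cancel against the analogous correction for $u^n$ (whose density involves $b(u^n_{\kappa_n(\cdot)})$ integrated against the same noise) you would need to control $b(u_\theta)-b(u^n_{\kappa_n(\theta)})$ pointwise, which for merely measurable $b$ cannot be deduced from smallness of $u-u^n$ --- this is precisely the difficulty the lemma exists to circumvent, so the argument is circular.

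A second, independent error: the ``mild additivity'' $\cE^n_{0,r}=\cP_{r-a}\cE^n_{0,a}+\cE^n_{a,r}$ you invoke to organise the centre difference is false, because the discrete kernels do not intertwine with the continuous semigroup, $\cP_{r-a}\big(p^n_{\kappa_n(a-\theta)}\ast_n\cdot\big)\neq p^n_{\kappa_n(r-\theta)}\ast_n\cdot$; the paper stresses in Section \ref{sec:outline} that the approximation admits no good mild increment, which is why $\cE^n_{s,t}$ is introduced as a non-increment object in the first place. For contrast, the paper's germ is built not from $u_r,u^n_r$ but from $O_r+\phi_{s,r}$ and $O^n_r+\phi^n_{s,r}$, with the drift \emph{frozen} at its $\cF_s$-conditional expectation; then \eqref{eq:OU-transition-main} and \eqref{eq:OUn-transition-main} apply exactly, with no Girsanov remainder. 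The price is that $\E^s\delta A\neq 0$, and the buckling term $[\cE^n]_{\scC^\tau_p[s',t']_<}$ enters through the $C_2$-condition via $|\phi_{s,r}-\phi^n_{s,r}-\phi_{u,r}+\phi^n_{u,r}|\le 2\|\cE^n_{s,r}\|_{L_p(\Omega)}$, while the mismatch between the germ and the true increment is absorbed in the uniqueness part of Theorem \ref{thm:SSL}, where constants may depend on $\|f\|_{\cC^1(\R)}$ and are later removed. If you wish to keep your conditioned germ, you must supply an independent proof of the conditional one-point estimate; but that statement is essentially equivalent in strength to the lemma itself, so nothing has been gained.
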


\begin{remark}  \label{rem:buckling-remark}
Notice that  the right-hand side contains the term $ [ \mathcal{E}^n ]_{\mathscr{C}^\tau_p[s', t']_<}$,  where $\tau>1/4$. This is the reason that we aim to buckle for $\cE^n$ and not $u-u^n$ itself, as  the latter has no more than $1/4 $ regularity in time, because of the term $O-O^n$. 
\end{remark}

\begin{proof}

By linearity in $f$, we may and will assume $\|f\|_{\mathbb{B}(\R)}=1$. 
We first assume that $f$ is in addition Lipschitz, derive the bound \eqref{eq:reg bound} that does not depend on its Lipschitz norm, and then conclude with a standard approximation argument.
We fix $x \in \mathbb{T}$,  and for $(s,t)\in[s',t']_<$ we define 
\begin{equs}
A_{s, t} (x) :=\E^s \int_s^t \Big( \cP_{t'-r}  \big( f  (O_r+ \phi_{s, r} \big) ) -\cP_{t'-r} \big( f  (O^n_r+\phi^n_{s,r}) \big )  \Big) (x)\,dr,
\end{equs}
where 
\begin{equs}
\phi_{s, r}(y)  & = \cP_{r}  \psi (y) + \E^s \int_0^r  \cP_{r-\theta}  \big( b ( u_\theta)  \big)(y)  \, d \theta, 
\\
\phi^n_{s, r}(y)  & = \cP^n _{r}  \psi^n(y) + \E^s \int_0^r \cP_{\kappa_n(r-\theta)} ^n \big(  b  (u^n_{\kappa_n(\theta)})  \big) (y) \, d \theta.
\end{equs}
We aim to verify the conditions of Theorem \ref{thm:SSL}. We start by \eqref{eq:SSL-cond1}, that is, by obtaining an estimate for $\|A_{s,t}(x)\|_{L_p(\Omega)}$.
First of all, notice that we can interchange the action of $\E^s$ and $\cP_{t'-r}$, and therefore by
 \eqref{eq:OU-transition-main} and \eqref{eq:OUn-transition-main} one can write
\begin{equ}
A_{s,t}(x)=\int_s^t\cP_{t'-r}B_r(x)\,dr,
\end{equ}
with
\begin{equs}
B_r(y) : = \cP^\R_{Q(r-s)}f \big( \cP_{r-s}O_s(y)+\phi_{s, r} (y)\big) -  \cP^\R_{Q^n(r-s)}f \big( \widehat{O}^n_{s,r}(y)+\phi^n_{s, r}(y) \big).
\end{equs}
First consider the case $t\geq s+h$. We then have
\begin{equ}
A_{s,t}(x)=I_1(x)+I_2(x):=\Big(\int_s^{s+h}+\int_{s+h}^t\Big)\cP_{t-r}B_r(x)\,dr.
\end{equ}
For $I_1$ we have the trivial estimate
\begin{equ}\label{eq:regu-I1}
\|I_1(x)\|_{L_p(\Omega)}\lesssim   h  \lesssim  n^{-1/2} (t-s)^{3/4}.
\end{equ}
As for $I_2$,  by applying \eqref{eq:HK bound mixed}  we get
\begin{equs}
|B_r(y) | &
 \lesssim  \big(|  \cP_{r-s}O_s(y)+\phi_{s, r} (y)-  \widehat{O}^n_{s,r}(y)-\phi^n_{s, r}(y) )| 
 +| Q(r-s)-Q^n(r-s)|^{1/2}\big)
 \\
 &\qquad\times \big( Q(r-s) \wedge Q^n(r-s) \big)^{-1/2}                    \label{eq:x05}
\end{equs}
Next, using \eqref{eq:Estimate-BDG} with $\beta=1-2\eps$ gives,
\begin{equs}
\|\cP_{r-s}O_s(y)-\widehat{O}^n_{s,r}(y)\|_{L_p(\Omega)}&\lesssim n^{-1/2+\eps}|r-s|^{\eps/2} \lesssim n^{-1/2+\eps} . \label{eq:x01}
\end{equs}
By using 
 \eqref{eq:rate-deterministic} with $\alpha=1/2-\eps$,   and the assumption of the theorem, we get 
\begin{equs}
&\| \phi_{s, r}(y) - \phi^n_{s, r} (y) \|_{L_p(\Omega)} 
\\
 \leq &\| \cP^n_r \psi^n(y) - \cP_r \psi (y)\|_{L_p(\Omega)}+ \| \mathcal{E}^n_{0, \cdot}  \|_{\mathscr{C}^0_p[s', t']}
\\
 \leq & \| \cP^n_r \psi^n(y) - \cP_r \psi ^n(y) \|_{L_p(\Omega)}+  \| \cP_r \psi^n(y) - \cP_r \psi (y) \|_{L_p(\Omega)}+ \| \mathcal{E}^n_{0, \cdot}  \|_{\mathscr{C}^0_p[s', t']} 
\\
 \lesssim  & n^{-1/2+ \eps} \| \psi^n\|_{L_p(\Omega; C^{1/2-\eps}(\T))} +  \big(\cP_r \| \psi ( \cdot) -\psi^n(\cdot) \|_{L_p(\Omega)} \big) (y)  + \| \mathcal{E}^n_{0, \cdot}  \|_{\mathscr{C}^0_p[s', t']} 
 \\
 \lesssim  &  n^{-1/2+ \eps}  + \sup_{y \in \T}\| \psi (y)  -\psi^n (y)\|_{L_p(\Omega)}  + \| \mathcal{E}^n_{0, \cdot}  \|_{\mathscr{C}^0_p[s', t']} 
  \label{eq:x02}
\end{equs}
Moreover, by using  \eqref{eq:Q-difference} with $\beta=2-4\eps$ we get 
\begin{equs}
|Q(r-s)-Q^n(r-s)|^{1/2}&\lesssim n^{-1/2+\eps}|r-s|^{\eps/2} \lesssim n^{-1/2+\eps}.\label{eq:x03}
\end{equs}
We now combine \eqref{eq:x05} with \eqref{eq:x01}-\eqref{eq:x03}, and by keeping in mind  that $Q(r-s)\gtrsim |r-s|^{1/2}$  for all $r,s\in[s',t']_<$, and that by \eqref{eq:Q-lower-bound} we also have  $Q^n(r-s)\gtrsim |r-s|^{1/2}$  for $r \geq s+h$, we conclude that 
\begin{equs}
\sup_{y \in \T} \| B_r(y) \|_{L_p(\Omega)} \lesssim  |r-s|^{-1/4} \big( n^{-1/2+\eps}+\sup_{y \in \T}\| \psi (y)  -\psi^n (y)\|_{L_p(\Omega)}+\| \mathcal{E}^n_{0, \cdot}  \|_{\mathscr{C}^0_p[s', t']} \big).
\end{equs}
This in turn implies that 
\begin{equs}
\|I_2(x)\|_{L_p(\Omega)}\lesssim
 |t-s|^{3/4} \big( n^{-1/2+\eps}+\sup_{y \in \T}\| \psi (y)  -\psi^n (y)\|_{L_p(\Omega)}+\| \mathcal{E}^n_{0, \cdot}  \|_{\mathscr{C}^0_p[s', t']} \big).\label{eq:regu-I2}
\end{equs}
Hence, in the regime $t\geq s+h$ we have from \eqref{eq:regu-I1} and \eqref{eq:regu-I2} that
\begin{equs}         
\|  A_{s,t}(x)\|_{L_p(\Omega)}  \lesssim
|t-s|^{3/4} \big( n^{-1/2+\eps}+\sup_{y \in \T}\| \psi (y)  -\psi^n (y)\|_{L_p(\Omega)}+\| \mathcal{E}^n_{0, \cdot}  \|_{\mathscr{C}^0_p[s', t']} \big).
\\
 \label{eq:estimate-A-1/2}
\end{equs}
If $t \in [s, s+h)$, we can simply use a trivial bound: 
\begin{equs}
\|  A_{s,t}(x)\|_{L_p(\Omega)}   \leq 2 |t-s| \lesssim n^{-1/2}  |t-s|^{3/4}. 
\end{equs}
We conclude that
\eqref{eq:SSL-cond1} is satisfied with 
$$
C_1= N \big( n^{-1/2+\eps}+\sup_{y \in \T}\| \psi (y)  -\psi^n (y)\|_{L_p(\Omega)}+\| \mathcal{E}^n_{0, \cdot}  \|_{\mathscr{C}^0_p[s', t']} \big)
$$
and $\eps_1=1/4$. 

Next,
let us bound the term 
$\|\E^s \delta  A_{s,u,t}(x)\|_{L_p(\Omega)}$.  A simple calculation shows that 
\begin{equs}
\E^s \delta A_{s,u,t}(x) =& \E^s \E^u \int_u^t  \Big(  \cP_{t'-r}  \big( f  ( O_r+ \phi_{s, r}) \big) -\cP_{t'-r}  \big( f  ( O^n_r+ \phi^n_{s, r}) \big)  \Big) (x) \, dr 
\\
&- \E^s \E^u \int_u^t \Big(  \cP_{t'-r} \big(  f  ( O_r+ \phi_{u, r}) \big) -\cP_{t'-r}  \big( f ( O^n_r+ \phi^n_{u, r}) \big) \Big)(x)\, dr . 
\end{equs}
Similarly to before, we write
\begin{equs}  
\E^s \delta  A_{s,u,t}(x)   = \E^s  \int_u^t \cP_{t'-r}  D_r (x)\, dr ,
 \label{eq:delta-integrand}
\end{equs}
with
\begin{equs}
D_r(y) &:= \cP^\R_{Q(r-u)}  f ( \cP_{r-u}O_u (y) + \phi_{s, r} (y)    \big) -\cP^\R_{Q^n(r-u)}  f\big( \widehat{O}^n_{u,r}(y) + \phi^n_{s, r}(y)   \big)
\\
&\qquad- \cP^\R_{Q(r-u)}  f \big( \cP_{r-u}O_u(y) + \phi_{u, r}(y) \big) +\cP^\R_{Q^n(r-u)}   f \big( \widehat{O}^n_{u,r}(y)+ \phi^n_{u, r}(y)\big) .
\end{equs}
Let us start by a rough estimate when $|r-u|\leq h$.
We pair up the first and third, and the second and fourth terms in $D_r(y)$ and apply \eqref{eq:HK bound mixed} (with $\beta=1$ and $\alpha=0$).  This combined with      \eqref{eq:Q-easy-bound-first} and  \eqref{eq:Qn-small-times} gives
\begin{equs}
|D_r(y)| & \lesssim   |Q(r-u)|^{-1/2} | \phi_{s, r}(y)-\phi_{u, r}(y)|+ |Q^n(r-u)|^{-1/2} | \phi^n_{s, r}(y)-\phi^n_{u, r}(y)| 
\\
& \lesssim  |r-u|^{-1/4}  | \phi_{s, r}(y)-\phi_{u, r}(y)|+ n^{-1/2} |r-u|^{-1/2}| \phi^n_{s, r}(y)-\phi^n_{u, r}(y)|. 
\end{equs}
Notice that  for all  $X, Y \in L_\infty(\Omega)$ with $Y$ being $\cF_s$-measurable,  by the triangle inequality, conditional Jensen's inequality, and the monotonicity of the conditional expectation, we have $\| \E^s X-X \|_{L_\infty(\Omega)} \leq 2 \| Y-X\|_{L_\infty(\Omega)}$.  By using this, we see that 
\begin{equs}
\|\phi_{s, r}(y)- \phi_{u, r} (y)\|_{L_\infty(\Omega)} & =  \Big\| \E^s\E^u \int_0^r \cP_{r-\theta} \big( b ( u_\theta)\big) (y)  \, d \theta- \E^u \int_0^r \cP_{r-\theta}  \big( b ( u_\theta)  \big)  (y)\, d \theta \Big\|_{L_\infty(\Omega)}
\\
&\leq     2\Big\| \E^u  \int_0^s \cP_{r-\theta} \big( b ( u_\theta) \big) (y)  \, d \theta- \E^u \int_0^r  \cP_{r-\theta}\big( b  (u_\theta ) \big)(y)  \, d \theta \Big\|_{L_\infty(\Omega)}
\\
&\lesssim    |r-s|.  \label{eq:phi-t-s}
\end{equs}
Similarly, we get 
\begin{equs}       \label{eq:phi-n-t-s}
\|\phi^n_{s, r}(y)- \phi^n_{u, r}(y) \|_{L_\infty(\Omega)}  \lesssim |r-s|.
\end{equs}
Therefore
\begin{equs}\label{eq:D-easy}
\sup_{y \in \T} \|D_r(y) \|_{L_p(\Omega)} \lesssim \Big(  (r-u)^{-1/4} (r-s) + n^{-1/2} (r-u)^{-1/2}(r-s)\Big).
\end{equs}
Let us now first deal with the case $t \in [ u, u+h)$.
Putting the above bound into \eqref{eq:delta-integrand} we get
\begin{equs}
 \|\E^s \delta A_{s,u,t}(x)\|_{L_p(\Omega)}   &  \leq \int_u^t \sup_{y  \in {\T}}\|D_r(y) \|_{L_p(\Omega)} \, dr  
\\
 & \lesssim  \Big( (t-u)^{3/4} (t-s)+ n^{-1/2} (t-s)^{3/2} \Big) 
 \\
&  \lesssim n^{-1/2} (t-s)^{3/2}.\label{eq:regu-delta-easy}
\end{equs}
Moving on to the case $t\geq u+h$, we write
\begin{equs}
\|\E^s \delta  A_{s,u,t}(x)\|_{L_p(\Omega)}   & \leq
\|I_1\|_{L_p(\Omega)}+\|I_2\|_{L_p(\Omega)}:=\Big(\int_u^{u+h}+\int_{u+h}^t\Big)\sup_{y \in {\T}}  \|D_r(y) \|_{L_p(\Omega)} \, dr. 
\end{equs}
For $I_1$ we may use \eqref{eq:D-easy} again to get
\begin{equ}
\|I_1\|_{L_p(\Omega)}\lesssim n^{-3/2}|t-s|\lesssim n^{-1/2}|t-s|^{3/2}.
\end{equ}
As for $I_2$, we decompose the integrand as $D_r=D^1_r+D^2_r$, where 
\begin{equs}
D^1_r(y):&= \cP^\R_{Q(r-u)}   f \big( \widehat{O}^n_{u,r}(y)+ \phi^n_{s, r}(y)\big)  -\cP^\R_{Q(r-u)}   f\big( \widehat{O}^n_{u,r}(y) + \phi^n_{u, r}(y) \big) 
\\
&\qquad-\cP^\R_{Q^n(r-u)}  f \big( \widehat{O}^n_{u,r}(y) + \phi^n_{s, r}(y)  \big) +\cP^\R_{Q^n(r-u)}  f \big(\widehat{O}^n_{u,r}(y) + \phi^n_{u, r} (y) \big);
\\
D^2_r(y)  :&= \cP^\R_{Q(r-u)} f  \big(  \cP_{r-u}O_u(y)+ \phi_{s, r}(y)\big)  -\cP^\R_{Q(r-u)}  f \big(  \widehat{O}^n_{u,r}(y) + \phi^n_{s, r}(y) \big) 
\\
&\qquad- \cP^\R_{Q(r-u)}   f \big(  \cP_{r-u}O_u(y) + \phi_{u, r}(y)  \big) +\cP^\R_{Q(r-u)}  f \big(  \widehat{O}^n_{u,r}(y) + \phi^n_{u, r}(y)\big)  
\end{equs}
For $D^1_r$ we use \eqref{eq:HK bound 4} to obtain
\begin{equs}
|D^1_r(y) | &\lesssim \big(  Q(r-u) \wedge Q^n(r-u) \big) ^{-3/2} |Q(r-u)-Q^n(r-u) | |\phi^n_{s, r}(y) - \phi^n_{u, r}(y) | 
\\
&\lesssim |r-u|^{-1/2}  n ^{-1/2}|\phi^n_{s, r}(y) - \phi^n_{u, r}(y) | ,
\end{equs}
where for the second inequality we have used  \eqref{eq:Q-easy-bound-first}, \eqref{eq:Q-lower-bound},  and \eqref{eq:Q-difference} (the latter with $\beta=1$). Consequently, by \eqref{eq:phi-n-t-s}, we get 
\begin{equs}   \label{eq:estimate-A-1}
\sup_{y \in \T}\|D^1_r(y) \|_{L_p(\Omega)} \lesssim |r-u|^{-1/2}  n ^{-1/2}|t-s|.
\end{equs}
For $D^2_r$ we use \eqref{eq:HK bound 3}  with $\alpha=0$, to get 
\begin{equs}
|D^2_r(y) | &\lesssim |Q(r-u)|^{-1}   |\cP_{r-u}O_u(y) + \phi_{s, r} (y) -\widehat{O}^n_{u,r}(y) - \phi^n_{s, r} (y)  ||\phi_{s, r}(y)  - \phi_{u, r}(y) | 
\\
&\qquad+ |Q(r-u)|^{-1/2}   |  \phi_{s, r}(y)  - \phi^n_{s, r}(y) -   \phi_{u, r} (y) +  \phi^n_{u, r}(y) |  .        \label{eq:A2-dec}
\end{equs}     
From \eqref{eq:x01}, \eqref{eq:x02}  and \eqref{eq:phi-t-s} we get that
\begin{equs}
 \Big\| &|\cP_{r-u}O_u(y) + \phi_{s, r} (y) -\widehat{O}^n_{u,r}(y) - \phi^n_{s, r}(y)   ||\phi_{s, r}(y)  - \phi_{u, r}(y) |  \Big\|_{L_p(\Omega)}  
 \\
&\qquad \lesssim \big( n^{-1/2+\eps} + \sup_{x \in \T}\| \psi (x)  -\psi^n (x)\|_{L_p(\Omega)}+ \| \mathcal{E}^n_{0, \cdot}  \|_{\mathscr{C}^0_p[s', t']} \| \big) |r-s|.      \label{eq:part-1-A2}
\end{equs}
Moreover, similarly to the argument for \eqref{eq:phi-t-s}, we get 
\begin{equs}
 \|  &\phi_{s, r}(y)  - \phi^n_{s, r}(y) -   \phi_{u, r} (y) +  \phi^n_{u, r}(y)  \| _{L_p(\Omega)} 
\\
&\leq 2 \Big\|  \E^u  \int_s^r \cP_{r-\theta} \big( b ( u_\theta)   \big)  (y) \, d \theta- \E^u \int_s^r \cP^n_{\kappa_n(r-\theta)}\big(  b  (u^n_{\kappa_n(\theta)})  \big)(y) \, d \theta \Big\|_{L_p(\Omega)}
\\
&\leq 2 \| \mathcal{E}^n_{s,r} (y) \|_{L_p(\Omega)}
\end{equs}
Therefore,  by the above estimates and \eqref{eq:Q-easy-bound-first}, we get that
\begin{equs}\label{eq:x04}
& \sup_{y\in\T}\|D_r^2(y)\|_{L_p(\Omega)}
\\
\lesssim & \big(n^{-1/2+\eps} +\sup_{y \in \T}\| \psi (y)  -\psi^n (y)\|_{L_p(\Omega)}+ \| \mathcal{E}^n_{0, \cdot}  \|_{\mathscr{C}^0_p[s', t']} +[ \mathcal{E}^n ]_{\mathscr{C}^\tau_p[s', t']_<}\big)|r-s|^{-1/4+\tau},
\end{equs}
where we used that $\tau<3/4$. Integrating the bounds \eqref{eq:estimate-A-1} and \eqref{eq:x04} with respect to $r$,
%
we conclude that 
\begin{equs}
&  \|\E^s \delta  A_{s,u,t}(x)\|_{L_p(\Omega)} 
\\ \lesssim &     \Big( n^{-1/2+\eps} +\sup_{y \in \T}\| \psi (y)  -\psi^n (y)\|_{L_p(\Omega)} + \| \mathcal{E}^n_{0, \cdot}  \|_{\mathscr{C}^0_p[s', t']} +  [ \mathcal{E}^n ]_{\mathscr{C}^\tau_p[s', t']_<}\Big) (t-s)^{3/4+\tau}  .
\end{equs}
This shows that \eqref{eq:SSL-cond2} is satisfied with 
$$
C_2= N   \Big( n^{-1/2+\eps} + \sup_{y \in \T}\| \psi (y)  -\psi^n (y)\|_{L_p(\Omega)}+ \| \mathcal{E}^n_{0, \cdot}  \|_{\mathscr{C}^0_p[s', t']} +  [ \mathcal{E}^n ]_{\mathscr{C}^\tau_p[s', t']_<}\Big) 
$$
 and $\eps_2= 3/4+\tau -1 >0$, where we used that $\tau>1/4$. Therefore Theorem \ref{thm:SSL} applies.

We claim that the map $\cA : [s',t'] \to L_p(\Omega)$  constructed in Theorem \ref{thm:SSL} coincides with 
\begin{equs}
\cA_t(x) := \int_{s'}^t \Big( \cP_{t'-r}  \big( f  (u_r)\big)  -\cP_{t'-r} \big( f  (u^n_r) \big )  \Big) (x)\,dr, \qquad t \in [s', t'].
\end{equs}
 First of all, it is obvious that  $\cA_t(x)$  is $\cF_t$-measurable for each $t \in [s',t']$ and that $\cA_{s'}=0$.
 Hence, we only have to check that $\cA_\cdot(x)$ satisfies \eqref{eq:SSL-conc1}-\eqref{eq:SSL-conc2} with some constants $K_1$ and $K_2$. Notice that \eqref{eq:SSL-conc1} trivially holds with $K_1=2\|f\|_{\bB}$.
Concerning \eqref{eq:SSL-conc2}, we have by a simple application of the conditional Jensen and triangle inequalities
\begin{equs}
\|\E^s &\big( \cA_t(x)-\cA_s(x) -A_{s,t}(x) \big) \|_{L_p(\Omega)}
\\
&\leq \int_s^t\|\cP_{t'-r}\big(f(u_r)-f(O_r+\phi_{s,r}\big)(x)\|_{L_p(\Omega)}\,dr
\\
&\qquad+\int_s^t\|\cP_{t'-r}\big(f(u^n_r)-f(O^n_r+\phi^n_{s,r}\big)(x)\|_{L_p(\Omega)}\,dr.
\end{equs}
Since 
\begin{equs}
u_r-(O_r+\phi_{s,r})&=\int_s^r\cP_{r-\theta}\big(b(u_\theta)-\E^s b(u_\theta)\big)\,d\theta,\\
u_r^n-(O_r^n+\phi^n_{s,r})&=\int_s^r\cP_{\kappa_n(r-\theta)}\big(b(u^n_{\kappa_n(\theta)})-\E^s b(u^n_{\kappa_n(\theta)})\big)\,d\theta,
\end{equs}
it is then clear that \eqref{eq:SSL-conc2} is satisfied with $K_2=4\|f\|_{\cC^1(\R)}\|b\|_{\bB(\R)}$. Consequently, from \eqref{eq:SSL-conc3},  we obtain for all $(s, t) \in [s', t']_<$
\begin{equs}
\| \cA_t(x)-\cA_s(x)\|_{L_p(\Omega)} &  \leq N |t-s|^{3/4} \big( n^{-1/2+ \eps}+\sup_{y \in \T}\| \psi (y)  -\psi^n (y)\|_{L_p(\Omega)}+\| \mathcal{E}^n_{0, \cdot}  \|_{\mathscr{C}^0_p[s', t']} \big)    
\\
& +  N |t-s|^{3/4+\tau}    [ \mathcal{E}^n ]_{\mathscr{C}^\tau_p[s', t']_<}. 
\end{equs}
This, combined with the fact that $\cA_{t'}(x)- \cA_{s'}(x)= \cE^{n,1}_{s',t'}[f] (x)$ and that the constant $N$ is independent of $x \in \mathbb{T}$, leads to \eqref{eq:reg bound} with $s=s'$ and $t=t'$.  Since, $s', t'$ where arbitrary,  the general case of $(s, t) \in [s', t']_<$ also follows from this, since the (semi)-norms on the right hand side of \eqref{eq:reg bound} are non-decreasing functions of the time domain.

  It only remains to remove the additional Lipschitz assumption on $f$. To this end take a smooth approximation of $f$, for instance $f_m:=\cP_{1/m}^\R f$. Then $f_m\to f$ almost everywhere and $\|f_m\|_{\bB(\R)}\leq\|f\|_{\bB(\R)}$. From Girsanov's theorem (see e.g. \cite[Thm~10.14]{DPZ} for a sufficiently general version) we have that for all $x \in \T$,  the law of $u_r(x)$ and that of $\cP_r\psi(x)+O_r(x)$ are mutually absolutely continuous, and therefore for $r>0$, $x\in\T$, the law of $u_r(x)$ is absolutely continuous with respect to the Lebesque measure.
The same holds for $u^n_r(x)$. Therefore, for fixed $s,t,x,n$, $\cE^{n,1}_{s,t}[f_m](x)\to\cE^{n,1}_{s,t}[f](x)$ almost surely, and so an application of Fatou's lemma finishes the proof.  
\end{proof}

\subsection{Estimate for $\cE^{n,2 }$}
The purpose of this section is to provide the estimate for the term $\cE^{n,2}_{s,t}$ in the decomposition \eqref{eq:main-error-decomposition}.
We set 
\begin{equs}
v^n_t(x)= \cP_t^{n}  \psi^n(x) + O^n_t(x).
\end{equs}

\begin{lemma}\label{lem:integral}
Under the assumption of Theorem \ref{thm:main-theorem}, for any  $p>0$ there exists a constant  $N=N(p, \eps,c, K)$  such that  for all $g\in \mathbb{B}(\R)$, and all  $0\leq s < t\leq 1$, $n\in\N$, one has the bound
\begin{equs}
\sup_{x\in \T}\Big\|\int_s^t & p_{t-r}\ast\big( g(v^n_r)\big)(x)-p_{t-r}\ast_n\big(g(v^n_{\kappa_n(r)})\big)(x)\,dr\Big\|_{L_p(\Omega)}
\\
&\leq N\|g\|_{\mathbb{B}(\R)}n^{-1+3\eps}|t-s|^{1/2+\eps/2}.
\label{eq:names}
\end{equs}
\end{lemma}

\begin{proof}
It clearly suffices to prove the claim  for $p\geq 2$ and $\|g\|_{\mathbb{B}(\R)}=1$.  Let us fix $x \in \T$ and  $(s', t') \in [0, 1]_{<}$.  For $(s,t)\in[s',t']_<$ we define  
\begin{equ}
A_{s,t}(x)=\E^s\int_s^t\int_{\T}p_{t'-r}(x-y)\big( g(v^n_r(y))
-g(v^n_{\kappa_n(r)}(\rho_n(y))\big)\,dy\,dr.
\end{equ}
We aim to verify the conditions of the stochastic sewing lemma.
By the tower property of conditional expectation, it is easy to see that
\begin{equ}
\E^s\delta A_{s,u,t}(x)=0.
\end{equ}
This shows that \eqref{eq:SSL-cond2} is satisfied with $C_2=0$. 

Moving on to \eqref{eq:SSL-cond1}, we separate two cases. When $t\geq s+2h$, we write
\begin{equs}
A_{s,t}(x)&=I_1+I_2
:=\Big(\int_s^{s+2h}+\int_{s+2h}^t\Big)
\int_{\T}p_{t-r}(x-y)\E^s\big( g(v^n_r(y))
-g(v^n_{\kappa_n(r)}(\rho_n(y))\big) \,dy\,dr.
\end{equs}
For $r \in [s+2h, t]$ we have $\kappa_n(r)\geq s$. Therefore, we have by \eqref{eq:OUn-transition-main}
\begin{equs}  
J_{r,s}(y)&:=\E^s\big( g(v^n_r(y))
-g(v^n_{\kappa_n(r)}(\rho_n(y))\big)
\\
&=
\cP_{Q^n(r-s)}^\R
g\big( \cP_{r}^{n} \psi^n(y)+ \widehat{O}^n_{s,r}(y)  \big)
 -\cP_{Q^n(\kappa _n(r)-s)}^\R
g\big(\cP_{\kappa_n(r)}^{n} \psi^n (\rho_n(y)) + \widehat{O}^n_{s,\kappa_n(r)}(\rho_n(y))\big).
\end{equs}
Applying \eqref{eq:HK bound mixed} for the outer heat kernels with $\alpha=0$ and $\beta=1$  we have
\begin{equs}
|J_{r,s}(y)| & \lesssim \Big( |\cP_{r }^{n} \psi^n (y)- 
\cP_{\kappa_n(r)}^{n} \psi^n (\rho_n(y))|
+ \big|\widehat{O}^n_{s,r}(y)-\widehat{O}^n_{s,\kappa_n(r)}(\rho_n(y))\big|
\\
& \qquad +|Q^n(r-s)-Q^n(\kappa_n(r)-s)|^{1/2}
\Big)|Q^n(\kappa_n(r)-s)|^{-1/2}.     \label{eq:I-first}
\end{equs}
For the first term we apply Lemma \ref{lem:terms-from-IC} (with $\beta=1$, $\alpha=1/2-\eps$) to get that 
\begin{equs}        
|\cP_{r }^{n} \psi^n(y)- 
\cP_{\kappa_n(r)}^{n} \psi^n(\rho_n(y))| & \lesssim  n^{-1+\eps} r^{-1/4-\eps/2}. \label{eq:bound-for-IC-terms}
\end{equs} 
For the second term on the right hand side of \eqref{eq:I-first}we first write
\begin{equs}
\widehat{O}^n_{s,r}(y)-\widehat{O}^n_{s,\kappa_n(r)}(\rho_n(y))&=\big(\widehat{O}^n_{s,r}(y)-\cP_{r-s}O_s(y)\big)+\big(\cP_{\kappa_n(r)-s}O_s(\rho_n(y))-\widehat{O}^n_{s,\kappa_n(r)}(\rho_n(y))\big)\\
&\qquad + \big(\cP_{r-s}O_s(y)-\cP_{\kappa_n(r)-s}O_s(\rho_n(y))\big).
\end{equs}
The $L_p(\Omega)$ norms of the first two terms are readily bounded by \eqref{eq:Estimate-BDG} (with $\beta=2$).
As for the third,
 by \eqref{eq:HK bound mixed} (with $\beta=1$ and $\alpha=1/2-\eps$) and Proposition \ref{prop:OU Holder} we obtain 
\begin{equs}
\|\cP_{r-s} &  O_s(y)-\cP_{\kappa_n(r)-s} O_s(\rho_n(y))\|_{L_p(\Omega)} 
\\
&\lesssim    \|O_s\|_{L_p(\Omega; C^{1/2-\eps}) 
} \big(|y-\rho_n(y)| +|r-\kappa_n(r)|^{1/2}\big)|\kappa_n(r)-s|^{-1/4-\eps/2}
\\
&\lesssim    n^{-1 }|r-s|^{-1/4-\eps/2},
\end{equs}
where in the last line we used that $|r-s|\lesssim |\kappa_n(r)-s|$ for $r\in[s+2h,t]$.
Consequently, we have
\begin{equs}       
\| \widehat{O}^n_{s,r}(y)-\widehat{O}^n_{s,\kappa_n(r)}(\rho_n(y))\|_{L_p(\Omega)} \lesssim  n^{-1 }|r-s|^{-1/4-\eps/2}.
  \label{eq:P^n_r-P^n-kappa}
\end{equs}
Next, by \eqref{eq:Q-upper-bound} (with $\beta=1$) we have 
\begin{equs}     \label{eq:IDK}
|Q_n(r-s)-Q_n(\kappa_n(r)-s)|^{1/2} \lesssim  n ^{-1}|r-s|^{-1/4}.
\end{equs}
Finally, by \eqref{eq:Q-lower-bound} we have
\begin{equ}\label{eq:IDK2}
|Q_n(\kappa_n(r)-s)|^{-1/2}\lesssim |r-s|^{-1/4}.
\end{equ}
Substituting the bounds \eqref{eq:bound-for-IC-terms}-\eqref{eq:P^n_r-P^n-kappa}-\eqref{eq:IDK}-\eqref{eq:IDK2} into \eqref{eq:I-first},
we get 
\begin{equs}
\|J_{r,s}(y)\|_{L_p(\Omega)}  \lesssim n^{-1+\eps} |r-s|^{-1/2-\eps/2}\lesssim n^{-1+3\eps}|r-s|^{-1/2+\eps/2}.
\end{equs}
It remains to integrate with respect to $y$ and $r$ to get
\begin{equs}
\| I_2\|_{L_p(\Omega)} \lesssim  n^{-1+3\eps }  |t-s|^{1/2+\eps/2}.
\end{equs}
The term $I_1$ is trivial: by using  the  boundedness of $g$ we get
\begin{equs}
\| I_1\|_{L_p(\Omega)} \lesssim h \lesssim  n^{-1+3\eps} |t-s|^{1/2+3\eps/2} \lesssim  n^{-1+3\eps} |t-s|^{1/2+\eps/2}.
\end{equs}
Consequently, for $ t\geq s+2h$ we have shown that 
\begin{equs}\label{eq:quadribadri}
\|A_{s, t}(x) \|_{L_p(\Omega)} \lesssim  n^{-1+3\eps} |t-s|^{1/2+\eps/2}.
\end{equs}
In addition, note that \eqref{eq:quadribadri} also holds in the $t\in[s,s+2h]$ case, since it follows from the trivial bound
$\|A_{s, t}(x) \|_{L_p(\Omega)} \lesssim |t-s|$.
We can conclude that \eqref{eq:SSL-cond1} is satisfied with $C_1=Nn^{-1+3\eps}$ and $\eps_1=\eps/2$. Therefore, Theorem \ref{thm:SSL} applies.

Let us now define$\cA(x) : [s', t'] \to  L_p(\Omega)$  by 
\begin{equs}
\cA_t(x):= \int_{s'}^t\int_{\T}p_{t'-r}(x-y)\big( g(O^n_r(y))
-g(O^n_{\kappa_n(r)}(\rho_n(y))\big)\,dy\,dr, \qquad t \in [s', t']. 
\end{equs}
It is obvious that $\cA_{s'}(x) =0$, and by the adaptedness of $O^n$ it follows that $\cA_t(x)$ is $\cF_t$-measurable  for all $t \in [s',t']$. Moreover, since $\|g\|_{\mathbb{B}(\R)}=1$, it is obvious that it
satisfies \eqref{eq:SSL-conc1} with $K_1=4$. In addition, by definition,  it satisfies \eqref{eq:SSL-conc2} with $K_2=0$. By the conclusion of Theorem \ref{thm:SSL},  it follows that it is  the unique process with these properties.  Moreover, by  \eqref{eq:SSL-conc3} we have that 
\begin{equs}
\, & \|  \int_{s'}^{t'}\int_{\T}p_{t'-r}(x-y)\big( g(O^n_r(y))
-g(O^n_{\kappa_n(r)}(\rho_n(y))\big)\,dy\,dr\|_{L_p(\Omega)}
\\
&\qquad =  \|\cA_{t'}(x)-\cA_{s'}(x)\|  \leq   N n^{-1+3\eps}|t'-s'|^{1/2+\eps/2}
\end{equs}
Notice that $s', t'$ where arbitrary,  as was $x \in \T$, hence, the claim follows. 
\end{proof}

\begin{corollary}      \label{col:quadrature}
Under the assumption of Theorem \ref{thm:main-theorem}, for any  $p>0$ there exists a constant  $N=N(p, \eps,c,\|b\|_{\bB(\R)}, K)$   such that for  all  $0\leq s\leq t\leq 1$, $n\in\N$, one has the bound
\begin{equs}
\sup_{x\in\T}\|\cE^{n,2}_{s,t}(x)\|_{L_p(\Omega)}
\leq N n^{-1+3\eps}|t-s|^{1/2+ \eps/2}.
\label{eq:quadrature}
\end{equs}
\end{corollary}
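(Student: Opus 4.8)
The plan is to deduce the corollary from Lemma~\ref{lem:integral} by a change of measure, exactly mirroring the Girsanov argument already used in the proof of Lemma~\ref{lem:uniform-C-1/2}. Lemma~\ref{lem:integral} provides the required estimate for the quantity built from the driftless process $v^n_t=\cP^n_t\psi^n+O^n_t$, whereas $\cE^{n,2}$ is the very same quantity built from the full approximation $u^n$. The bridge between the two is the observation that, for fixed $x$, $s$, $t$, the random variable $\cE^{n,2}_{s,t}(x)$ from \eqref{eq:main-error-decomposition} is a \emph{pathwise} (purely deterministic) functional $F[\,\cdot\,]$ of the trajectory $u^n$,
$$F[w]:=\int_s^t\Big(p_{t-r}\ast\big(b(w_r)\big)(x)-p_{t-r}\ast_n\big(b(w_{\kappa_n(r)})\big)(x)\Big)\,dr,$$
with no stochastic integral hidden inside it; hence its law is determined by the law of $u^n$ alone.

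First I would invoke Girsanov's theorem as in Lemma~\ref{lem:uniform-C-1/2}: under the measure $\tilde\bP$ with $d\tilde\bP/d\bP=\rho$ given by \eqref{eq:rad-nyk}, the law of $u^n$ under $\tilde\bP$ coincides with the law of $v^n$ under $\bP$. Therefore $\tilde\E\,|\cE^{n,2}_{s,t}(x)|^{2p}=\tilde\E\,|F[u^n]|^{2p}=\E\,|F[v^n]|^{2p}$, and the last expression is precisely what Lemma~\ref{lem:integral} controls, when applied with $g=b$ and with $2p$ in place of $p$. This gives
$$\big(\tilde\E\,|\cE^{n,2}_{s,t}(x)|^{2p}\big)^{1/2}\leq \big(N\|b\|_{\mathbb{B}(\R)}\,n^{-1+3\eps}\,|t-s|^{1/2+\eps/2}\big)^{p}.$$

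Next I would pass back to $\bP$. Writing $\E\,|\cE^{n,2}_{s,t}(x)|^{p}=\tilde\E\big(|\cE^{n,2}_{s,t}(x)|^{p}\rho^{-1}\big)$ and applying the Cauchy--Schwarz inequality yields
$$\E\,|\cE^{n,2}_{s,t}(x)|^{p}\leq\big(\tilde\E\,|\cE^{n,2}_{s,t}(x)|^{2p}\big)^{1/2}\,\big(\tilde\E\,\rho^{-2}\big)^{1/2}.$$
The first factor is bounded by the previous display, while $\tilde\E\,\rho^{-2}=\E\,\rho^{-1}\leq N(\|b\|_{\mathbb{B}(\R)})<\infty$ by the same elementary estimate quoted in Lemma~\ref{lem:uniform-C-1/2}. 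Taking $p$-th roots, absorbing all constants into a single $N=N(p,\eps,c,\|b\|_{\mathbb{B}(\R)},K)$, and noting that the bound is uniform in $x\in\T$, delivers \eqref{eq:quadrature}; the degenerate case $s=t$ is trivial since both sides vanish.

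This argument is routine once Lemma~\ref{lem:integral} is in hand. The only point deserving genuine care is the identification of $\cE^{n,2}$ as a true pathwise functional of $u^n$, which is what makes the Girsanov law-transfer applicable verbatim, together with the bookkeeping of the two separate appearances of $\|b\|_{\mathbb{B}(\R)}$---one through the choice $g=b$ in Lemma~\ref{lem:integral}, the other through the moment bound on $\rho^{-1}$. I anticipate no substantive obstacle.
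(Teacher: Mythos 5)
Your proof is correct and follows essentially the same route as the paper: both define $\cE^{n,2}_{s,t}(x)$ as a pathwise functional of the random field, transfer the law of $u^n$ under $\tilde\bP$ to that of $v^n$ under $\bP$ via Girsanov, apply Lemma~\ref{lem:integral} with $g=b$ and $2p$ in place of $p$, and close with Cauchy--Schwarz together with the moment bound $\tilde\E\rho^{-2}=\E\rho^{-1}<\infty$. No gaps to report.
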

\begin{proof}
Fix $p\in(0,\infty)$, $x\in\T$, $0\leq s\leq t\leq 1$, $n\in\N$.
For any random field $\big(Y_t(x)\big)_{(t,x)\in[0,1]\times \T}$ denote the random variable
\begin{equ}
h(Y)=\int_s^t  p_{t-r}\ast b(Y_r)(x)-p_{t-r}\ast_nb(Y_{\kappa_n(r)})(x)\,dr.
\end{equ}
From Girsanov's theorem, as in Lemma \ref{lem:uniform-C-1/2}, 
we have
\begin{equs}
\E|h(u^n)|^p=\tilde\E\big(|h(u^n)|^p\rho^{-1}\big)&\leq \big(\tilde \E|h(u^n)|^{2p}\big)^{1/2}\big(\tilde\E\rho^{-2}\big)^{1/2}
\\
&=\big(\E|h(v^n)|^{2p}\big)^{1/2}\big(\E\rho^{-1}\big)^{1/2}
\\&\lesssim \big(n^{-1+3\eps}|t-s|^{1/2+\eps/2}\big)^p,
\end{equs}
where $\tilde \E$ denotes the expectation under the measure $\tilde\bP$ defined in \eqref{eq:rad-nyk} and $\rho := d\bP/ d \tilde \bP$.  
 To get the last line we used Lemma \ref{lem:integral} with $2p$ in place of $p$ and $b$ in place of $g$.
\end{proof}

\subsection{Proof of Theorem \ref{thm:main-theorem}}
As indicated in Section \ref{sec:outline}, we first aim to derive a buckling inequality for $\cE^n$.
In the decomposition \eqref{eq:main-error-decomposition} the only term not treated so far is $\cE^{n,3}$, for which however it is easy to see the almost sure bound
\begin{equ}\label{eq:E3}
\sup_{x\in\T}\cE^{n,3}_{s,t}(x)\lesssim n^{-1/2}|t-s|^{1/2}.
\end{equ}
Indeed, when $|t-s|\leq h$, then simply using the boundedness of $b$ yields a bound of order $|t-s|$, which even implies a bound of order $n^{-1}|t-s|^{1/2}$. In the regime $|t-s|\geq h$ we split the integral into two as usual, and the trivial estimates
\begin{equs}
&\Big|\int_{t-h}^t \Big( p_{t-r} *_nb (u^n_{\kappa_n(r)}) -  p^n_{\kappa_n(t-r)} *_nb (u^n_{\kappa_n(r)}) \Big)  \, dr\Big|\lesssim n^{-2},
\\
&\Big|\int_{s}^{t-h} \Big( p_{t-r} *_nb (u^n_{\kappa_n(r)}) -  p^n_{\kappa_n(t-r)} *_nb (u^n_{\kappa_n(r)}) \Big)  \, dr\Big|\lesssim\int_{s}^{t-h}\|p_{t-r}-p^n_{\kappa_n(t-r)}\|_{L_1(\T)}\,dr
\end{equs}
indeed imply \eqref{eq:E3}, using \eqref{eq:Pn-P} (with $\beta=1$) to bound the last integral.

Fix $\tau=3/8$.
Denote briefly by $\tilde K$ the constant $N$ obtained from Lemma \ref{lem:uniform-C-1/2}. When we apply below Lemma \ref{lem:regularisation-lemma} and Corollary \ref{col:quadrature}, we do so with $\tilde K$ in place of $K$.
For a parameter $N_0\in(0,1]$ to be specified later
take $S\in\Lambda_n\cap[N_0/2,N_0]$, which is certainly possible for large enough $n$.
Putting together \eqref{eq:reg bound}, \eqref{eq:quadrature}, \eqref{eq:E3}, and the trivial inequality $\|\cE^n_{0,\cdot}\|_{\scC^0_p[0,S]}\leq[\cE^n]_{\scC^{3/8}_p[0,S]_<}$, we have for all $(s,t)\in[0,S]_<$
\begin{equ}\label{eq:y01}
\sup_{x\in\T}\|\cE^n_{s,t}\|_{L_p(\Omega)}\leq N_1\big(n^{-1/2+\eps}+\sup_{x \in \T}\| \psi (x)  -\psi^n (x)\|_{L_p(\Omega)}+[\cE^n]_{\scC^{3/8}_p[0,S]_<}\big)|t-s|^{1/2},
\end{equ}
where the constant $N_1$ does not depend on $n$ or $N_0$ (in our usual notation, $N_1\lesssim 1$).
Dividing by $|t-s|^{3/8}$ and taking supremum over $(s,t)\in[0,S]_<$, we get
\begin{equ}
\,[\cE^n]_{\scC^{3/8}_p[0,S]_<}\leq N_1\big(n^{-1/2+\eps}+
\sup_{x \in \T}\| \psi (x)  -\psi^n (x)\|_{L_p(\Omega)}+
[\cE^n]_{\scC^{3/8}_p[0,S]_<}\big)S^{1/8}.
\end{equ}
We now fix $N_0=(2N_1)^{-8}$. Since $S\leq N_0$, the inequality buckles and we get
\begin{equ}\label{eq:final-drift-bound}
\,[\cE^n]_{\scC^{3/8}_p[0,S]_<}\leq N_1 \big(n^{-1/2+\eps}+\sup_{x \in \T}\| \psi (x)  -\psi^n (x)\|_{L_p(\Omega)}\big).
\end{equ}
Returning to the main error, we have
\begin{equ}
u_t(x)-u^n_t(x)= \big(\cP_t\psi(x)-\cP_t\psi^n(x)\big)+\big(\cP_t\psi^n(x)-\cP_t^n\psi^n(x)\big) + \cE^n_{0,t}(x)+\big(O_t(x)-O^n_t(x)\big).
\end{equ}
These terms are bounded by trivially, \eqref{eq:rate-deterministic} (with $\alpha=1/2-\eps$), \eqref{eq:final-drift-bound}, and \eqref{eq:Estimate-BDG2} (with $\beta=1-2\eps$), respectively.
This gives the bound
\begin{equ}
\sup_{t,x\in[0,S]\times\T}\|u_t(x)-u^n_t(x)\|_{L_p(\Omega)}\leq N_2 \big(n^{-1/2+\eps}+\sup_{x \in \T}\| \psi (x)  -\psi^n (x)\|_{L_p(\Omega)}\big)
\end{equ}
with another constant $N_2\lesssim 1$.
Repeating the same argument on $[S,2S]$, with viewing $u_S$ and $u^n_S$ as initial conditions, we get
\begin{equs}
\sup_{t,x\in[S,2S]\times\T}\|u_t(x)-u^n_t(x)\|_{L_p(\Omega)}&\leq N_2 \big(n^{-1/2+\eps}+\sup_{x \in \T}\| u_S (x)  - u^n_S (x)\|_{L_p(\Omega)}\big)
\\
&\leq (N_2^2+N_2) \big( n^{-1/2+\eps}+\sup_{x \in \T}\| \psi (x)  -\psi^n (x)\|_{L_p(\Omega)}\big).
\end{equs}
By iterating the argument at most $2/N_0$ times and recalling that $N_0$ does not depend on $n$, the proof is finished.\qed

\bibliographystyle{Martin}
\bibliography{SPDE-bib} 

\end{document}